\newtheorem{theorem}{Theorem}[section]
\newtheorem{lemma}[theorem]{Lemma}
\newtheorem{proposition}[theorem]{Proposition}
\newtheorem{conjecture}[theorem]{Conjecture} 
\theoremstyle{definition}
\newtheorem{definition}[theorem]{Definition}
\newtheorem{example}[theorem]{Example}
\newtheorem{remark}[theorem]{Remark}
\newtheorem*{remark*}{Remark}
\newcommand{\ZZ}{\mathbb{Z}}
\newcommand{\PP}{\mathbb{P}}
\newcommand{\RR}{\mathbb{R}}
\newcommand{\G}{\mathbb{G}}
\newcommand{\te}{\widetilde{e}}
\newcommand{\thh}{\widetilde{h}}
\newcommand{\tv}{\widetilde{v}}
\newcommand{\tx}{\widetilde{x}}
\newcommand{\tF}{\widetilde{F}}
\newcommand{\tG}{\widetilde{G}}
\newcommand{\tX}{\widetilde{X}}
\newcommand{\tGa}{\widetilde{\Gamma}}
\newcommand{\Ga}{\Gamma}
\newcommand{\De}{\Delta}
\newcommand{\ph}{\varphi}
\DeclareMathOperator{\Ker}{Ker}
\DeclareMathOperator{\Ram}{Ram}
\DeclareMathOperator{\Hom}{Hom}
\DeclareMathOperator{\Vol}{Vol}
\DeclareMathOperator{\Vor}{Vor}
\DeclareMathOperator{\rk}{rk}
\newcommand{\calA}{\mathcal{A}}
\newcommand{\calB}{\mathcal{B}}
\newcommand{\calI}{\mathcal{I}}
\newcommand{\calM}{\mathcal{M}}
\newcommand{\calR}{\mathcal{R}}
\newcommand{\calT}{\mathcal{T}}
\newcommand{\ud}{\mathrm{ud}}	
\newcommand{\st}{\mathrm{st}}	
\DeclareMathOperator{\Aut}{Aut}
\DeclareMathOperator{\Pic}{Pic}
\DeclareMathOperator{\trop}{trop}
\DeclareMathOperator{\val}{val}
\DeclareMathOperator{\dval}{dval}
\DeclareMathOperator{\Id}{Id}
\DeclareMathOperator{\Div}{Div}
\DeclareMathOperator{\Prym}{Prym}
\DeclareMathOperator{\dil}{dil}
\DeclareMathOperator{\Jac}{Jac}
\title[The 2\textsuperscript{nd} moment of the tropical Prym]{The trigonal construction and the second moment of the tropical Prym variety}
\author{Dmitry Zakharov}
\address{Department of Mathematics, Central Michigan University, Mount Pleasant, MI 48859, USA}
\email{\href{mailto:dvzakharov@gmail.com}{dvzakharov@gmail.com}}
\begin{document}

\begin{abstract}

    We use the tropical trigonal construction to calculate the second moment of the tropical Prym variety of all double covers $\pi:\tGa\to \Ga$ of tropical curves of genus $g(\Ga)\leq 4$. The answer is expressed in terms of the signed graphic matroid of the double cover and consists a polynomial and piecewise-polynomial term. We relate the latter term, which does not occur in the analogous formula for the tropical Jacobian, to the problem of extending the Prym--Torelli map from the moduli space of admissible double covers to the second Voronoi compactification of the moduli space of principally polarized abelian varieties.


\end{abstract}

\maketitle
\setcounter{tocdepth}{1}
\tableofcontents

\section{Introduction}

Tropical geometry aims to find analogues of algebro-geometric objects in the world of polyhedral geometry, which record the combinatorial behavior of degenerations of the algebraic objects. There are two classes of algebraic objects for which this correspondence is especially well-understood: algebraic curves and abelian varieties. The tropical analogues of algebraic curves are metric graphs, which admit a theory of divisors, linear equivalence, and line bundles that closely follows the algebraic theory (see~\cite{MikhalkinZharkov}). A tropical principally polarized abelian variety (ppav) $X$ is a real torus determined by a free abelian group $\Lambda$ and an inner product on $\Lambda_{\RR}=\Lambda\otimes \RR$, and the theory of tropical ppavs is likewise quite similar to the algebraic theory. For example, the tropical Jacobian $\Jac(\Ga)$ of a metric graph $\Ga$ is a tropical ppav, and there is a natural Abel--Jacobi map $\Ga\to \Jac(\Ga)$ (see~\cite{baker2011metric}).

A tropical ppav is a Riemannian manifold, and a natural first question is to compute its volume. More generally, the \emph{$2n$-th moment} $I_{2n}(X)$ of a tropical ppav $X$ is the integral of $\|\cdot\|^{2n}$ over a canonical symmetric convex polyhedral domain $\Vor(X)\subset \Lambda_{\RR}$, called the \emph{Voronoi polytope}, and zeroth moment is the volume of $X$. The second moment of a tropical ppav has been extensively studied in information theory (see Chapter 21 of~\cite{2013ConwaySloane}), and was recently found to have arithmetic significance~\cite{2022deJongShokrieh}. For a generic tropical ppav $X$, finding $I_{2n}(X)$ (indeed, even finding the vertices of $\Vor(X)$) is a computationally hard problem (see the discussion in Section 1.2 of~\cite{2023deJongShokrieh}). Therefore one is interested in finding explicit formulas for $I_{2n}(X)$ for specific families of tropical ppavs. 

The zeroth moment $I_0(\Jac(\Ga))$, or volume, of the Jacobian of a metric graph $\Ga$ was found in~\cite{2014AnBakerKuperbergShokrieh}, and is equal (up to a multiple) to a polynomial in the edge lengths of $\Ga$ whose monomials correspond to the spanning trees of $\Ga$. A formula for the second moment $I_2(\Jac(\Ga))$ was found in~\cite{2023deJongShokrieh} (see Theorem 8.1). While this formula is computationally efficient for a fixed metric graph, we present our motivation for finding an alternative formula. The Jacobian variety $\Jac(\Ga)$ is closely related to the graphic matroid $\calM(\Ga)$, and in fact can be reconstructed from $\calM(\Ga)$ and the edge lengths (see~\cite{1982Gerritzen},~\cite{CaporasoViviani}, and~\cite{2011BrannettiMeloViviani}). It follows that it should possible to calculate all moments $I_{2n}(\Jac(\Ga))$ in terms of the matroid alone. The formula for $I_0(\Jac(\Ga))$ in~\cite{2014AnBakerKuperbergShokrieh} is a sum over the spanning trees of $\Ga$ and hence is explicitly matroidal, but the formula for $I_2(\Jac(\Ga))$ given in~\cite{2023deJongShokrieh} is not. As an intermediate result, in Proposition~\ref{prop:I2Jac} we use the results of the related paper~\cite{2023RichmanShokriehWu} to derive an explicitly matroidal formula for $I_2(\Jac(\Ga))$, which involves summation over the 2-spanning forests of $\Ga$.

The Prym variety $\Prym(\tX/X)$ of a connected \'etale double cover $p:\tX\to X$ of algebraic curves is a ppav of dimension $g(X)-1$. The analogous construction for double covers of metric graphs was described in~\cite{2018JensenLen} and~\cite{LenUlirsch}. The zeroth moment $I_0(\Prym(\tGa/\Ga))$ of the Prym variety of a double cover $\pi:\tGa\to \Ga$ of metric graphs was computed in~\cite{2022LenZakharov} in the case when $\pi$ is a free double cover, and in~\cite{2024GhoshZakharov} for all double covers. As in the case of Jacobians, $I_0(\Prym(\tGa/\Ga))$ is equal (up to a multiple) to a polynomial in the edge lengths of $\Ga$, with monomials indexed by the bases of a matroid, namely the \emph{signed graphic matroid} $\calM(\tGa/\Ga)$ originally introduced by Zaslavsky~\cite{1982Zaslavsky}. Unlike $\calM(\Ga)$, the bases of the matroid $\calM(\tGa/\Ga)$ come equipped with an index function, which appears as an auxiliary weighting in the corresponding terms of $I_0(\Prym(\tGa/\Ga))$. 

The exact relationship between $\calM(\tGa/\Ga)$ and $\Prym(\tGa/\Ga)$  was elucidated in~\cite{2023RoehrleZakharov}, where we showed that the latter can be reconstructed from the former, just as $\calM(\Ga)$ determines $\Jac(\Ga)$. It follows that there must exist a formula for $I_2(\Prym(\tGa/\Ga))$ in terms of the signed graphic matroid $\calM(\tGa/\Ga)$. The goal of this project is to find such a formula. 

In classical algebraic geometry, a beautiful construction of Recillas~\cite{1974Recillas} associates, to a connected \'etale double cover $\widetilde{X}\to X$ of a trigonal curve $X$, a tetragonal curve $Y$ in such a way that $\Prym(\widetilde{X}/X)=\Jac(Y)$. Under certain genericity assumptions on $Y$, this construction can be inverted. A tropical version of this construction was developed in~\cite{2022RoehrleZakharov}: given a free double cover of metric graphs $\pi:\tGa\to \Ga$, where $\Ga$ is a trigonal graph (in an appropriate sense), we construct a tetragonal metric graph $\Pi$ such that $\Prym(\tGa/\Ga)=\Jac(\Pi)$. By the results of~\cite{cools2018metric}, any metric graph $\Ga$ of genus $g(\Ga)\leq 4$ is trigonal. Therefore, we can compute $I_2(\Prym(\tGa/\Ga))$ for any free double cover with $g(\Ga)\leq 4$ by identifying $\Prym(\tGa/\Ga)$ with a tetragonal Jacobian $\Jac(\Pi)$, using the matroidal formula of Proposition~\ref{prop:I2Jac} to compute $I_2(\Jac(\Pi))$, and then expressing the result in terms of the signed graphic matroid $\calM(\tGa/\Ga)$. By edge contraction, this formula can then be extended to arbitrary double covers. 

The main result of this paper is an implementation of this computation in Sage. In Theorem~\ref{thm:main}, we give an explicit formula for $I_2(\Prym(\tGa/\Ga))$ in terms of the matroid $\calM(\tGa/\Ga)$ for all metric graphs $\Ga$ with $g(\Ga)\leq 4$. The formula consists of two terms: a polynomial term $p(\tGa/\Ga)$ that is the direct analogue of the polynomial formula for $I_2(\Jac(\Ga))$, and a piecewise-polynomial term $q(\tGa/\Ga)$ that has no analogue for Jacobians.

The piecewise-polynomiality of $I_2(\Prym(\tGa/\Ga))$ is directly related to the problem of resolving the indeterminancy of the Prym--Torelli map $\overline{t}_g:\overline{\mathcal{R}}_g\dashrightarrow \overline{\mathcal{A}}^V_{g-1}$ from the moduli space of admissible double covers of stable curves to the second Voronoi compactification of the moduli space of ppavs. In fact, our calculations can be used to describe an explicit sequence of toric blowups that fully resolves the indeterminancy for $g=4$. This blowup, however, is far from minimal, and in future work we plan to use our methods to find a minimal resolution of $\overline{t}_4$.

We emphasize that we do not actually compute the second moment of any tropical abelian variety $X$ that is not a Jacobian, in other words, one whose second moment is not already known by the results of~\cite{2023deJongShokrieh}. However, our results allow us to conjecture a matroidal formula for $I_2(\Prym(\tGa/\Ga))$ for certain families of double covers whose Pryms are not, in general, Jacobians.

\section{Setup}

In this section we recall a number of basic definitions about metric graphs, double covers, graph gonality, and tropical abelian varieties and their moments.

\subsection{Graphs and weighted graphs}

A \emph{graph} $G=(V(G),H(G),r,\iota)$ consists of a set of \emph{vertices} $V(G)$, a set of \emph{half-edges} $H(G)$, a \emph{root map} $r:H(G)\to V(G)$, and a fixed-point-free involution $\iota:H(G)\to H(G)$. An \emph{edge} $e=\{h,h'\}$ of $G$ is an orbit of $\iota$, and the set of edges is denoted $E(G)$. An edge is a loop if its root vertices coincide. We allow graphs with loops and with multiple edges between two vertices, and we consider only finite connected graphs.

The \emph{tangent space} to a vertex $v\in V(G)$ is the set $T_vG=r^{-1}(v)$ of half-edges rooted at $v$, and the \emph{valence} of $v\in V(G)$ is $\val(v)=|T_vG|$. A \emph{weighted graph} $(G,g)$ is a graph $G$ together with a function $g:V(G)\to \ZZ_{\geq 0}$ called the \emph{vertex genus}. The \emph{genus} of a weighted graph is
\[
g(G)=b_1(G)+\sum_{v\in V(G)}g(v)=|E(G)|-|V(G)|+1+\sum_{v\in V(G)}g(v).
\]
The \emph{Euler characteristic} of a vertex $v\in V(G)$ of a weighted graph $G$ is the quantity
\[
\chi(v)=2-2g(v)-\val(v),
\]
and the \emph{Euler characteristic} of the entire graph $G$ is
\[
\chi(G)=\sum_{v\in V(G)}\chi(v)=2-2g(G).
\]
A vertex $v\in V(G)$ is \emph{unstable} if $\chi(v)>0$, such a vertex is an extremal vertex of genus 0. Similarly, $v$ is \emph{semistable} if $\chi(v)=0$, in other words if $\val(v)=2$ and $g(v)=0$. A weighted graph $G$ is called \emph{stable} or \emph{semistable} if respectively $\chi(v)<0$ or $\chi(v)\leq 0$ for all $v\in V(G)$. For any weighted graph $G$ with $g(G)\geq 2$, we define the \emph{stabilization} $G^{\st}$ of $G$ by iteratively removing unstable vertices $v\in V(G)$ (in other words, deleting all extremal trees with no vertices of positive genus), and then removing all semistable vertices by joining the two incident edges into a single edge. 

\subsection{Harmonic morphisms and ramification}

A \emph{morphism} $f:\tG\to G$ of graphs is a pair of maps $f:V(\tG)\to V(G)$ and $f:H(\tG)\to H(G)$ commuting with the root and involution maps, and thus inducing a map $f:E(\tG)\to E(G)$ that preserves vertex adjacency (we do not consider morphisms that contract edges). A \emph{harmonic morphism} of graphs is a morphism $f:\tG\to G$ together with a \emph{local degree} function $d_f:V(\tG)\cup H(\tG)\to \ZZ_{>0}$ satisfying the following conditions: for any edge $\te=\{\thh,\thh'\}\in E(\tG)$ we have $d_f(\thh)=d_f(\thh')$  (a quantity we therefore denote $d_f(\te)$), and 
\[
d_f(\tv)=\sum_{\thh\in f^{-1}(h)\cap T_{\tv}\tG} d_f(\thh)
\]
for any $\tv\in V(\tG)$ and any $h\in T_{f(\tv)}G$. A harmonic morphism $f:\tG\to G$ to a connected target $G$ has a well-defined \emph{global degree} equal to
\begin{equation}
\label{eq:harmonicglobaldegree}    
\deg(f)=\sum_{\tv\in f^{-1}(v)}d_f(\tv)=\sum_{\thh\in f^{-1}(h)}d_f(\thh)=\sum_{\te\in f^{-1}(e)}d_f(\te)
\end{equation}
for any $v\in V(G)$, $h\in H(G)$, or $e\in E(G)$. 

Given a harmonic morphism $f:\tG\to G$ of weighted graphs, the \emph{ramification degree} of $f$ at a vertex $\tv\in V(\tG)$ is 
\begin{equation}
\label{eq:localRH}
\Ram_f(\tv)=d_f(\tv)\chi(f(\tv))-\chi(\tv).
\end{equation}
Adding the local ramification degrees over all $\tv\in V(\tG)$ and using~\eqref{eq:harmonicglobaldegree}, we obtain the \emph{global Riemann--Hurwitz formula}
\begin{equation}
\chi(\tG)=\deg (f)\chi(G)-\Ram(f),
\label{eq:globalRH}
\end{equation}
where 
\[
\Ram(f)=\sum_{\tv\in V(\tG)}\Ram_f(\tv).
\]
is the \emph{global ramification degree} of $f$.

We say that a harmonic morphism $f:\tG\to G$ is \emph{effective} if $\Ram_f(\tv)\geq 0$ for all $\tv\in V(\tG)$ and \emph{unramified} if $\Ram_f(\tv)=0$ for all $\tv\in V(\tG)$. It is elementary to verify that if $f:\tG\to G$ is an unramified harmonic morphism, then $\tG$ is semistable if and only if $G$ is semistable, and stable if and only if $G$ is stable. 

\subsection{Edge contractions} Let $G$ be a graph and let $F\subset E(G)$ be a set of edges. We define the \emph{contraction} $G_F$ of $G$ along $F$ as follows. Let $G[F]=G_1\sqcup\cdots\sqcup G_k$ be the connected components of the subgraph $G[F]\subseteq G$ spanned by $F$ (consisting of the edges in $F$ and their root vertices). We form $G_F$ by contracting each $G_i$ to a separate vertex $v_i$, so that
\[
V(G_F)=(V(G)\backslash V(G[F]))\cup\{v_1,\ldots,v_k\},\quad E(G_F)=E(G)\backslash F.
\]
If $G$ is a weighted graph with vertex genus function $g:V(G)\to \ZZ_{\geq 0}$, we define the vertex weight $g_F:V(G_F)\to \ZZ_{\geq 0}$ as follows:
\[
g_F(v)=\begin{cases} g(v), & v\in V(G)\backslash V(G[F]),\\
g(G_i), & v=v_i.
\end{cases}
\]
It is elementary to verify that $g(G_F)=g(G)$.

We can similarly contract a harmonic morphism $f:\tG\to G$ along a set of edges $F\subset E(G)$ of the target. Let $G[F]=G_1\sqcup\cdots\sqcup G_k$ be the connected components, let $\tF=f^{-1}(F)$, and let $\tG[\tF]=\coprod_{i,j} \tG_{ij}$ be the connected components of the subgraph of $\tG$ spanned by $\tF$, labeled in such a way that $f(\tG_{ij})=G_i$. We define the \emph{contraction $f_F:\tG_{\tF}\to G_F$ of $f$ along $F$} by setting $f(\tv_{ij})=v_i$, where $\tv_{ij}\in V(\tG_{\tF})$ and $v_i\in V(G_F)$ are the contracted vertices corresponding to $\tG_{ij}$ and $G_i$, respectively. It is elementary to verify that $f_F$ is harmonic and of the same global degree as $f$, provided that we set its local degrees to be
\[
d_{f_F}(\tv)=\begin{cases} d_f(\tv), & \tv\in V(\tG)\backslash V(\tG[\tF]), \\
\deg f|_{\tG_{ij}}, & \tv=\tv_{ij}.    
\end{cases}
\]
Furthermore, if $f$ is effective or unramified, then so is $f_F$.

\subsection{Metric graphs and tropical curves}

Let $G$ be a graph and let $\ell:E(G)\to \RR_{>0}$ be an assignment of \emph{lengths} to the edges of $G$. The pair $(G,\ell)$ defines a \emph{metric graph} $\Ga$, which is the topological space obtained by gluing intervals $[0,\ell(e)]$ for all $e\in E(G)$ according to how they are attached in $G$. We will often use $e$ to denote both an edge of a metric graph and its length. The pair $(G,\ell)$ is called a \emph{model} for $\Ga$, and different models for $\Ga$ may be obtained by either subdividing edges or removing valence 2 vertices. Any metric graph other than a circle has a unique \emph{minimal model}, having no vertices of valence 2. We will usually not distinguish a metric graph $\Ga$ from its models and use $V(\Ga)$ and $E(\Ga)$ to denote the set of vertices or edges of a suitably chosen model of $\Ga$.

A \emph{tropical curve} is a weighted metric graph, in other words a metric graph $\Ga$ together with a weight function $g:\Ga \to \ZZ_{\geq 0}$ that takes only finitely many nonzero values. When choosing a model for a tropical curve $\Ga$, we always assume that each point $x\in \Ga$ with $g(x)>0$ corresponds to a vertex, so that the model is a weighted graph. We define the Euler characteristic of a point $x\in \Ga$ on a tropical curve $\Ga$, and the genus and Euler characteristic of all of $\Ga$, using the underlying model. A tropical curve is called \emph{stable} if it has no extremal vertices of genus zero. 

A \emph{harmonic morphism} $f:\tGa\to \Ga$ of metric graphs or tropical curves is determined by a harmonic morphism of the underlying models, having the property that
\[
\ell(f(\te))=d_f(\te)\ell(\te)
\]
for all $\te\in E(\tGa)$. The map $f$ is linear on each edge $\tGa$ and dilates it by the positive integer factor $d_f(\te)$. The global degree of $f$ is its global degree as a harmonic morphism of graphs, and we similarly refer to effective and unramified harmonic morphisms of tropical curves.

We may contract edges of tropical curves by contracting the edges on the underlying models, and similarly we may contract harmonic morphisms of tropical curves. Intuitively, this corresponds to setting the lengths of the corresponding edges to zero, and increasing the vertex genus by one whenever a loop is contracted.

\subsection{Double covers} In this paper, we are primarily concerned with two types of harmonic morphisms: double covers and $n$-gonal structures, which are described in Section~\ref{subsec:gonality}.

A \emph{double cover} $\pi:\tGa\to \Ga$ of tropical curves is an unramified harmonic morphism of global degree two. A point $x\in \Ga$ is called \emph{dilated} if it has a unique preimage $\pi^{-1}(x)=\{\tx\}$ with $d_{\pi}(\tx)=2$ and \emph{free} if $\pi^{-1}(x)=\{\tx^+,\tx^-\}$ with $d_{\pi}(\tx^{\pm})=1$. The set of dilated points forms the  \emph{dilation subcurve} $\Ga_{\dil}\subset \Ga$. A double cover $\pi:\tGa\to \Ga$ is \emph{free} if $\Ga_{\dil}=\emptyset$, in other words, if $\pi$ is a covering isometry. We say that $\pi$ is \emph{dilated} if $\Ga_{\dil}\neq\emptyset$ and \emph{edge-free} of $\Ga_{\dil}$ consists of finitely many points. The \emph{dilation index} of a double cover is
\[
d(\tGa/\Ga)=\begin{cases} \mbox{number of connected components of }\Ga_{\dil},& \pi\mbox{ is dilated,}\\
1\mbox{ (not 0)}, & \pi\mbox{ is free}.
\end{cases}
\]
The \emph{torus rank} of a double cover $\pi:\tGa\to \Ga$ is
\[
t(\tGa/\Ga)=b_1(\tGa)-b_1(\Ga).
\]

\begin{example} The \emph{$FS_n$-double cover} $\pi:\tGa\to \Ga$ is shown on Figure~\ref{fig:FSn}. The two vertices of $\Ga$ are dilated, and no edges are dilated. The dilation index and torus rank are equal to $d(\tGa/\Ga)=2$ and $t(\tGa/\Ga)=n$, respectively. 
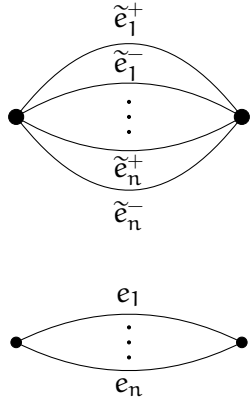
\begin{figure}
    \centering
    \begin{tikzpicture}

    \draw[fill] (0,0) circle(0.07);
    \draw[fill] (3,0) circle(0.07);
    \draw[thin] (0,0) .. controls (1,0.5) and (2,0.5) .. (3,0);
    \draw[thin] (0,0) .. controls (1,-0.5) and (2,-0.5) .. (3,0);
    \draw[fill] (1.5,0) circle (0.02);
    \draw[fill] (1.5,0.2) circle (0.02);
    \draw[fill] (1.5,-0.2) circle (0.02);
    \node at (1.5,0.6) {$e_1$};
    \node at (1.5,-0.6) {$e_n$};

\begin{scope}[xshift=0cm,yshift=3cm]

    \draw[fill] (0,0) circle(0.1);
    \draw[fill] (3,0) circle(0.1);
    \draw[thin] (0,0) .. controls (1,1.3) and (2,1.3) .. (3,0);
    \draw[thin] (0,0) .. controls (1,-1.3) and (2,-1.3) .. (3,0);
    \draw[thin] (0,0) .. controls (1,0.6) and (2,0.6) .. (3,0);
    \draw[thin] (0,0) .. controls (1,-0.6) and (2,-0.6) .. (3,0);
    \draw[fill] (1.5,0) circle (0.02);
    \draw[fill] (1.5,0.2) circle (0.02);
    \draw[fill] (1.5,-0.2) circle (0.02);
    \node at (1.5,1.3) {$\te^+_1$};
    \node at (1.5,-0.7) {$\te^+_n$};
    \node at (1.5,0.7) {$\te^-_1$};
    \node at (1.5,-1.3) {$\te^-_n$};
\end{scope}
    \end{tikzpicture}
    \caption{The $FS_n$-double cover}
    \label{fig:FSn}
\end{figure}
    
\end{example}

We make a number of elementary numerical observations about double covers.

\begin{lemma} Let $\pi:\tGa\to \Ga$ be a double cover and let $\Ga_1,\ldots,\Ga_d$ be the connected components of the dilation subcurve $\Ga_{\dil}$.

\begin{enumerate}
    \item $g(\tGa)=2g(\Ga)-1$. 
    \item Each $\Ga_i$ is semistable (see Lemma 5.4 in~\cite{2018JensenLen}).
    \item The torus rank satisfies
\[
t(\tGa/\Ga)\leq g(\Ga)-1,
\]
    with equality holding if and only if the following conditions hold:
    \begin{enumerate}
        \item $g(\Ga_i)=1$ for all $i$, in other words each $\Ga_i$ is either an isolated point of genus one or a simple cycle having no points of nonzero genus.
        \item $g(x)=0$ for all $x\notin \Ga_{\dil}$.
    \end{enumerate}
\end{enumerate}
\label{lem:torusrank}
\end{lemma}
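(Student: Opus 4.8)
My plan is to prove the three numerical statements about a double cover $\pi:\tGa\to \Ga$ essentially by applying the Riemann--Hurwitz formula~\eqref{eq:globalRH} and the local formula~\eqref{eq:localRH}, together with a careful local analysis at the dilated points.

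For part (1), since $\pi$ is unramified of global degree two, $\Ram(f)=0$ and $\chi(\tGa)=2\chi(\Ga)$ by~\eqref{eq:globalRH}. Translating via $\chi=2-2g$ gives $2-2g(\tGa)=2(2-2g(\Ga))$, hence $g(\tGa)=2g(\Ga)-1$. This is immediate.

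For part (2), I would invoke the cited Lemma 5.4 of~\cite{2018JensenLen}, but it is worth indicating the mechanism: at a point $\tx$ lying over a dilated point $x$ with $d_\pi(\tx)=2$, the condition $\Ram_\pi(\tx)=0$ reads $0 = 2\chi(x)-\chi(\tx) = 2(2-2g(x)-\val(x)) - (2-2g(\tx)-\val(\tx))$. Since $\pi$ restricted to the half-edges at $\tx$ is a bijection onto the half-edges at $x$ precisely when the incident edges are free, and is two-to-one onto dilated half-edges, the valences and genera are constrained so that the dilation subcurve $\Ga_\dil$ (being the locus where $\pi$ is a genuine degree-two cover, i.e. locally a connected double cover of an interval) pulls back to something forcing each component $\Ga_i$ to have $\chi(x)\le 0$ at all its points; the global statement is that each $\Ga_i$ is semistable. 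Rather than reprove this, I would simply cite~\cite{2018JensenLen} as the excerpt already does.

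For part (3), the torus rank is $t(\tGa/\Ga)=b_1(\tGa)-b_1(\Ga)$. Using part (1) and $b_1 = g - \sum_v g(v)$ for a weighted graph, write $b_1(\tGa) = g(\tGa) - \sum_{\tv}g(\tv) = 2g(\Ga)-1-\sum_{\tv}g(\tv)$ and $b_1(\Ga) = g(\Ga) - \sum_x g(x)$, so
\[
t(\tGa/\Ga) = g(\Ga) - 1 - \sum_{\tv\in V(\tGa)}g(\tv) + \sum_{x\in V(\Ga)}g(x).
\]
Now I split the sums according to whether a point lies in $\Ga_\dil$. Over a free point $x$ the two preimages each have $g(\tx^{\pm}) = g(x)$, contributing $2g(x)$ above versus $g(x)$ below; over a dilated point $x$ the unique preimage $\tx$ satisfies, by $\Ram_\pi(\tx)=0$ and a local computation, $g(\tx) = 2g(x) - 1 + \tfrac{1}{2}(\val(\tx) - \text{something})$ — more precisely, on each component $\Ga_i$ the restriction $\tGa_i \to \Ga_i$ is a connected free double cover (since $\Ga_i$ is where dilation is "concentrated" as a sub-double-cover), so $g(\tGa_i) = 2g(\Ga_i)-1$ and hence $\sum_{\tv\in\tGa_i}g(\tv) - \sum_{x\in\Ga_i}g(x)$ can be controlled. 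Carrying this through, each free point contributes $-g(x)\le 0$ to the correction, and each dilation component $\Ga_i$ contributes a term bounded above by $0$, with equality exactly when $g(\Ga_i)=1$; additionally all points outside $\Ga_\dil$ must have genus $0$. Assembling these gives $t(\tGa/\Ga)\le g(\Ga)-1$ with the stated equality conditions (a) and (b).

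\textbf{Main obstacle.} The delicate point is the local analysis at dilated vertices in part (3): correctly accounting for the genus of the preimage $\tv$ of a dilated vertex $v$ in terms of $g(v)$, $\val(v)$, and how many incident edges are dilated versus free, and then organizing this per-component so that the contribution of each $\Ga_i$ is visibly $\le 0$ with a clean equality criterion. Once the bookkeeping ``each free point costs $g(x)$, each dilation component costs $g(\Ga_i)-1$'' is established, the inequality and its equality case follow formally. I expect part (1) to be a one-line consequence of Riemann--Hurwitz, part (2) to be quoted, and essentially all the work to be in the per-component genus accounting for part (3).
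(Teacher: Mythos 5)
Your overall strategy coincides with the paper's: part (1) by global Riemann--Hurwitz, part (2) by citation, and part (3) by writing $t(\tGa/\Ga)=g(\Ga)-1-\sum_{\tv}g(\tv)+\sum_{x}g(x)$ and accounting separately for free points and for each dilation component. The identity you are aiming for, $t(\tGa/\Ga)=g(\Ga)-1-\sum_{i}[g(\Ga_i)-1]-\sum_{x\notin\Ga_{\dil}}g(x)$, is exactly the one the paper establishes, and the equality analysis is then immediate once one knows $g(\Ga_i)\geq 1$, which follows from the semistability of part (2).

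However, the step you single out as the main obstacle is exactly where your sketch goes wrong. The restriction of $\pi$ over a dilation component $\Ga_i$ is \emph{not} a connected free double cover: by definition every point of $\Ga_{\dil}$ has a unique preimage of local degree $2$, so $\pi^{-1}(\Ga_i)$ is a single copy of $\Ga_i$, with $b_1(\pi^{-1}(\Ga_i))=b_1(\Ga_i)$ rather than $2b_1(\Ga_i)-1$. The genus relation $g(\pi^{-1}(\Ga_i))=2g(\Ga_i)-1$ you invoke is still true (it holds for any unramified degree-two harmonic morphism, and the restriction over $\Ga_i$ is one), but combined with the ``free'' Betti-number count your bookkeeping would give a per-component correction of $\sum_{x\in\Ga_i}g(x)$ instead of the correct $g(\Ga_i)-1$; these differ by $b_1(\Ga_i)-1$ and would spoil both the inequality and the equality criterion whenever some $\Ga_i$ is not a single cycle. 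The clean way to finish the local analysis (and the paper's way) is to pin down the local Riemann--Hurwitz identity at a dilated point: writing $\dval(x)$ for the valence of $x$ inside $\Ga_{\dil}$, one has $\val(\tx)=2\val(x)-\dval(x)$ and hence $g(\tx)=2g(x)-1+\tfrac12\dval(x)$. Summing this over $V(\Ga_i)$ gives $\sum_{\tv\in\pi^{-1}(\Ga_i)}g(\tv)-\sum_{x\in\Ga_i}g(x)=g(\Ga_i)-1$, and the same identity also yields part (2) directly ($\dval(x)$ must be even, and $g(x)\geq 1$ when $\dval(x)=0$), so nothing needs to be outsourced to the citation.
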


\begin{proof} The first equality follows from the global Riemann--Hurwitz formula~\eqref{eq:globalRH}. To prove the second part, denote by $\dval(x)$ the valence of a dilated point $x\in \Ga_{\dil}$ in the subgraph $\Ga_{\dil}$. Since $\pi$ is unramified, the local Riemann--Hurwitz condition~\eqref{eq:localRH} at the preimage point $\tx=\pi^{-1}(x)$ reads
\[
g(\tx)=2g(x)-1+\frac{1}{2}\dval(x).
\]
Hence $\dval(x)$ is even, and furthermore $g(x)\geq 1$ if $\dval(x)=0$. Hence each $\Ga_i$ is semistable, and in particular $g(\Ga_i)\geq 1$. 

By counting the preimage vertices and edges, it is elementary to verify that
\[
t(\tGa/\Ga)=b_1(\Ga)-1-\sum_{i=1}^d[b_1(\Ga_i)-1]=g(\Ga)-1-\sum_{i=1}^d[g(\Ga_i)-1]-\sum_{x\notin \Ga_{\dil}}g(x).
\]
Since $g(\Ga_i)\geq 1$ for all $i$, this proves the third part of the lemma.
    
\end{proof}

A free double cover $\pi:\tGa\to \Ga$ on a given tropical curve $\Ga$ is equivalent to the structure of a \emph{signed graph} on $\Ga$. Choose an oriented model for $\Ga$, and for each $v\in V(\Ga)$ label the preimages by $\pi^{-1}(v)=\{\tv^+,\tv^-\}$ arbitrarily. For each $e\in E(\Ga)$, we label the preimages $\pi^{-1}(e)=\{\te^+,\te^-\}$ in such a way that $s(\te^{\pm})=\widetilde{s(e)}^{\pm}$, so that the source map preserves the signs on the preimages. The edges of $\Ga$ are then equipped with signs $\sigma:E(\Ga)\to \{+1,-1\}$ according to whether or not the target map preserves or flips the signs of the preimages:
\[
\sigma(e)=\begin{cases}
    +1, & t(\te^{\pm})=\widetilde{t(e)}^{\pm},\\
    -1, & t(\te^{\pm})=\widetilde{t(e)}^{\mp}.
\end{cases}
\]
Exchanging the labels of the preimages $\{\tv^+,\tv^-\}$ of a single vertex $v\in V(\Ga)$ flips the signs of all non-loop edges at $v$, so the simplicial cohomology class $[\sigma]\in H^1(\Ga,\ZZ/2\ZZ)$ is well-defined. Running the construction in reverse, we obtain a free double cover $\pi:\tGa\to \Ga$ from an element of $H^1(\Ga,\ZZ/2\ZZ)$, with the zero element corresponding to the split double cover (for any $\tx\in \tGa$ we have $g(\tx)=g(\pi(\tx))$ by the local Riemann--Hurwitz condition).

As we noted above, the contraction of an unramified morphism is unramified, in particular, the contraction of a double cover $\pi:\tGa\to \Ga$ along a set of edges $F\subset E(\Ga)$ is a double cover $\pi_F:\tGa_{\tF}\to \Ga_F$. We note that $\pi_F$ may be dilated even if $\pi$ is free, this happens if and only if $F$ contains an odd cycle.

\subsection{Moduli of tropical curves and double covers} Let $G$ be a stable weighted graph of genus $g\geq 2$, so that if $g(v)=0$ for $v\in V(G)$ then $\val(v)\geq 3$. It is elementary to verify that $|E(G)|\leq 3g-3$, with equality holding if and only if $G$ is trivalent and has trivial vertex genera. A tropical curve with underlying model $G$ is obtained by choosing positive lengths $\ell(e)$ for all $e\in E(G)$, so the positive orthant $M_G=\RR^{E(G)}_{>0}$ (more accurately, the quotient $\RR^{E(G)}_{>0}/\Aut(G)$) parametrizes stable tropical curves with underlying model $G$. 

Now let $F\subset E(G)$ be a set of edges and let $G_F$ be the contraction of $G$ along $F$. We attach $M_{G_F}$ to $M_G$ as the face of the orthant where the lengths of the edges in $F$ are equal to zero. Similarly, if $f:G\to H$ is an isomorphism stable weighted graphs, then we identify $M_G$ and $M_H$ via the corresponding bijection $E(G)\to E(H)$. The union of the cones $M_G$ over all stable weighted metric graphs of genus $g$, identified as above, is the \emph{moduli space of tropical curves} of genus $g$:
\[
\calM_g^{\trop}=\left.\bigsqcup_{G}M_G\right/\sim
\]
Since each $G$ has at most $3g-3$ edges, the dimension of $\calM_g^{\trop}$ is equal to $3g-3$, and each cone is contained in the boundary of a cone of maximal dimension.

We similarly define the \emph{moduli space of double covers of tropical curves} of genus $g$
\[
\calR_g^{\trop}=\left.\bigsqcup_{\tG/G}R_{\tG/G}\right/\sim
\]
Here the union is taken over all connected double covers $p:\tG\to G$ of stable weighted graphs of genus $g(G)=g$. Each $R_{\tG/G}=\RR_{>0}^{E(G)}$ is a cone parametrizing double covers of tropical curves $\pi:\tGa\to \Ga$ with underlying model $p$ (the edge lengths of $\Ga$ determine those of $\tGa$). Given a double cover $p:\tG\to G$ and a set of edges $F\subset E(G)$, the contraction $p_F:\tG_{\tF}\to G_F$ along the edges in $F$ is also unramified and hence a double cover, and we attach $R_{\tG_{\tF}/G_F}$ to $R_{\tG/G}$ accordingly. Similarly, we glue the cones along isomorphisms of double covers. The maximal-dimensional cones in $\calR^{\trop}_g$ have dimension $3g-3$ and correspond to free double covers $p:\tG\to G$, where $G$ is trivalent and has trivial vertex genera (by Lemma~\ref{lem:torusrank}, such a $G$ cannot be the target of a dilated double cover). We note that by contracting a free double cover we may obtain edge-free double covers but not double covers with dilated edges, hence the closure of the union of the maximal-dimensional cones of $\calR_g^{\trop}$ is the set of edge-free double covers and not all of $\calR_g^{\trop}$.

\subsection{Graph gonality}\label{subsec:gonality} We recall that an algebraic curve $X$ is called \emph{$n$-gonal} if it carries a $g^1_n$, or equivalently if it admits a degree $n$ map to $\mathbb{P}^1$. The tropical analogues of these two definitions are not equivalent, and in fact the second is stronger than the first. One may say that a metric graph $\Ga$ is $n$-gonal if it carries a $g^1_n$ in the sense of Baker and Norine~\cite{baker2007riemann}. Baker's specialization lemma states that the tropicalization of an $n$-gonal algebraic curve is an $n$-gonal metric graph; however, since that lemma is an inequality, gonality may increase under tropicalization. One consequence is that Baker--Norine gonality loci in $\calM_g^{\trop}$ are poorly behaved and generally have incorrect dimensions. 

We use the alternative definition of gonality in terms of the existence of a harmonic map to a metric tree. This definition also allows us to reformulate the classical trigonal construction in the tropical setting, which is the main technical tool of this paper.

\begin{definition} Let $\Ga$ be a tropical curve. A \emph{tropical modification} of $\Ga$ is a tropical curve obtained by attaching finitely many metric trees to $\Ga$.
\end{definition}

The points of the new metric trees are assumed to have genus zero, so tropical modification does not change the genus. We now tropicalize the second definition of gonality by replacing $\mathbb{P}^1$ with a tropical curve of genus zero, that is to say, a metric tree.

\begin{definition} Let $\Ga$ be a tropical curve. An \emph{$n$-gonal structure} on $\Ga$ is an effective harmonic morphism $\ph:\Ga'\to \De$ of degree $n$, where $\Ga'$ is a tropical modification of $\Ga$ and $\De$ is a metric tree of genus zero. A tropical curve $\Ga$ is called \emph{$n$-gonal} if it admits an $n$-gonal structure.

\end{definition}

This definition of gonality was advanced in the paper~\cite{cools2018metric}, where it was shown that it satisfies the same properties, in terms of dimension in moduli, as the gonality of an algebraic curve:

\begin{theorem}[Theorem 1 in~\cite{cools2018metric}] Let $\Ga$ be a tropical curve of genus $g$ and let $n\geq \lceil(g+2)/2\rceil$. Then there exists an $n$-gonal structure $\ph:\Ga'\to \De$ on $\Ga$. \label{thm:CD}

\end{theorem}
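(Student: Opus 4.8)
The goal is to exhibit the harmonic morphism, and the plan is to borrow it from algebraic geometry; I will describe this route first, and then sketch a more self-contained combinatorial construction of the kind presumably used in~\cite{cools2018metric}.

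\emph{Route 1: algebraic Brill--Noether plus semistable reduction.} Since the edge lengths of $\Ga$ are arbitrary positive reals, fix a complete algebraically closed non-archimedean field $K$ with value group $\RR$. A standard construction — smooth a semistable curve whose dual graph is a model of $\Ga$ at the ``speeds'' prescribed by the edge lengths, placing a smooth curve of the appropriate genus at each weighted vertex — produces a smooth proper curve $X/K$ whose Berkovich skeleton, with its intrinsic metric, is isometric to $\Ga$. The Brill--Noether number of a pencil of degree $n$ is $\rho(g,1,n)=2n-g-2$, which is $\geq 0$ for every $n\geq\lceil(g+2)/2\rceil$; by the classical Brill--Noether existence theorem (Kempf, Kleiman--Laksov), \emph{every} smooth curve then carries a $g^1_n$, so $X$ admits a finite morphism $f\colon X\to\PP^1_K$ of degree $n$. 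After a finite extension of $K$ one may choose semistable models $\mathfrak X$ of $X$ and $\mathfrak P$ of $\PP^1$ and a finite morphism $\mathfrak f\colon\mathfrak X\to\mathfrak P$ extending $f$ (semistable reduction for a map to $\PP^1$). Taking dual graphs of the special fibers, $\mathfrak f$ induces a finite harmonic morphism $\ph=\trop(\mathfrak f)\colon\Ga'\to\De$ of degree $n$, where $\De$ is the dual graph of the special fiber of $\mathfrak P$ — a tree of $\PP^1$'s, hence a metric tree of genus $0$ — and $\Ga'$ is a tropical modification of $\Ga$, the extra components created in the reduction of $X$ being rational and attached along trees.

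It remains to check that $\ph$ is effective. For $\tv\in V(\Ga')$ the corresponding component $X_{\tv}$ of the special fiber of $\mathfrak X$ maps to $\PP^1_{\ph(\tv)}$ by a finite morphism of degree $d_\ph(\tv)$; comparing the combinatorial Riemann--Hurwitz identity~\eqref{eq:globalRH} with the usual one applied to $X_{\tv}\to\PP^1_{\ph(\tv)}$, and using that in a finite morphism of semistable models the preimage of a node is a union of nodes, one finds that $\Ram_\ph(\tv)$ equals the degree of the ramification divisor of $X_{\tv}\to\PP^1_{\ph(\tv)}$ supported \emph{away from the nodes}, which is $\geq 0$. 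Hence $\ph$ is an effective degree-$n$ harmonic morphism from a tropical modification of $\Ga$ to a metric tree, that is, an $n$-gonal structure, and the argument works verbatim for every $n\geq\lceil(g+2)/2\rceil$. The one non-formal input here is the existence of the simultaneous semistable reduction $\mathfrak f$ together with the identification of the resulting source skeleton as a tropical modification of $\Ga$; this is the technical heart, but it is standard, being the easy ``tropicalization'' direction of the lifting correspondence for finite harmonic morphisms.

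\emph{Route 2: a direct combinatorial construction.} One first reduces to a loopless, weightless model — a weighted vertex only makes the Riemann--Hurwitz inequalities easier, since extra ramification can be placed there, and loops may be subdivided — and, if convenient, to a trivalent graph, using that $n$-gonality does not increase under contracting an edge of the target together with the corresponding part of the source. Choose a spanning tree $T\subseteq G$; the $g=b_1(G)$ non-tree edges $e_1,\dots,e_g$ each close up a unique cycle in $T\cup e_i$. Now build the target tree $\De$ and the morphism $\ph$ by a folding procedure in which $T$ contributes the ``base'' degree $1$ and the non-tree edges are consumed in pairs, each pair producing exactly one additional sheet, with a single leftover cycle (when $g$ is odd) also producing one, for a total degree $1+\lceil g/2\rceil=\lceil(g+2)/2\rceil$. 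The prototype is the genus-$2$ theta graph on vertices $u,v$ with edges $a,b,c$: setting $\ph(u)=\ph(v)$ and folding each of $a,b,c$ at its midpoint presents the graph as a degree-$2$ cover of a three-legged star, and collapsing a suitably chosen pair of vertices (or subgraph) in the target handles each pair of non-tree edges similarly. The \textbf{main obstacle} on this route is global consistency: the cycles $T\cup e_i$ overlap along $T$, so the local foldings interfere, and one must organize the construction — via a careful choice of spanning tree and ordering of the $e_i$, or an ear decomposition compatible with the pairing — so that the harmonic balance condition $d_\ph(\tv)=\sum_{\tilde h\mapsto h}d_\ph(\tilde h)$ holds at every vertex at once, so that $\De$ genuinely has no cycle, and so that $\Ram_\ph(\tv)=d_\ph(\tv)\chi(\ph(\tv))-\chi(\tv)\geq 0$ everywhere; this last condition is a local Riemann--Hurwitz bound on the valence of each source vertex relative to its image and its local degree, which the fold structure should deliver but which constitutes the bulk of the bookkeeping.
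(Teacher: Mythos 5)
First, note that the paper does not prove this statement: it is quoted directly from~\cite{cools2018metric} (Theorem~1 there), and the only original content in the surrounding text is the one-line observation that the case of positive vertex genera follows because raising $g$ only strengthens the hypothesis on $n$. So there is no in-paper proof to match; the relevant comparison is with Cools--Draisma, whose argument is a direct combinatorial construction of the harmonic morphism to a tree -- essentially your Route~2. On that route you have correctly located where all the work lies (organizing the foldings so that harmonicity, acyclicity of the target, and the local Riemann--Hurwitz inequalities hold simultaneously), but you leave exactly that step undone, so Route~2 as written is a sketch rather than a proof.

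Route~1 is a genuinely different argument and is essentially sound, granting the standard but heavy inputs you name: realization of $\Ga$ as the skeleton of a smooth curve over a non-archimedean field with value group $\RR$, simultaneous semistable reduction of a finite morphism to $\PP^1$, and the fact that the induced map of skeleta is a finite harmonic morphism whose source is a tropical modification of $\Ga$. Your effectivity check via Riemann--Hurwitz on the components of the special fiber is correct, and it survives wild ramification since that only increases the ramification divisor. The one genuine gap is the step ``$W^1_n\neq\emptyset$, so $X$ admits a finite morphism of degree $n$'': a $g^1_n$ produced by Brill--Noether existence may have base points, so what you directly obtain is a finite morphism of some degree $n'\leq n$, hence an $n'$-gonal structure, whereas the statement asks for degree exactly $n$ (and the paper does use $n=3$ on genus-$2$ graphs, whose gonality is $2$, so the discrepancy is not vacuous). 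This is fixable -- either produce a base-point-free $g^1_n$ for every $n$ at least the gonality, or pad the degree on the tropical side by gluing a copy of the target tree $\De$ onto $\Ga'$ at a preimage of a chosen vertex, which raises the degree by one, preserves harmonicity, and only increases the ramification at the glued vertex -- but as written the claim of degree exactly $n$ is not justified. The trade-off between the two routes is clear: Route~1 buys the theorem from classical Brill--Noether theory at the cost of the full lifting/tropicalization apparatus, while the combinatorial route is self-contained and is what~\cite{cools2018metric} actually carries out.
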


We note that Theorem 1 in~\cite{cools2018metric} concerns only metric graphs, but the result also trivially applies to tropical curves with positive vertex genera, since increasing $g$ without changing the underlying metric graph only strengthens the condition on $n$.

One of the key observations of~\cite{cools2018metric} is that the number of edges of the target tree $\De$, and hence the dimension of the space of $n$-gonal curves in $\calM_g^{\trop}$, may be bounded in terms of $n$ and $g$. The following statement appears as Corollary 14 in~\cite{cools2018metric}; however, that paper is written in somewhat different language than ours (and for metric graphs only), so we reprove it for the reader's convenience.

\begin{proposition} Let $\Ga$ be a tropical curve of genus $g$ and let $n\geq \lceil(g+2)/2\rceil$. Then there exists an $n$-gonal structure $\ph:\Ga'\to \De$ such that the target tree $\De$ has at most $2g+2n-5$ edges.
\label{prop:2g+2n-5}
\end{proposition}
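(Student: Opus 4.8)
The plan is to start from any $n$-gonal structure $\ph\colon\Ga'\to\De$ --- one exists by Theorem~\ref{thm:CD} --- simplify it until the target is as small as possible, and then read off the bound from a Riemann--Hurwitz count. I would normalize from the start: take the minimal model of $\De$, so that every non-leaf vertex has valence $\geq 3$, and assume $\Ga$ has no genus-$0$ leaves (this is harmless, since a pendant genus-$0$ subtree of $\Ga$ may be absorbed into one of the metric trees attached in the definition of an $n$-gonal structure, and in any case the curves of interest are stable). Since tropical modification preserves the genus we have $\chi(\Ga')=2-2g$ and $\chi(\De)=2$, so the global Riemann--Hurwitz formula~\eqref{eq:globalRH} pins down the total ramification exactly:
\[
\Ram(\ph)=\deg(\ph)\,\chi(\De)-\chi(\Ga')=2g+2n-2 .
\]

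The next step is a combinatorial reduction. Write $l$ for the number of leaves of $\De$; counting half-edges shows that a tree whose non-leaf vertices all have valence $\geq 3$ has at most $2l-3$ edges (for $l\geq 2$), so since $2(g+n-1)-3=2g+2n-5$ it suffices to exhibit an $n$-gonal structure of $\Ga$ with $l\leq g+n-1$. I would do this by choosing $\ph$ so that the number of edges of $\De$ is \emph{minimal} among all $n$-gonal structures of $\Ga$, and then analyzing the fibre over a leaf $v$ of $\De$. Since $g(v)=0$ and $\val(v)=1$ we have $\chi(v)=1$, and a short inspection of~\eqref{eq:localRH}, using the local degree condition at $v$, shows that a preimage $\tv\in\ph^{-1}(v)$ can have $\Ram_\ph(\tv)=0$ only when $\tv$ is a genus-$0$ leaf of $\Ga'$ with $d_\ph(\tv)=1$, and $\Ram_\ph(\tv)=1$ only when $\tv$ is a genus-$0$ leaf of $\Ga'$ with $d_\ph(\tv)=2$ (a genus-$0$ valence-$2$ vertex over $v$ is impossible, since its two half-edges of local degree $\geq 1$ cannot map to a single half-edge of local degree $1$). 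Consequently, if the total ramification over $v$ is at most $1$, then the entire fibre over $v$ consists of genus-$0$ leaves of $\Ga'$, each attached to the rest of $\Ga'$ by one pendant edge over the edge $e_v$ of $\De$ at $v$; since $\Ga$ has no genus-$0$ leaves, all of this lies in the attached trees, so deleting it together with $v$ and $e_v$ produces an $n$-gonal structure of $\Ga$ with strictly fewer edges in the target --- contradicting minimality. (Here one checks routinely that the deletion preserves connectedness, harmonicity and the global degree, and that effectiveness is preserved at the neighbouring vertex over the other end of $e_v$, which loses half-edges of total local degree at most $d_\ph$ there while its image loses exactly one half-edge.)

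Therefore, in a minimal $\ph$, every leaf of $\De$ carries at least $2$ units of ramification. As the fibres over distinct leaves are disjoint and $\ph$ is effective, $2l\leq\Ram(\ph)=2g+2n-2$, so $l\leq g+n-1$ and hence $|E(\De)|\leq 2l-3\leq 2g+2n-5$. (When $\De$ has at most one edge the inequality is immediate, since $2g+2n-5\geq 1$ in the relevant range.)

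I expect the crux to be obtaining the factor $2$ in the ramification bound. Charging each leaf of $\De$ to a single ramification point above it --- the naive argument --- gives only $l\leq 2g+2n-2$ and the much weaker bound $4g+4n-7$. The improvement rests on the dichotomy above: a leaf with only one unit of ramification over it has a completely degenerate fibre, and minimality of $\De$ then forbids such leaves. Pinning this down requires the careful (though elementary) local analysis of~\eqref{eq:localRH} over a leaf, together with the verification that the pruning step really returns an $n$-gonal structure of $\Ga$ itself and not merely of some modification of it --- which is precisely where the reduction to $\Ga$ without genus-$0$ leaves is used.
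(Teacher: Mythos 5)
Your overall strategy --- prune the gonal structure until every leaf of $\De$ carries at least $2$ units of ramification, then convert the Riemann--Hurwitz identity $\Ram(\ph)=2g+2n-2$ into an edge count --- is the same as the paper's, and your local analysis of the fibre over a leaf (the dichotomy $\Ram_\ph(\tv)=0\Rightarrow d_\ph(\tv)=1$, $\Ram_\ph(\tv)=1\Rightarrow d_\ph(\tv)=2$, both forcing genus-$0$ extremal preimages) is correct. But there is a genuine gap at the very first normalization: you pass to the minimal model of $\De$ so that every non-leaf vertex has valence $\geq 3$, and then use the bound $|E(\De)|\leq 2l-3$, which is only valid for such trees. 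A harmonic morphism of metric graphs is a morphism of underlying \emph{models}, and a valence-$2$ vertex $v$ of $\De$ cannot be smoothed if some preimage $\tv\in\ph^{-1}(v)$ is a genuine vertex of $\Ga'$ --- for instance $\tv$ of genus $0$ and valence $3$ with two half-edges over one side of $v$ and one over the other, which has $d_\ph(\tv)=2$ and $\Ram_\ph(\tv)=1$, so it is perfectly compatible with effectiveness and cannot be removed. Such valence-$2$ vertices can occur even in a target with the minimal number of edges, your minimality argument only addresses leaves, and without controlling $a_2$ (the number of valence-$2$ vertices) the inequality $|E(\De)|\leq 2l-3$ simply fails: a subdivided path has $l=2$ and arbitrarily many edges.

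The fix is exactly the extra case the paper runs: for a valence-$2$ vertex $v$ with total ramification $0$ over it, every preimage is a genus-$0$ valence-$2$ vertex, so $v$ and its fibre can be smoothed (reducing $|E(\De)|$, hence excluded by your own minimality hypothesis); therefore in a minimal structure each valence-$2$ vertex carries at least $1$ unit of ramification. The correct count is then $2a_1+a_2\leq \Ram(\ph)=2g+2n-2$ together with the identity
\[
|E(\De)|=2a_1+a_2-3-\sum_{i\geq 4}(i-3)a_i\leq 2a_1+a_2-3\leq 2g+2n-5,
\]
which recovers the stated bound without assuming $a_2=0$. With that amendment your argument coincides with the paper's proof.
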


\begin{proof} Let $\ph:\Ga'\to \De$ be any $n$-gonal structure on $\Ga$, which exists by Theorem~\ref{thm:CD}. By definition,
\[
\Ram_{\varphi}(\tv)=d_\varphi(\tv)\chi(v)-\chi(\tv)=d_\varphi(\tv)(2-\val(v))-2+2g(\tv)+\val(\tv)\geq 0
\]
for all $\tv\in V(\Ga')$, where $v=\varphi(\tv)$.

Now let $v\in V(\De)$ be an extremal vertex at the end of an edge $e\in E(\De)$, and let $\tv\in \ph^{-1}(v)$. The above inequality implies that if $\Ram_{\ph}(\tv)=0$ then $g(\tv)=0$, $d_\ph(\tv)=1$, and $\val(\tv)=1$, and if $\Ram_{\ph}(\tv)=1$ then $g(\tv)=0$, $d_\ph(\tv)=2$, and $\val(\tv)=1$. In both cases, $\tv$ is an extremal genus zero vertex at the end of an edge of $\Ga'$ lying above $e$, along which $\ph$ has degree $1$ or $2$. Therefore, if $\Ram_{\ph}(\tv)\leq 1$ for all $\tv\in \ph^{-1}(v)$, then we can modify $\ph:\Ga'\to \De$ by removing the edge $e$ and its preimages. Similarly, let $v\in V(\De)$ be a vertex of valence 2. If $\Ram_{\ph}(\tv)=0$ for $\tv\in \ph^{-1}(v)$, then $g(\tv)=0$ and $\val(\tv)=2$, and if this holds for all $\tv\in \ph^{-1}(v)$, then $v$ and all vertices over it may be removed from the models.

Applying these modifications, we may assume that $\ph:\Ga'\to \De$ has the following properties:

\begin{enumerate}
    \item The total ramification degree over each $v\in V(\De)$ with $\val(v)=1$ is at least 2.

    \item The total ramification degree over each $v\in V(\De)$ with $\val(v)=2$ is at least 1.
\end{enumerate}
Now let $a_i$ be the number of vertices of $\De$ of valence $i$. By the global Riemann--Hurwitz formula~\eqref{eq:globalRH}, the ramification degree of $\ph$ is equal to
\[
\Ram(\ph)=n\chi(\De)-\chi(\Ga)=2g+2n-2\geq 2a_1+a_2. 
\]
On the other hand, we know that
\[
a_1+2a_2+3a_3+\cdots=2|E(\De)|,\quad a_1+a_2+a_3+\cdots=|V(\De)|=|E(\De)|+1,
\]
which implies that
\[
|E(\De)|\leq 2a_1+a_2-3\leq 2g+2n-5.
\]

\end{proof}

\subsection{Tropical ppavs and tropical moments} A \emph{tropical ppav} $X=(\Lambda,\Lambda',[\cdot,\cdot],\zeta)$ of dimension $d$ is determined by a pair of free abelian groups $\Lambda$ and $\Lambda'$ of rank $d$, a nondegenerate pairing $[\cdot,\cdot]:\Lambda\times \Lambda'\to \RR$, and an isomorphism $\zeta:\Lambda'\to \Lambda$ that induces a symmetric positive definite bilinear form $(\cdot,\cdot)=[\zeta(\cdot),\cdot]$. The map $\lambda'\mapsto [\cdot,\lambda']$ defines an inclusion $\Lambda'\subset \Lambda^*_{\RR}=\Hom(\Lambda,\RR)$, and the ppav itself is the $d$-dimensional torus $\Hom(\Lambda,\RR)/\Lambda'$. The inner product $(\cdot,\cdot)$ induces a norm $\|\cdot\|$ on $\Lambda^*_{\RR}$, and the \emph{Voronoi polytope} of $X$ is the convex symmetric polyhedral domain
\[
\Vor(X)=\{x\in \Lambda^*_{\RR}:\|x\|\leq \|x-y\|\mbox{ for all }y\in \Lambda'\}.
\]
The \emph{$2n$-th moment} $I_{2n}(X)$ is the integral
\[
I_{2n}(X)=\int_{\Vor(X)}\|x\|^{2n}dx.
\]
Since the Voronoi polytope is a fundamental domain for the lattice $\Lambda'$, the zeroth moment computes the volume of $X$ as a Riemannian manifold:
\[
I_0(X)=\Vol(X),
\]
while the second moment
\[
I_2(X)=\int_{\Vor(X)}\|x\|^2dx
\]
has arithmetic significance (see~\cite{2022deJongShokrieh}).

In this paper, we consider two examples of tropical ppavs. Let $\Ga$ be a metric graph and let $H_1(\Ga,\ZZ)$ be the simplicial homology group of $\Ga$. We interpret $H_1(\Ga,\ZZ)$ as the space $\Omega^1(\Ga)$ of \emph{harmonic 1-forms} on $\Ga$, by viewing a 1-cycle $\sum_{e\in E(\Ga)} a_ee\in H_1(\Ga,\ZZ)$ as a 1-form $\sum_{e\in E(\Ga)} a_ede\in \Omega^1(\Ga)$. The lattices $\Lambda=\Omega^1(\Ga)$, $\Lambda'=H_1(\Ga,\ZZ)$, the \emph{integration pairing}
\begin{equation}
\Omega^1(\Ga)\times H_1(\Ga,\ZZ)\to \RR,\quad
\left[\sum a_ede,\sum b_ee\right]=\sum a_eb_e\ell(e),
\label{eq:integrationpairing}
\end{equation}
and the trivial isomorphism $H_1(\Ga,\ZZ)=\Omega^1(\Ga)$ define the \emph{tropical Jacobian} $\Jac(\Ga)$, a tppav of dimension $g(\Ga)=\dim H_1(\Ga,\ZZ)$. Tropical Abel--Jacobi theory identifies $\Jac(\Ga)$ with $\Pic_0(\Ga)$, the set of equivalence classes of degree zero divisors on $\Ga$.

The \emph{tropical Prym variety} $\Prym(\tGa/\Ga)$ of a double cover $\pi:\tGa\to \Ga$ of metric graphs was defined in~\cite{2018JensenLen} in analogy with the algebraic setting, as the connected component of the identity of the kernel of the norm map
\[
\pi_*:\Jac(\tGa)\to \Jac(\Ga)
\]
on divisors. However, this definition does not behave well under edge contractions of $\Ga$. Specifically, the Prym variety $\Prym(\tGa_F/\Ga_F)$ of a double cover $\pi_F:\tGa_F\to \Ga_F$ obtained from a double cover $\pi:\tGa\to \Ga$ by contracting a set of edges $F\subset E(\Ga)$ is not equal to the limit of $\Prym(\tGa/\Ga)$ if $d(\tGa_F/\Ga_F)>d(\tGa/\Ga)$. Hence $\Prym(\tGa/\Ga)$, which we henceforth call the \emph{divisorial Prym variety}, does not behave well in moduli and is not a convenient object for our purposes.

This problem was rectified in the paper~\cite{2022RoehrleZakharov}, where we constructed an alternative object, the \emph{continuous Prym variety} $\Prym_c(\tGa/\Ga)$ of a double cover $\pi:\tGa\to \Ga$. The involution $\iota:\tGa\to\tGa$ associated to $\pi$ defines pullback and pushforward maps
\[
\iota^*:\Omega^1(\tGa)\to \Omega^1(\tGa),\quad
\iota_*:H_1(\tGa,\ZZ)\to H_1(\tGa,\ZZ),
\]
and we consider the lattices
\[
\Lambda=\Omega^1(\tGa)/\Ker(\Id-\iota^*),\quad \Lambda'=\operatorname{Im}\iota_*.
\]
The restriction of the isomorphism $H_1(\tGa,\ZZ)=\Omega^1(\tGa)$ to $\Lambda'$ defines a map $\Lambda'\to \Lambda$ than can be divided by two to obtain a natural isomorphism $\zeta:\Lambda'\to \Lambda$. The continuous Prym variety is the dimension $t(\tGa/\Ga)$ tropical ppav associated to the lattices $\Lambda$ and $\Lambda'$, the restriction of the integration pairing, and the isomorphism $\zeta$. It is elementary to verify that the lattices $\Lambda$ and $\Lambda'$ behave well under contraction, and hence we have the following result:

\begin{proposition} Let $\pi:\tGa\to \Ga$ be a double cover, let $F\subset E(\Ga)$ be a set of edges, and let $\pi_F:\tGa_{\tF}\to \Ga_F$ be the double cover obtained by contracting the edges in $F$. The Prym variety $\Prym(\tGa_{\tF}/\Ga_F)$ is the limit of the Prym variety $\Prym(\tGa/\Ga)$ as $\ell(e)\to 0$ for all $e\in F$. In particular, for all $n\geq 0$ we have
\[
I_{2n}(\Prym_c(\tGa_{\tF}/\Ga_F))=\lim_{\ell(e)\to 0,\,e\in F} I_{2n}(\Prym_c(\tGa/\Ga)).
\]
\label{prop:Prymlimit}
\end{proposition}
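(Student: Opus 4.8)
The plan is to reduce the proposition to the fact, asserted in the paragraph preceding the statement, that the lattices $\Lambda$ and $\Lambda'$ defining the continuous Prym are compatible with edge contraction, and then to deduce the limit formula for the moments from the continuity of the moment functional along the resulting degeneration of tropical ppavs. Throughout, recall that $\Lambda$, $\Lambda'$ and the polarization $\zeta$ are purely combinatorial, so that $\Prym_c(\tGa/\Ga)$ depends on the length vector $\ell$ only through the restriction of the integration pairing~\eqref{eq:integrationpairing}, which is linear in $\ell$.

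The first step is to make the lattice compatibility precise. Write $\tF=\pi^{-1}(F)$. Under the identification $\Omega^1(\tGa)=H_1(\tGa,\ZZ)$, forgetting the coordinates indexed by $\tF$ defines a surjection $H_1(\tGa,\ZZ)\twoheadrightarrow H_1(\tGa_{\tF},\ZZ)$ whose kernel is the sublattice of $1$-cycles supported on $\tF$; since $\tF$ is invariant under the deck involution, this surjection commutes with $\iota$, so it descends to surjections $\Lambda\twoheadrightarrow\Lambda(\tGa_{\tF}/\Ga_F)$ and $\Lambda'\twoheadrightarrow\Lambda'(\tGa_{\tF}/\Ga_F)$ intertwined by the (length-independent) polarizations $\zeta$ and $\zeta_F$. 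Checking that the kernels and polarizations are exactly as stated is the elementary verification alluded to in the text; I would carry it out by a direct computation on a common model of $\pi$ and $\pi_F$, or by quoting the corresponding statement from~\cite{2022RoehrleZakharov}.

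The second step is to identify the limiting object. As $\ell(e)\to 0$ for $e\in F$, the integration pairing restricted to $\Lambda$ and $\Lambda'$ converges to the positive semidefinite pairing obtained by deleting from~\eqref{eq:integrationpairing} the summands indexed by the edges of $\tF$. By inspection of~\eqref{eq:integrationpairing}, a class pairs to zero against everything in this limit exactly when it is represented by a cycle on $\tF$, so the radical of the limiting pairing is precisely the kernel found in the first step; and the pairing it induces on the quotient lattices is, again by~\eqref{eq:integrationpairing}, the integration pairing of $\tGa_{\tF}/\Ga_F$. Combined with the compatibility of $\zeta$, this identifies the limit of the family $\Prym_c(\tGa/\Ga)$ over the face $\{\ell(e)=0:e\in F\}$ — that is, the quotient of $(\Lambda,\Lambda',\zeta)$ by the radical of the limiting pairing — with $\Prym_c(\tGa_{\tF}/\Ga_F)$. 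This is the first assertion of the proposition.

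The third step is the moment identity. Because the Voronoi polytope and the norm are built from the pairing, and the pairing extends continuously to the boundary face, one obtains the moment convergence by collapsing the directions spanned by the radical — along which $\Vor(\Prym_c(\tGa/\Ga))$ degenerates — and comparing the integrals in the transverse slices, which converge to the corresponding integral for $\Prym_c(\tGa_{\tF}/\Ga_F)$. I expect this last step to be the main point requiring care: one must verify that the degeneration of the Voronoi polytope is uniform and that $\int\|x\|^{2n}$ passes cleanly to the limit, and here one genuinely uses the integrality in the first step, namely that $\Lambda'(\tGa_{\tF}/\Ga_F)$ is the quotient of $\Lambda'$ by its intersection with the radical, with no lattice index lost. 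This is also precisely the point at which the argument breaks for the divisorial Prym, whose lattice fails to be compatible with contraction when the dilation index jumps.
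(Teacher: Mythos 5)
The paper offers no proof of this proposition beyond the remark that the lattices $\Lambda$ and $\Lambda'$ ``behave well under contraction,'' so your first step is precisely the content the authors have in mind, and it is fine: the contraction map $H_1(\tGa,\ZZ)\twoheadrightarrow H_1(\tGa_{\tF},\ZZ)$ is $\iota$-equivariant, its kernel is the sublattice of cycles supported on $\tF$, and it induces compatible surjections of the Prym lattices intertwining the polarizations.

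The problem is your third step, and it is not just a matter of care: in the setting you describe --- a nonzero radical, a genuine quotient --- the moment identity is actually \emph{false}. If the torus rank drops under contraction (equivalently, if $\Lambda'$ contains a nonzero anti-invariant cycle supported on $\tF$, e.g.\ if $F$ contains a balanced cycle), then $\Vor(\Prym_c(\tGa/\Ga))$ collapses along the radical directions and every moment tends to $0$: for an orthogonal splitting $X_1\times X_2$ with $X_2$ collapsing one has $I_0(X)=I_0(X_1)I_0(X_2)\to 0$ and $I_2(X)=I_2(X_1)I_0(X_2)+I_0(X_1)I_2(X_2)\to 0$, whereas the moments of the lower-dimensional quotient $\Prym_c(\tGa_{\tF}/\Ga_F)$ are in general nonzero. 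The same failure is visible in~\eqref{eq:zerothweightPrym}: a balanced loop $e$ is a circuit of size one of $\calM(\tGa/\Ga)$, hence lies in no basis, so every monomial of $w_0(\tGa/\Ga)$ contains $e$ and $\lim_{\ell(e)\to 0}w_0(\tGa/\Ga)=0$, while $w_0$ of the contraction is nonzero. ``Comparing the integrals in the transverse slices'' conflates the integral over the full, collapsing polytope with the integral over a slice. The proposition therefore needs the implicit hypothesis $t(\tGa_{\tF}/\Ga_F)=t(\tGa/\Ga)$, which holds in every application the paper makes of it (contracting odd loops and odd cycles to create dilated vertices). Under that hypothesis your radical is zero, steps two and three collapse, and the argument becomes short: $\Lambda$, $\Lambda'$, $\zeta$ are canonically unchanged, the Gram matrix of the integration pairing converges to a positive \emph{definite} limit, and $I_{2n}$ is a continuous function of a positive definite Gram matrix, since only finitely many lattice vectors cut out $\Vor$ locally. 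I would either add the rank-preservation hypothesis explicitly or verify it for the contractions actually used, rather than build the proof around the degenerate picture in which the claim fails.
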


We also observe that neither $\Prym_c(\tGa/\Ga)$ nor $\Prym(\tGa/\Ga)$ change when we contract a dilated edge of $\Ga$, since the lattices $\Lambda$ and $\Lambda'$ do not change. In other words, we have the following result: 

\begin{proposition} Let $\pi:\tGa\to \Ga$ be a double cover and let $F=E(\Ga_{\dil})$ be the set of dilated edges of $\Ga$. Then $\pi_{\tF}:\tGa_{\tF}\to \Ga_F$ is an edge-free double cover and 
\[
\Prym(\tGa_{\tF}/\Ga_F)\simeq\Prym(\tGa/\Ga).
\]
\label{prop:dilatededgesPrym}
In particular, for all $n\geq 0$ we have
\[
I_{2n}(\Prym_c(\tGa_{\tF}/\Ga_F))=I_{2n}(\Prym_c(\tGa/\Ga)).
\]
\end{proposition}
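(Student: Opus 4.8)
The plan is to prove the two assertions in turn; the moment identity then follows formally, since $I_{2n}$ of a tropical ppav depends only on its isomorphism class. The geometric input common to both is a statement about the deck involution $\iota\colon\tGa\to\tGa$: every dilated point and every dilated edge of $\Ga$ has a single preimage in $\tGa$, onto which $\pi$ restricts bijectively, so the identity $\pi\circ\iota=\pi$ forces $\iota$ to be the identity on $\pi^{-1}(\Ga_{\dil})$. Consequently $\pi$ restricts to a homeomorphism $\pi^{-1}(\Ga_{\dil})\to\Ga_{\dil}$, the connected components $\tGa_1,\dots,\tGa_d$ of $\pi^{-1}(\Ga_{\dil})$ are in bijection with the components $\Ga_1,\dots,\Ga_d$ of $\Ga_{\dil}$, and $\tF=\pi^{-1}(F)$ is exactly the edge set of $\pi^{-1}(\Ga_{\dil})$.

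For the first assertion, contracting $F$ collapses each $\Ga_i$ containing an edge to a point $v_i$, and simultaneously each such $\tGa_i$ to a point $\tv_i$ (point components being unaffected). By the definition of the contracted morphism, $d_{\pi_{\tF}}(\tv_i)=\deg(\pi|_{\tGa_i})=2$, so every $v_i$ is a dilated point of $\Ga_F$; conversely every remaining vertex of $\Ga_F$ is a vertex of $\Ga$ lying off $\Ga_{\dil}$, whose fibre is unchanged and therefore still free, and every edge of $\Ga_F$ is an edge of $\Ga$ not in $F$, hence not dilated. Thus the dilation subcurve of $\Ga_F$ is the finite set $\{v_1,\dots,v_d\}$, so $\pi_{\tF}$ is an edge-free double cover, and in particular $d(\tGa_{\tF}/\Ga_F)=d(\tGa/\Ga)$.

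For the second assertion, I would show that the contraction map $c\colon\tGa\to\tGa_{\tF}$ induces an isomorphism of the lattices-with-pairing defining the continuous Prym variety. On homology $c_{*}\colon H_1(\tGa,\ZZ)\to H_1(\tGa_{\tF},\ZZ)$ is surjective with kernel $\bigoplus_{i=1}^{d}H_1(\tGa_i,\ZZ)$, which, because $\iota$ acts as the identity on each $\tGa_i$, is a sublattice of $\Ker(\Id-\iota_{*})=\Ker(\Id-\iota^{*})$. Hence the discarded cycles contribute nothing to $\Lambda=\Omega^1(\tGa)/\Ker(\Id-\iota^{*})$ or to $\Lambda'\subset H_1(\tGa,\ZZ)$, and $c$ should descend to isomorphisms $\Lambda(\tGa/\Ga)\simeq\Lambda(\tGa_{\tF}/\Ga_F)$ and $\Lambda'(\tGa/\Ga)\simeq\Lambda'(\tGa_{\tF}/\Ga_F)$, compatibly with the integration pairing~\eqref{eq:integrationpairing} (the edges of $\tF$ drop out of the relevant sums) and with the polarization $\zeta$, which is unchanged; this gives $\Prym_c(\tGa_{\tF}/\Ga_F)\simeq\Prym_c(\tGa/\Ga)$. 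A parallel argument handles the divisorial Prym $\Prym(\tGa/\Ga)=\Ker(\pi_{*})^{0}\subset\Jac(\tGa)$: since $\pi$ is injective on each $\tGa_i$, the discarded cycles are not killed by the norm map $\pi_{*}$, hence are transverse to the Prym sublocus, which $c$ therefore carries isomorphically. The moment identity now follows from isomorphism-invariance of $I_{2n}$; alternatively, it can be read off from Proposition~\ref{prop:Prymlimit} once one knows that $\Prym_c(\tGa/\Ga)$, Voronoi polytope included, is literally independent of the lengths of the edges in $F$.

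The one point requiring care is the second assertion, and specifically the apparent tension with the fact that contraction along $\tF$ genuinely changes $H_1(\tGa,\ZZ)$ and $\Jac(\tGa)$ whenever a dilated edge of $\Ga$ lies on a cycle --- for instance in the maximal-torus-rank case, where $\Ga_{\dil}$ may consist of simple dilated cycles. One must pin down precisely which sublattice is discarded and verify, at the integral level, that it lies inside the $\iota$-invariants compatibly with the integral structures of both $\Lambda$ and $\Lambda'$, so that passing to the quotient and to the sublattice introduces neither torsion nor a change of index. Everything else is a routine comparison of the two cochain-level descriptions of the continuous Prym.
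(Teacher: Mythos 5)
Your argument is correct and is essentially an expanded version of the paper's own justification, which consists of the single preceding observation that the lattices $\Lambda$ and $\Lambda'$ defining $\Prym_c$ do not change under contraction of dilated edges --- precisely because, as you note, the deck involution fixes $\pi^{-1}(\Ga_{\dil})$ pointwise, so the discarded cycles lie in $\Ker(\Id-\iota_*)$ and anti-invariant cycles have no support on dilated edges. The only loose end is the ``transversality'' claim for the divisorial Prym, which the paper likewise asserts without proof; the moment identity needs only the statement for $\Prym_c$, and your integral verification reduces to $\Im(\Id-\iota_*)\cap\Ker(\Id-\iota_*)=0$, which holds by torsion-freeness of $H_1(\tGa,\ZZ)$.
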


These two results imply that it is sufficient to compute the second moment $I_2(\Prym(\tGa/\Ga))$ for all free double covers, since the second moment for any edge-free double cover, and hence any double cover, can then be obtained as a limit.

The continuous Prym variety comes equipped with a natural map $\Prym_c(\tGa/\Ga)\to \Prym(\tGa/\Ga)$ to the divisorial Prym variety of degree $2^{d(\tGa/\Ga)-1}$. In particular, $\Prym_c(\tGa/\Ga)=\Prym(\tGa/\Ga)$ when $d(\tGa/\Ga)=1$, in other words when $\pi$ is free or has connected dilation subgraph. Our main tool for computing the second moment of the tropical Prym variety, the trigonal construction, is only defined for free double covers. Hence, in practice we do not need to distinguish $\Prym_c(\tGa/\Ga)$ and $\Prym(\tGa/\Ga)$.

\begin{remark} The Jacobian $\Jac(\Ga)$ of a metric graph $\Ga$ is a tropical ppav of dimension $g$, so it is natural to assume that its volume should be a degree $g$ function in the edge lengths of $\Ga$. Note, however, that the integration pairing~\eqref{eq:integrationpairing}, and hence the inner product that it defines, is linear, rather than quadratic, in the edge lengths. Hence units of length on $\Ga$ become units of area on $\Jac(\Ga)$, and the volume of $\Jac(\Ga)$ is a function of degree $g/2$ in the edge lengths of $\Ga$. More generally, $I_{2n}(\Jac(\Ga))$ is a function of degree $g/2+n$, while $I_{2n}(\Prym(\tGa/\Ga))$ is a function of degree $t(\tGa/\Ga)/2+n$ in the edge lengths of $\Ga$. 

\label{rem:dimension}
\end{remark}

\subsection{The trigonal construction} Finally, we recall the tropical trigonal construction, which is our principal tool for computing the second moment of the tropical Prym variety. Given a connected \'etale double cover $p:\tX\to X$ of an algebraic curve $X$ with a fixed trigonal map $X\to \PP^1$, Recillas~\cite{1974Recillas} constructs a tetragonal curve $Y\to \PP^1$ such that the Jacobian $\Jac(Y)$ is isomorphic to the Prym variety $\Prym(\tX/X)$ (in particular, $g(Y)=g(X)-1$). In~\cite{2022RoehrleZakharov}, we defined a tropical analogue of Recillas's construction.

Let $\pi:\tGa\to \Ga$ be a free double cover of a trigonal tropical curve $\ph:\Ga\to \De$ (if it is necessary to attach trees to $\Ga$ to define $\ph$, we attach two copies of each tree to $\tGa$ and extend $\pi$ trivially). For $x\in \De$, define the pullback
\[
\ph^*(x)=\sum_{y\in \ph^{-1}(x)}d_{\ph}(y)y\in \Div(\Ga),
\]
the free abelian group on the points of $\Ga$. Similarly, we define the pushforward of a divisor on $\tGa$:
\[
\pi_*\sum_{\widetilde{y}\in \tGa} a_{\widetilde{y}}\widetilde{y}=\sum_{\widetilde{y}\in \tGa} a_{\widetilde{y}}\pi(\widetilde{y}).
\]
We now consider the set of sections of the fibers of $\pi:\tGa\to \Ga$ over the various points of $\Delta$:
\[
\widetilde{\Pi}=\left\{D\in \Div(\tGa):D\geq 0\mbox{ and }\pi_*(D)=\ph^*(x)\mbox{ for some (necessarily unique) }x\in \Delta\right\}.
\]

\begin{theorem}[Theorem 5.1 in~\cite{2022RoehrleZakharov}] Let $\pi:\tGa\to \Ga$ be a connected free double cover of metric graphs and let $\ph:\Ga\to \De$ be a harmonic morphism of degree 3, where $\De$ is a metric tree. \label{thm:trigonal} 

\begin{enumerate}
    \item There is a natural structure of a metric graph on $\widetilde{\Pi}$ such that the map $\widetilde{\psi}:\widetilde{\Pi}\to \De$, which sends $D\in \widetilde{\Pi}$ to the unique $x\in \Delta$ such that $\pi_*(D)=\ph^*(x)$, is a harmonic morphism of degree 8.

    \item The metric graph $\widetilde{\Pi}$ consists of two isomorphic connected components $\Pi$ that are exchanged by the involution that is induced by the fixed-point-free involution $\iota:\tGa\to \tGa$ corresponding to the double cover, and the restriction $\psi:\Pi\to \De$ of $\widetilde{\psi}$ to each component is a harmonic morphism of degree 4. 
    \item The metric graph $\Pi$ has genus $g(\Pi)=g(\Ga)-1$, and the Prym variety of the free double cover $\pi:\tGa\to \Ga$ and the Jacobian variety of $\Pi$ are isomorphic as tropical principally polarized abelian varieties:
\[
\Prym_c(\tGa/\Ga)\simeq \Jac(\Pi).
\]

\end{enumerate}

\end{theorem}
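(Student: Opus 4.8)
The plan is to study the fibers of $\widetilde{\psi}:\widetilde{\Pi}\to\Delta$ over the base tree, first over the interiors of edges and then over the finitely many special points, and only at the end to read off the tropical ppav data. For $x$ lying in an edge of a suitable model of $\Delta$ over which $\varphi$ has three distinct preimages $y_1,y_2,y_3$ of local degree $1$ --- the typical situation once $g(\Gamma)\geq 1$ --- the freeness of $\pi$ gives two preimages $y_i^+,y_i^-$ of each $y_i$, so $\widetilde{\psi}^{-1}(x)$ consists of exactly the $2^3=8$ divisors $y_1^{\pm}+y_2^{\pm}+y_3^{\pm}$, and these vary isometrically with $x$. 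To see that the resulting segments assemble into a metric graph on which $\widetilde{\psi}$ is harmonic of degree $8$, I would go through the local shape of $\varphi$ over each vertex $v\in V(\Delta)$ (together with the dilation profile along each edge, if $\varphi$ has edges of dilation $>1$) --- that is, the partition of $3$ recording the local degrees at $\varphi^{-1}(v)$, the coincidences among the $y_i$, and the matching of the $\pm$-labels of preimages along incident edges, which is precisely the data carried by the signed-graph structure of $\pi$. For each shape one lists the effective divisors $D$ with $\pi_*(D)=\varphi^*(v)$, reads off their local degrees $d_{\widetilde{\psi}}$ from how the nearby segments limit onto them, and checks that these sum to $8$ over every incident edge-germ, which is the harmonicity identity~\eqref{eq:harmonicglobaldegree} and simultaneously pins down the metric graph structure near each special divisor. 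Since $\pi$ is unramified, the compatibility between $d_\varphi$ and the $\pm$-matchings is forced and the count always comes out right; this proves~(1).

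For~(2), the fixed-point-free involution $\iota$ of $\tGa$ acts on $\Div(\tGa)$, commutes with $\pi_*$, and hence descends to an involution of $\widetilde{\Pi}$ commuting with $\widetilde{\psi}$. It acts freely: an $\iota$-invariant effective divisor is a union of orbits $\{\widetilde{y}^+,\widetilde{y}^-\}$ with multiplicities, hence of even degree, whereas $\deg D=3$. So $\widetilde{\Pi}\to\widetilde{\Pi}/\iota$ is a free double cover, and to show that it is split --- equivalently that $\widetilde{\Pi}$ is disconnected --- I would attach to each $D=y_1^{\epsilon_1}+y_2^{\epsilon_2}+y_3^{\epsilon_3}$ the parity $\epsilon_1\epsilon_2\epsilon_3\in\{\pm 1\}$, computed relative to a $\pm$-labeling of the fibers of $\pi$ that is consistent along a spanning tree of a model of $\Gamma$. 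The crux --- and the step I expect to be the main obstacle --- is to show that this parity is globally well defined and locally constant on $\widetilde{\Pi}$: the $\pm$-labels are only defined modulo the class $[\sigma]\in H^1(\Gamma,\ZZ/2\ZZ)$, so one must check that the monodromy of the eight-sheeted morphism $\widetilde{\psi}$ about each vertex of $\Delta$ acts on the $\epsilon_i$ by permutations and sign changes preserving $\epsilon_1\epsilon_2\epsilon_3$, which again reduces to the local shapes catalogued above, now using the harmonicity of $\varphi$. Granting well-definedness, the two parity values cut $\widetilde{\Pi}$ into two nonempty pieces exchanged by $\iota$ (which flips all three $\epsilon_i$, and $3$ is odd), and a connectivity argument --- any two same-parity divisors over a common $x$ differ by an even number of sheet switches, each realizable by moving $x$ through a ramification point, with the three sheets linked to one another because $\Gamma$ is connected --- shows that each piece is connected. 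Writing $\Pi$ for either piece, $\psi:\Pi\to\Delta$ is harmonic of degree $8/2=4$, which establishes~(2).

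Finally, for~(3), the identity $g(\Pi)=g(\Gamma)-1$ follows by applying the Riemann--Hurwitz formula~\eqref{eq:globalRH} to $\psi:\Pi\to\Delta$ and to $\varphi:\Gamma\to\Delta$ and summing the ramification contributed by each local shape. For the isomorphism $\Prym_c(\tGa/\Ga)\simeq\Jac(\Pi)$ of tropical ppavs I would use the incidence correspondence $I=\{(D,\widetilde{y})\in\widetilde{\Pi}\times\tGa:\widetilde{y}\in\operatorname{supp}D\}$, resolved into a metric graph near coincidences of support, with its two projections $p:I\to\widetilde{\Pi}$ and $q:I\to\tGa$. The pull--push maps $q_*\circ p^*$ on $H_1(-,\ZZ)$ and $p_*\circ q^*$ on $\Omega^1(-)=H^1(-)$ carry $H_1(\Pi,\ZZ)$ into the $\iota$-anti-invariant sublattice of $H_1(\tGa,\ZZ)$ and carry $\Omega^1(\tGa)$ onto $\Omega^1(\Pi)$ while annihilating $\Ker(\Id-\iota^*)$, so they relate the lattices $\Lambda,\Lambda'$ defining $\Prym_c(\tGa/\Ga)$ to those defining $\Jac(\Pi)$. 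One then checks that these maps are isomorphisms of lattices carrying the integration pairing of $\Pi$ to the restriction of the integration pairing of $\tGa$, the factor of $2$ that appears being precisely the one built into the isomorphism $\zeta$ in the definition of $\Prym_c$. As both sides have rank $g(\Gamma)-1=t(\tGa/\Ga)$, it then suffices to verify injectivity (equivalently surjectivity) together with the pairing identity, and for this one may reduce to combinatorially simple trigonal covers --- for instance those whose base tree has the minimal number of edges permitted by Proposition~\ref{prop:2g+2n-5} --- using that both $\Jac(\Pi)$ and $\Prym_c(\tGa/\Ga)$ transform compatibly under contracting edges of $\Delta$. Confirming that $I$ really implements an \emph{isometric} isomorphism of the two lattice-with-pairing data, and not merely an abstract isomorphism of real tori, is the heart of the argument.
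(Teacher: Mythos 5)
This statement is imported verbatim from~\cite{2022RoehrleZakharov} (Theorem 5.1 there) and the present paper contains no proof of it, so there is nothing internal to compare your argument against; your outline does, however, follow the same strategy as the cited source: local analysis of the fibers of $\widetilde{\psi}$ over edges and vertices of $\De$, a parity argument for the splitting of $\widetilde{\Pi}$, and a correspondence between $\tGa$ and $\Pi$ inducing the isomorphism of lattices with pairings. Two places in your write-up deserve sharpening. First, the parity $\epsilon_1\epsilon_2\epsilon_3$ defined via $\pm$-labels chosen along a spanning tree of $\Ga$ is \emph{not} locally constant as literally defined: when a single point $y_i$ of the fiber $\ph^{-1}(x)$ traverses an edge $e$ of $\Ga$ with $\sigma(e)=-1$, its label flips while the divisor $D$ varies continuously, so the label-based parity jumps at the vertex of $\De$ under the endpoint of $e$. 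The correct invariant is label-free: two sections $D,D'$ of $\pi$ over the same fiber $\ph^{-1}(x)$ are equivalent if they differ at an even number of points of the fiber. This partition is manifestly locally constant over the interiors of edges of $\De$, and the entire content of part (2) is the verification, at each local shape over a vertex of $\De$, that sections in different classes never limit to a common divisor; your plan to catalogue local shapes is the right one, but as stated your invariant would fail before you ever reach that check. Second, the connectedness of each piece $\Pi$ (which genuinely uses the connectedness of $\tGa$: switching a single lift $y_i^+\leftrightarrow y_i^-$ requires transporting $y_i$ around an odd cycle of $\Ga$ by moving $x$ out and back in the tree $\De$ through ramification of $\ph$) and the isometry property of the correspondence in part (3) are asserted rather than argued; you correctly identify the latter as the heart of the matter, but the proposal as written defers essentially all of the substantive verifications to unexecuted case checks.
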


We note again that $\Prym(\tGa/\Ga)=\Prym_c(\tGa/\Ga)$ for free double covers. We also note that the paper~\cite{2022RoehrleZakharov} did not consider vertex genera, in particular $\ph$ is not required to be effective. If $\Ga$ has vertex genera and $\ph$ is effective, then there is a natural way to assign vertex genera to $\Pi$ in such a way that $g(\Pi)=g(\Ga)-1$ and that $\psi$ is effective as well. We do not require this and leave the details to the avid reader.

\section{Tropical moments and matroids}

In this section, we recall the known formulas for the moments of tropical abelian varieties. The zeroth moments (the volumes) of the Jacobian and the Prym were computed in~\cite{2014AnBakerKuperbergShokrieh} and~\cite{2022LenZakharov}, respectively. We restate these formulas in terms of certain matroids: the graphic matroid $\calM(\Ga)$ determines the volume of the Jacobian $\Jac(\Ga)$, while the signed graphic matroid $\calM(\tGa/\Ga)$ determines the volume of the Prym variety $\Prym(\tGa/\Ga)$ of a double cover $\tGa\to \Ga$. The second moment of the Jacobian was found in~\cite{2023deJongShokrieh}. However, the formula given in~\cite{2023deJongShokrieh} is not matroidal, and in Proposition~\ref{prop:I2Jac} we find an equivalent matroidal formula, which forms the basis of our matroidal formula for the second moment of the Prym variety.

\subsection{Moments of the tropical Jacobian in terms of the graphic matroid} Let $\Ga$ be a metric graph together with a choice of underlying model that we denote by $\Ga$ by abuse of notation. Let $m=|E(\Ga)|$ and $n=|V(\Ga)|$, so that the genus is $g=g(\Ga)=m-n+1$. We recall the definition of the graphic matroid $\calM(\Ga)$ of $\Ga$. The ground set is the set of edges $E(\Ga)$, and an edge set $F\subset E(\Ga)$ is independent if the subgraph $\Ga[F]$ spanned by $F$ contains no cycles. The bases $\calB(\calM(X))$ are the edge sets of the spanning trees, so that the rank of $\calM(\Ga)$ is the number of edges in a spanning tree and is equal to
\[
\rk(\calM(X))=m-g=n-1.
\]
Finally, the circuits of $\calM(\Ga)$ are the simple closed cycles. 

Since $\calM(\Ga)$ contains information about the cycles of $\Ga$, it is reasonable to expect that the Jacobian $\Jac(\Ga)$ can be reconstructed solely from the graphic matroid $\calM(\Ga)$ and the edge length assignment $\ell:E(\Ga)\to \RR_{>0}$ on its ground set. This is indeed the case, and the relationship between $\calM(\Ga)$ and $\Jac(\Ga)$ was explored in detail in~\cite{2010CaporasoViviani} and \cite{2011BrannettiMeloViviani} (see also~\cite{1982Gerritzen}). In particular, we should be able to calculate all moments of $\Jac(\Ga)$ directly from the matroid $\calM(\Ga)$.

The zeroth moment, or volume, of the tropical Jacobian was computed in~\cite{2014AnBakerKuperbergShokrieh}, and we recall this result. Let $T\in \calB(\calM(\Ga))$ be the edge set of a spanning tree of $\Ga$. We define the \emph{weight} of $T$ and the \emph{zeroth weight} of the entire graph $\Ga$ as
\begin{equation}
w(T)=\prod_{e\notin T}e,\quad w_0(\Ga)=\sum_{T\in \calB(\calM(\Ga))}w(T).
\label{eq:zerothweightJac}
\end{equation}
Here and henceforth we use the same letter to denote an edge $e\in E(\Ga)$ of a metric graph and its length. The weight $w_0(\Ga)$ is a homogeneous degree $g$ polynomial in the edge lengths $e$, and furthermore it has degree at most one in each edge length. By Remark~\ref{rem:dimension}, $I_0(\Jac(\Ga))$ has degree $g/2$ in the edge lengths, so we may expect it to be expressible in terms of $w_0(\Ga)$. This is indeed the case.

\begin{theorem}[Theorem 5.2 in~\cite{2014AnBakerKuperbergShokrieh}] The zeroth moment of the tropical Jacobian $\Jac(\Ga)$ of a metric graph $\Ga$ is equal to
\begin{equation}
I_0(\Jac(\Ga))=\sqrt{w_0(\Ga)}=\frac{w_0(\Ga)}{\sqrt{w_0(\Ga)}}.
\label{eq:I0Jac}
\end{equation}
\end{theorem}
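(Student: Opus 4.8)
The plan is to compute $I_0(\Jac(\Ga))$ as the covolume of the period lattice of $\Jac(\Ga)$, expand that covolume by the Cauchy--Binet formula, and identify the resulting sum over edge subsets of $\Ga$ with the spanning-tree sum defining $w_0(\Ga)$.

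First I would unwind the definition. Fix an oriented model of $\Ga$ with $|E(\Ga)|=m$, $|V(\Ga)|=n$, and $g=m-n+1$, and fix a $\ZZ$-basis $c_1,\dots,c_g$ of $H_1(\Ga,\ZZ)$, writing $c_i=\sum_{e\in E(\Ga)}a_{ie}e$. Under the identification $H_1(\Ga,\ZZ)=\Omega^1(\Ga)$ and the integration pairing~\eqref{eq:integrationpairing}, the induced inner product on the period lattice is $(c_i,c_j)=\sum_{e}a_{ie}a_{je}\ell(e)$. Since $\Vor(\Jac(\Ga))$ is a fundamental domain for the period lattice, $I_0(\Jac(\Ga))=\sqrt{\det \Gram}$, where $\Gram=\bigl((c_i,c_j)\bigr)_{i,j}$. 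Writing $A=(a_{ie})$ for the $g\times m$ integral \emph{cycle matrix} and $D=\mathrm{diag}(\ell(e))_{e\in E(\Ga)}$, this reads $\Gram=ADA^{T}$, a positive definite $g\times g$ matrix, so the theorem reduces to showing $\det(ADA^{T})=w_0(\Ga)$.

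Next I would apply Cauchy--Binet to $AD A^{T}$, obtaining
\[
\det(ADA^{T})=\sum_{\substack{S\subseteq E(\Ga)\\ |S|=g}}(\det A_S)^2\prod_{e\in S}\ell(e),
\]
where $A_S$ is the $g\times g$ submatrix of $A$ on the columns indexed by $S$. It then suffices to prove the combinatorial claim that $\det A_S=\pm 1$ when $T:=E(\Ga)\setminus S$ is the edge set of a spanning tree and $\det A_S=0$ otherwise; granting this, the sum collapses to $\sum_{T\in\calB(\calM(\Ga))}\prod_{e\notin T}\ell(e)=w_0(\Ga)$, and taking square roots yields~\eqref{eq:I0Jac}. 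For the claim itself, the row space of $A$ is the cycle space $Z(\Ga)\subseteq\RR^{E(\Ga)}$, the kernel of the oriented incidence map, so the column matroid of $A$ is the cographic matroid $\calM(\Ga)^{*}$, whose bases are exactly the complements of spanning trees; this gives the vanishing/non-vanishing dichotomy. For the sharper statement that the surviving minors are $\pm1$, I would note that any two $\ZZ$-bases of $H_1(\Ga,\ZZ)$ differ by an element of $\mathrm{GL}_g(\ZZ)$, which scales every maximal minor of $A$ by a single unit, so it is enough to check one convenient basis: taking $c_1,\dots,c_g$ to be the fundamental cycles relative to a fixed spanning tree $T_0$, $A$ has block form $[\,I_g \mid C\,]$ (non-tree edges first), where $C$ records the signed $T_0$-paths between endpoints of non-tree edges; $C$ is a network matrix, hence totally unimodular, so every maximal minor of $[\,I_g \mid C\,]$ lies in $\{0,\pm1\}$.

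The main obstacle is precisely this last step — that the support of the minor sum is exactly the set of complements of spanning trees (graphic matroid duality) and that the surviving minors equal $\pm1$ rather than larger integers (total unimodularity). The cleanest route I know is the fundamental-cycle/network-matrix argument sketched above, equivalently Tutte's theorem that the standard representation $[-C^{T}\mid I]$ of the dual of a regular matroid presented by a totally unimodular $[\,I\mid C\,]$ is again totally unimodular, applied to the totally unimodular incidence matrix of $\Ga$. Everything else — the Cauchy--Binet expansion and the covolume interpretation of $I_0$ — is routine. One could instead bypass unimodularity altogether by deducing $\det(ADA^T)=w_0(\Ga)$ from the weighted matrix--tree theorem via the orthogonality of the cycle and cut spaces of $\Ga$, but the Cauchy--Binet computation is the most transparent and is the approach I would write up.
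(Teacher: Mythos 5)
Your argument is correct: the covolume interpretation $I_0(\Jac(\Ga))=\sqrt{\det(ADA^{T})}$, the Cauchy--Binet expansion, and the total-unimodularity of the fundamental-cycle matrix (equivalently, the weighted matrix--tree theorem applied via cycle/cut orthogonality) together give exactly $\det(ADA^{T})=w_0(\Ga)$. The paper does not reprove this statement --- it imports it as Theorem 5.2 of the cited work of An--Baker--Kuperberg--Shokrieh --- and your proof is essentially the standard argument given there, so there is nothing to flag.
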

Note that we have intentionally factored the zeroth moment as a fraction, so that the numerator is a homogeneous polynomial of degree $g$ in the edge lengths and the denominator (which happens to be the square root of the numerator) has degree $g/2$. 

We now derive a similar matroidal formula for the second moment, by reinterpreting the results of the papers~\cite{2023deJongShokrieh} and~\cite{2023RichmanShokriehWu} in terms of the graphic matroid $\calM(\Ga)$. Let
\[
\calI_k(\calM(\Ga))=\{F\in \calI(\calM(\Ga)):|F|=k\},\quad k=0,\ldots,n-1=\rk (\calM(\Ga))
\]
be the set of independent sets of $\calM(\Ga)$ having $k$ elements. The set $\calI_{n-1}(\calM(\Ga))=\calB(\calM(\Ga))$ is the set of bases of $\calM(\Ga)$, that is to say, the edge sets of the spanning trees of $\Ga$. Similarly, elements of $\calI_{n-2}(\calM(\Ga))$ are the edge sets of \emph{spanning 2-forests} of $\Ga$ (a spanning 2-forest is allowed to have an isolated vertex as one of its two trees).

In analogy with Equation~\eqref{eq:zerothweightJac}, we now define the \emph{weight} of a spanning $2$-forest $F\in \calI_{n-2}(\calM(\Ga))$ and the \emph{first weight} of the graph $\Ga$ as 
\begin{equation}
w(F)=\prod_{e\notin F}e,\quad w_1(\Ga)=\sum_{F\in \calI_{n-2}(\calM(\Ga))}w(F).
\end{equation}
In addition, we define \emph{length} of a spanning tree $T\in \calB(\calM(\Ga))$ and the \emph{length} of the graph $\Ga$ as
\[
\ell(T)=\sum_{e\notin T}e,\quad \ell(\Ga)=\sum_{e\in E(\Ga)}e.
\]
The quantities $\ell(T)$ and $\ell(\Ga)$ are linear in the edge lengths, while $w_1(\Ga)$ is a degree $g+1$ homogeneous polynomial in the edge lengths. By Remark~\ref{rem:dimension}, the second moment of $\Jac(\Ga)$ should be a function of degree $g/2+1$ in the edge lengths. An explicit formula for the second moment of the tropical Jacobian is one of the principal results of~\cite{2023deJongShokrieh} (see Theorem 8.1), and we now give a matroidal version of this formula.

\begin{proposition} The second moment of the tropical Jacobian $\Jac(\Ga)$ of a metric graph $\Ga$ is equal to
\begin{equation}
I_2(\Jac(\Ga))=\frac{p(\Ga)}{12\sqrt{w_0(\Ga)}},\quad p(\Ga)=2w_0(\Ga)\ell(\Ga)-\sum_{T\in \calB(\calM(\Ga))}w(T)\ell(T)-2w_1(\Ga).
\label{eq:I2Jac}
\end{equation}
\label{prop:I2Jac}
\end{proposition}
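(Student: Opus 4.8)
The plan is to start from the known formula for $I_2(\Jac(\Ga))$ as stated in Theorem 8.1 of~\cite{2023deJongShokrieh} and translate each of its ingredients into matroidal quantities, using the spanning-tree and spanning-2-forest language introduced above. Recall that the de Jong--Shokrieh formula expresses $I_2(\Jac(\Ga))$ in terms of the volume $I_0(\Jac(\Ga)) = \sqrt{w_0(\Ga)}$ together with certain "effective resistance" integrals: roughly, $I_2(\Jac(\Ga))$ is $\frac{1}{12}\sqrt{w_0(\Ga)}$ times a sum over edges of $e\,R(e)$-type terms, where $R(e)$ is the effective resistance across $e$ (with unit edge conductances $1/\ell(e)$), plus a correction. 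The first step is therefore to recall the matroidal/Kirchhoff expressions: the effective resistance across an edge $e$ equals $w_0(\Ga)^{-1}$ times the sum of $w(T)$ over spanning trees $T$ not containing $e$, and the numerator of that is exactly $\sum_{F}w(F)$ over spanning $2$-forests $F$ separating the endpoints of $e$ and not containing $e$. Summing over $e$ and matching terms is the crux; this is where I would invoke~\cite{2023RichmanShokriehWu}, which presumably gives precisely such resistance/forest identities in a form ready to substitute.

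Next, I would carry out the bookkeeping that produces the three terms of $p(\Ga)$. The term $2w_0(\Ga)\ell(\Ga)$ should arise from pairing the total length $\ell(\Ga)=\sum_e e$ against the volume-squared $w_0(\Ga)$; the term $\sum_{T}w(T)\ell(T)$ is the natural "second-order" correction where each spanning tree is weighted by the total length of its complement; and the term $2w_1(\Ga)=2\sum_{F}w(F)$ collects the spanning-$2$-forest contributions coming from the effective-resistance numerators. The key combinatorial identity to verify is that
\[
\sum_{e\in E(\Ga)} e\sum_{\substack{F\in\calI_{n-2}(\calM(\Ga))\\ e\notin F,\ F\text{ separates }\partial e}} w(F)
\]
can be rewritten in terms of $\sum_{T}w(T)\ell(T)$ and $w_1(\Ga)$; concretely, for a fixed spanning $2$-forest $F$, the edges $e\notin F$ whose endpoints lie in different components of $F$ are exactly those $e$ for which $F\cup\{e\}$ is a spanning tree, and this reindexing converts the double sum into $\sum_{T}w(T)\cdot(\text{something linear})$. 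I would spell out this bijection carefully, since the precise linear factor (involving $\ell(T)$ versus $\ell(\Ga)$) is exactly what fixes the coefficients $2$, $-1$, $-2$ in $p(\Ga)$.

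The main obstacle I anticipate is precisely this matching of normalizations and signs: the de Jong--Shokrieh formula and the Richman--Shokrieh--Wu identities are stated with their own conventions for conductances, orientations, and the pairing~\eqref{eq:integrationpairing} (which, as Remark~\ref{rem:dimension} stresses, is linear rather than quadratic in the edge lengths, so degrees must be tracked scrupulously). Getting the overall factor of $\frac{1}{12}$ and the denominator $\sqrt{w_0(\Ga)}$ right, and confirming that the three polynomial terms in $p(\Ga)$ really do assemble to the same homogeneous degree-$(g+1)$ polynomial as the non-matroidal expression, will require a careful (but routine) degree count and a check on a small example such as a banana graph or a small trivalent graph. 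Once the dictionary between resistances and $2$-forests is fixed, the rest is algebraic substitution, and I would present the verification on one explicit low-genus example to make the coefficient matching transparent.
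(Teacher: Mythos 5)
Your strategy --- substitute the all-minors matrix--tree expressions for effective resistance into Theorem 8.1 of de Jong--Shokrieh and reindex the resulting tree/forest sums --- is viable, and it is essentially the ``alternative'' route that the paper itself points to (``the arguments outlined in Section 5.3 of Richman--Shokrieh--Wu''). It is genuinely different from the proof the paper actually gives: there, Theorem 11.4 of de Jong--Shokrieh converts the claim into an equivalent formula for the $\tau$-invariant; that formula is verified in the special case where every edge has length $1$ by quoting Remark 5.9 of Richman--Shokrieh--Wu verbatim; and the general case is deduced because the right-hand side is invariant under edge subdivision (integer lengths), homogeneous under rescaling (rational lengths), and continuous (real lengths). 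That reduction is precisely how the paper sidesteps the coefficient-matching you identify as the main obstacle.

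The gap in your proposal is that this coefficient-matching --- which is the entire content of the proposition --- is never carried out: you do not record the actual statement of Theorem 8.1 or the precise resistance/forest identities you intend to import, and you defer the verification to bookkeeping plus a check on one example, which cannot certify an identity required to hold for every graph and all edge lengths. Moreover, the one identity you do display does not come out as you suggest: under the bijection $(e,F)\mapsto T=F\cup\{e\}$ one has $w(F)=w(T\setminus\{e\})=e\cdot w(T)$, so
\begin{equation*}
\sum_{e\in E(\Ga)}e\sum_{\substack{F:\,e\notin F,\\ F\text{ separates }\partial e}}w(F)\;=\;\sum_{T\in\calB(\calM(\Ga))}w(T)\sum_{e\in T}e^{2},
\end{equation*}
a quadratic sum over the edges \emph{inside} $T$, not a multiple of $\ell(T)=\sum_{e\notin T}e$. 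The term $\sum_{T}w(T)\ell(T)$ instead arises from the unweighted count $\sum_{e}\sum_{F}w(F)=\sum_{T}w(T)\bigl(\ell(\Ga)-\ell(T)\bigr)$, equivalently from $\sum_{e}\bigl(\ell(e)-R(\partial e)\bigr)$ after clearing the denominator $w_0(\Ga)$. Until you pin down which resistance combinations actually appear in Theorem 8.1 and track these distinctions, the coefficients $2$, $-1$, $-2$ in $p(\Ga)$ are not established. Either carry the substitution through in full, or adopt the paper's reduction to unit edge lengths, where the needed identity is already in the literature and the extension to arbitrary lengths is a short subdivision--rescaling--continuity argument.
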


\begin{proof} Theorem 8.1 in~\cite{2023deJongShokrieh} gives a non-matroidal expression for $I_2(\Jac(\Ga))$. However, we can obtain the matroidal formula~\eqref{eq:I2Jac} by piecing together material from~\cite{2023deJongShokrieh} and the related paper~\cite{2023RichmanShokriehWu}. 
First, Theorem 11.4 in~\cite{2023deJongShokrieh} establishes the following relation between the zeroth and second moments of $\Jac(\Ga)$, the total length of $\Ga$, and a quantity called the $\tau$-invariant of $\Ga$:
\[
\frac{1}{2}\tau(\Ga)+\frac{I_2(\Jac(\Ga))}{I_0(\Jac(\Ga))}=\frac{1}{8}\ell(\Ga).
\]
We note that in~\cite{2023deJongShokrieh}, $I(\Jac(\Ga))$ denotes the quantity $I_2(\Jac(\Ga))/I_0(\Jac(\Ga))$, or the \emph{normalized} second moment of $\Ga$. Hence~\eqref{eq:I2Jac} is equivalent to the following formula for the $\tau$-invariant:
\begin{equation}
\tau(\Ga)=-\frac{1}{12}\ell(\Ga)+\frac{1}{6w_0(\Ga)}\sum_{T\in \calB(\calM(\Ga)))}w(T)\ell(T)+\frac{1}{3}\frac{w_1(\Ga)}{w_0(\Ga)}.
\label{eq:tau}
\end{equation}
First, assume that all edge lengths of $\Ga$ are equal to 1. Then $\ell(\Ga)=|E|$, while $w(T)=1$ and $\ell(T)=g$ for any $T\in \calB(\calM(\Ga))$, and~\eqref{eq:tau} specializes to
\[
\tau(\Ga)=-\frac{|E|}{12}+\frac{g}{6}+\frac{1}{3}\frac{w_1(\Ga)}{w_0(\Ga)}.
\]
This relation is proved in Remark 5.9 in~\cite{2023RichmanShokriehWu}, hence~$\eqref{eq:I2Jac}$ holds for $\Ga$.

In general, formula~\eqref{eq:I2Jac} can be proved using the arguments outlined in Section 5.3 of~\cite{2023RichmanShokriehWu}. Alternatively, we proceed as follows. If all edge lengths of $\Ga$ are integers, we pick the model having all edge lengths equal to 1, and it is elementary to verify that the right hand side of~\eqref{eq:I2Jac} is independent of the choice of model. Hence~\eqref{eq:I2Jac} holds for graphs with integer edge lengths. If the edge lengths of $\Ga$ are rational numbers, we can rescale by some $\lambda\in \ZZ$ to reduce to the case of integer edge lengths, and by virtue of Remark~\ref{rem:dimension} both sides of~\eqref{eq:I2Jac} scale by the same factor $\lambda^{g/2+1}$. Finally, $I_2(\Jac(\Ga))$ is clearly a continuous function of the edge lengths, hence the result follows for arbitrary real edge lengths.

\end{proof}

Before proceeding, we make several observations about the formula~\eqref{eq:I2Jac}. The numerator $p(\Ga)$ is a polynomial of degree $g+1$ in the edge lengths, consisting only of monomials of the form $x_{i_1}^2x_{i_2}\cdots x_{i_g}$ (the only type that occurs in the first sum) and $x_{i_1}x_{i_2}\cdots x_{i_{g+1}}$ (the only type that occurs in the second sum). If the matroid $\calM(\Ga)$ has already been computed, then~\eqref{eq:I2Jac} is an effective formula in terms of $\calM(\Ga)$. However, for an individual graph $\Ga$, finding $\calM(\Ga)$ and using~\eqref{eq:I2Jac} is computationally inefficient, as compared to using Theorem 8.1 in~\cite{2023deJongShokrieh}.

\subsection{The signed graphic matroid of a double cover} In this section, we recall how to associate a pair of dual matroids $\calM(\tGa/\Ga)$ and $\calM^*(\tGa/\Ga)$ to a double cover $\pi:\tGa\to \Ga$ of tropical curves, and recall a matroidal formula for the zeroth moment of the tropical Prym variety. In the case when $\pi$ is a free double cover, the matroid $\calM(\tGa/\Ga)$ is Zaslavsky's \emph{signed graphic matroid} as defined in~\cite{1982Zaslavsky}. We retain this terminology, even though our framework is somewhat more general than that of~\cite{1982Zaslavsky}. For a detailed description of the relation between Zaslavsky's matroid and double covers of graphs, not necessarily free, we refer the reader to~\cite{2023RoehrleZakharov}.

\begin{definition} Let $\pi:\tGa\to \Ga$ be a double cover of tropical curves. There exists a matroid $\calM^*(\tGa/\Ga)$, called the \emph{signed cographic matroid} of $\pi$, with the following properties.

\begin{enumerate} \item The ground set of $\calM^*(\tGa/\Ga)$ is the set $E_{\ud}(\Ga)=E(\Ga)\backslash E(\Ga_{\dil})$ of undilated edges of $\Ga$.

\item A set $F\subset E_{\ud}(\Ga)$ of undilated edges of $\Ga$ is independent for the matroid $\calM^*(\tGa/\Ga)$ if every connected component of $\Ga\backslash F$ has connected preimage in $\tGa$. 

\item A set $F\subset E_{\ud}(\Ga)$ of undilated edges of $\Ga$ is a basis for $\calM^*(\tGa/\Ga)$ if and only each connected component of $\Ga\backslash F$ is one of the following two types:
\begin{enumerate}
    \item A graph of genus one having no dilated vertices or edges, and having connected preimage in $\tGa$. 
    \item A tree having a connected dilation subgraph.

\end{enumerate}

\item The rank of $\calM^*(\tGa/\Ga)$ is equal to $t(\tGa/\Ga)=b_1(\tGa)-b_1(\Ga)=|E_{\ud}(\Ga)|-|V_{\ud}(\Ga)|$.

\end{enumerate}

\end{definition}

The \emph{signed graphic matroid} $\calM(\tGa/\Ga)$ of $\pi$ is the dual of $\calM^*(\tGa/\Ga)$, so the ground set of $\calM(\tGa/\Ga)$ is $E_{\ud}(\Ga)$ and the bases of $\calM(\tGa/\Ga)$ are the complements of the bases of $\calM^*(\tGa/\Ga)$. Following Zaslavsky's terminology, we call a subgraph $\Ga'\subset \Ga$ \emph{unbalanced} if it has connected preimage in $\tGa$, so basis for $\calM(\tGa/\Ga)$ is a maximal spanning subgraph whose connected components are unbalanced. The rank of $\calM(\tGa/\Ga)$ is equal to the number $v_{\ud}=|V_{\ud}(\Ga)|$ of undilated vertices of $\Ga$. As before, the set of edges of $\Ga$ is defined with respect to an underlying choice of model, and the matroids $\calM(\tGa/\Ga)$ and $\calM^*(\tGa/\Ga)$ depend on the model. We note that the vertex weights on $\tGa$ and $\Ga$ play no role in the matroids. 

We introduce one additional structure on the signed graphic matroid. 

\begin{definition} Let $\pi:\tGa\to \Ga$ be a double cover of tropical curves. The \emph{index} $i(F)$ of a basis $F\in \calB(\calM^*(\tGa/\Ga))$ of the signed cographic matroid is
\[
i(F)=4^{c(F)-1},
\]
where $c(F)$ is the number of connected components of $\Ga\backslash F$. The \emph{index} $i(F)$ of a basis $F\in \calB(\calM(\tGa/\Ga))$ of the signed graphic matroid is the index of $E_{\ud}(\Ga)\backslash F\in \calB(\calM^*(\tGa/\Ga))$.

\end{definition}

As we mentioned above, the Jacobian $\Jac(\Ga)$ of a metric graph $\Ga$ can be reconstructed from the graphic matroid $\calM(\Ga)$ and the edge length assignment $\ell:E(\Ga)\to \RR_{>0}$, and therefore all moments of $\Jac(\Ga)$ are computable from $\calM(\Ga)$ and $\ell$. Similarly, we showed in~\cite{2023RoehrleZakharov} that the Prym variety $\Prym(\tGa/\Ga)$ of a double cover $\tGa\to \Ga$ can be reconstructed from the signed graphic matroid $\calM(\tGa/\Ga)$, the edge length assignment $\ell:E(\Ga)\to \RR_{>0}$, and the index function $i:\calB(\calM(\tGa/\Ga))\to \ZZ$. Therefore, all moments of $\Prym(\tGa/\Ga)$ can be computed in terms of the triple $(\calM(\tGa/\Ga),\ell,i)$. 

In~\cite{2022LenZakharov} (see also~\cite{2024GhoshZakharov}), we found such a formula for the zeroth moment of $\Prym(\tGa/\Ga)$. In analogy with $\Jac(\Ga)$ and $\calM(\Ga)$, we define the \emph{weight} of a basis $F$ of $\calM(\tGa/\Ga)$ as the product of the lengths of the complementary undilated edges:
\begin{equation}
w(F)=\prod_{e\in E_{\ud}(\Ga)\backslash F} e.
\label{eq:ogodweight}
\end{equation}
The \emph{zeroth weight} of the double cover $\tGa\to \Ga$ is the sum of the weights of the bases, modified by the indices:
\begin{equation}
w_0(\tGa/\Ga)=\sum_{F\in \calB(\calM(\tGa/\Ga))} i(F)w(F).
\label{eq:zerothweightPrym}
\end{equation}

\begin{proposition} The zeroth moment of the continuous tropical Prym variety $\Prym_c(\tGa/\Ga)$ of a double cover $\pi:\tGa\to \Ga$ of tropical curves is equal to
\begin{equation}
I_0(\Prym_c(\tGa/\Ga))=\sqrt{w_0(\tGa/\Ga)}=\frac{w_0(\tGa/\Ga)}{\sqrt{w_0(\tGa/\Ga)}}.
\label{eq:I0Prym}
\end{equation}
\label{prop:I0Prym}
\end{proposition}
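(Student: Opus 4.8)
The plan is to compute $I_0(\Prym_c(\tGa/\Ga))=\Vol(\Prym_c(\tGa/\Ga))$ directly as a Gram determinant and expand it by the Cauchy--Binet formula, matching the resulting monomials with the bases of the signed graphic matroid; this mirrors the proof of~\eqref{eq:I0Jac} for the Jacobian. First I would reduce to the edge-free case: by Proposition~\ref{prop:dilatededgesPrym}, contracting the dilated edges of $\Ga$ changes neither $\Prym_c(\tGa/\Ga)$ nor, since $E_{\ud}(\Ga)$ is unaffected, the matroid $\calM(\tGa/\Ga)$ and the weight $w_0(\tGa/\Ga)$, so it suffices to treat the case where $\Ga_{\dil}$ is a finite set of points. (The trigonal construction of Theorem~\ref{thm:trigonal} plays no role here, and the argument works in all genera.)

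Next I would use that $\Vor(X)$ is a fundamental domain for the translation action of $\Lambda'$ on $\Lambda^*_{\RR}$, so that $I_0(X)=\Vol(\Lambda^*_{\RR}/\Lambda')$ is the square root of the Gram determinant of $\Lambda'$ with respect to the natural inner product on $\Lambda^*_{\RR}$, evaluated on any $\ZZ$-basis. For the continuous Prym, $\Lambda'\subset H_1(\tGa,\ZZ)$ is spanned by the classes $\gamma-\iota_*\gamma$ and the pairing is the integration pairing~\eqref{eq:integrationpairing} on $\tGa$, rescaled by the factor of two absorbed into $\zeta$. To put the Gram matrix in Cauchy--Binet form, I would orient a model, write homology classes as integer edge-coefficient vectors — on which the integration pairing is the restriction of $\operatorname{diag}\big(\ell(\te)\big)_{\te\in E(\tGa)}$ — and use that each undilated edge $e$ of $\Ga$ has two preimages of length $\ell(e)$ on which an anti-invariant class takes opposite values. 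This identifies the Gram matrix of a basis $B$ of $\Lambda'$ with $C^{\mathsf T}LC$, where $L=\operatorname{diag}\big(\ell(e)\big)_{e\in E_{\ud}(\Ga)}$ and $C$ is an integer matrix with columns indexed by $E_{\ud}(\Ga)$ (the factor of two in $\zeta$ exactly cancels the doubling coming from anti-invariance). Cauchy--Binet then gives
\[
I_0(\Prym_c(\tGa/\Ga))^2=\sum_{\substack{F\subseteq E_{\ud}(\Ga)\\ |F|=t(\tGa/\Ga)}}(\det C_F)^2\prod_{e\in F}\ell(e),
\]
where $C_F$ is the maximal square submatrix of $C$ on the columns indexed by $F$.

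The main obstacle, and the step that explains the index function, is the combinatorial evaluation of $(\det C_F)^2$. I expect that $\det C_F\neq 0$ precisely when $F$ is a basis of the signed cographic matroid $\calM^*(\tGa/\Ga)$ — equivalently, when $E_{\ud}(\Ga)\setminus F$ is a basis of $\calM(\tGa/\Ga)$ — and that in that case $(\det C_F)^2=4^{\,c(F)-1}=i(F)$, where $c(F)$ is the number of connected components of $\Ga\setminus F$. Granting this, reindexing the sum by $B=E_{\ud}(\Ga)\setminus F$ turns it into $\sum_{B\in\calB(\calM(\tGa/\Ga))}i(B)\,w(B)=w_0(\tGa/\Ga)$ by~\eqref{eq:ogodweight} and~\eqref{eq:zerothweightPrym}, which is~\eqref{eq:I0Prym}. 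The non-vanishing criterion is Zaslavsky's characterization~\cite{1982Zaslavsky} of the bases of the signed graphic matroid (a spanning subgraph each of whose components is unbalanced, i.e. has connected preimage in $\tGa$), and the value $4^{c(F)-1}$ is a signed matrix--tree theorem in which each unbalanced component beyond the first contributes a factor of $2$ to $|\det C_F|$. Carefully tracking these powers of two — through the pushforward and through the passage to the span of the $\gamma-\iota_*\gamma$ — is the delicate part, and it is exactly the computation carried out in~\cite{2022LenZakharov} for free covers and extended to all double covers in~\cite{2024GhoshZakharov}; see also~\cite{2023RoehrleZakharov} for the dictionary between $\calM(\tGa/\Ga)$ and $\Prym(\tGa/\Ga)$. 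An alternative route, which reduces to the same identity, deduces the formula from the isogeny $\Prym_c(\tGa/\Ga)\times\Jac(\Ga)\sim\Jac(\tGa)$ together with the ordinary matrix--tree theorem applied to $\tGa$ and $\Ga$.
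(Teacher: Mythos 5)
Your proposal is correct in outline but takes a genuinely more computational route than the paper for the core of the argument. Both proofs begin with the same reduction: contract the dilated edges using Proposition~\ref{prop:dilatededgesPrym}, which changes neither $\Prym_c(\tGa/\Ga)$ nor $\calM(\tGa/\Ga)$, so that only edge-free covers need be treated. From there the paper does essentially no computation: it quotes Theorem~3.4 of~\cite{2022LenZakharov} for \emph{free} covers, and handles the remaining edge-free covers by a degeneration trick --- attach a loop at each dilated vertex (with two parallel preimage edges) to produce a free cover whose contraction along the new loops is the given one, then pass to the limit of both sides of~\eqref{eq:I0Prym} using Proposition~\ref{prop:Prymlimit} together with the compatibility of matroid contraction with $w_0$. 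You instead set up the Gram determinant of $\Lambda'=\operatorname{Im}(\Id-\iota_*)$ with the halved integration pairing and expand by Cauchy--Binet; your bookkeeping of the factors of two (anti-invariance doubling each undilated edge's contribution, cancelled by the $1/2$ in $\zeta$) is right, and reindexing the cographic bases $F$ by their complements does recover $w_0(\tGa/\Ga)$. The one step you leave as an expectation --- that $\det C_F=\pm 2^{c(F)-1}$ for $F\in\calB(\calM^*(\tGa/\Ga))$ and $0$ otherwise --- is exactly the content of the cited free-cover computation, but note that for edge-free covers the bases of $\calM^*(\tGa/\Ga)$ also include sets $F$ for which some components of $\Ga\backslash F$ are trees with connected dilation subgraph, a case not covered by~\cite{2022LenZakharov}; you would either have to prove the determinant identity in that case directly or fall back on the paper's loop-attachment limit. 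A second small caution: \cite{2024GhoshZakharov} computes the volume of the \emph{divisorial} Prym, which differs from $\Prym_c(\tGa/\Ga)$ by the degree $2^{d(\tGa/\Ga)-1}$ isogeny when the dilation subcurve is disconnected, so it cannot be cited verbatim for~\eqref{eq:I0Prym}; your direct Gram computation, being set up for the continuous Prym, sidesteps this if carried through. What your route buys is a self-contained argument valid in all genera without the limiting step; what the paper's route buys is that only the free-cover determinant evaluation ever has to be verified.
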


\begin{proof} For a free double cover, this result is Theorem 3.4 in~\cite{2022LenZakharov}. The result for arbitrary double covers can be obtained by edge contraction. Suppose that $\pi:\tGa\to\Ga$ is edge-free. To each dilated vertex $v\in V(\Ga)$ we attach a loop $e$, and replace the preimage vertex $\pi^{-1}(v)$ with a pair of vertices $\widetilde{v}^{\pm}$ connected by a pair of parallel edges $\widetilde{e}^{\pm}$ mapping to $e$. The result is a free double cover $\pi':\tGa'\to \Ga'$ whose contraction along the new loops is $\pi$. By Proposition~\ref{prop:Prymlimit}, the limit of $I_0(\tGa'/\Ga')$ is equal to $I_0(\tGa/\Ga)$. On the other hand, the signed graphic matroid $\calM(\tGa/\Ga)$ is the contraction of the matroid $\calM(\tGa'/\Ga')$ along the set of the new loops, hence $w_0(\tGa/\Ga)$ is the limit of $w_0(\tGa'/\Ga)$. This proves the result for edge-free double covers. Finally, for an arbitrary double cover, contracting the set of dilated edges does not change the zeroth moment by Proposition~\ref{prop:dilatededgesPrym}, and does not change the matroid (and hence $w_0(\tGa/\Ga)$) by definition. Hence the general case follows from the case of free double covers. 
    
\end{proof}

\section{The second moment of the tropical Prym variety in $g\leq 4$}

In this section, we state and prove our main result, which computes the second moment $I_2(\Prym(\tGa/\Ga))$ of the tropical Prym variety of any double cover $\tGa\to \Ga$ with $g(\Ga)\leq 4$. We will see that $I_2(\Prym(\tGa/\Ga))$ is a sum of two terms: a polynomial contribution $p(\tGa/\Ga)$ that is a direct generalization of the formula~\eqref{eq:I2Jac} for $I_2(\Jac(\Ga))$, with the graphic matroid $\calM(\Ga)$ replaced by the signed graphic matroid $\calM(\tGa/\Ga)$, and a piecewise-polynomial term $q(\tGa/\Ga)$ that does not appear for Jacobians. The presence of this term reflects the fact that the polyhedral type of the Voronoi polytope of $\Prym(\tGa/\Ga)$, which is the domain of integration for $I_2(\Prym(\tGa/\Ga))$, depends, in general, on the edge lengths of $\Ga$ (see Section~\ref{sec:PrymTorelli}). 

\begin{theorem} Let $\pi:\tGa\to \Ga$ be a double cover of tropical curves, where $g=g(\Ga)\leq 4$. The second moment of the tropical Prym variety $\Prym(\tGa/\Ga)$ is equal to
\begin{equation}
I_2(\Prym(\tGa/\Ga))=\frac{p(\tGa/\Ga)+q(\tGa/\Ga)}{12\sqrt{w_0(\tGa/\Ga)}}.
\label{eq:I2Prym}
\end{equation}
Here $p(\tGa/\Ga)$ is a degree $g$ polynomial function in the edge lengths of $\Ga$ given by Equation~\eqref{eq:pPrym}, and $q(\tGa/\Ga)$ is a degree $g$ piecewise-polynomial function in the edge lengths of $\Ga$ defined in Section~\ref{subsec:pp} and in the Appendix.
\label{thm:main}
\end{theorem}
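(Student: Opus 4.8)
The plan is to reduce the computation of $I_2(\Prym(\tGa/\Ga))$ to a finite, algorithmic calculation by exploiting the trigonal construction and the matroidal formula for Jacobians. First I would observe that by Proposition~\ref{prop:Prymlimit} and Proposition~\ref{prop:dilatededgesPrym}, it suffices to establish the formula~\eqref{eq:I2Prym} for \emph{free} double covers $\pi:\tGa\to\Ga$, since both sides are continuous in the edge lengths of $\Ga$ and compatible with the contraction of dilated edges, and $w_0(\tGa/\Ga)$ behaves well under contraction by the proof of Proposition~\ref{prop:I0Prym}. So we assume $\pi$ is free. Next, since $g(\Ga)\leq 4$, Theorem~\ref{thm:CD} (together with Proposition~\ref{prop:2g+2n-5}) provides a trigonal structure $\ph:\Ga'\to\De$ on $\Ga$ with the target tree $\De$ having at most $2g+1\leq 9$ edges. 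We then apply the tropical trigonal construction, Theorem~\ref{thm:trigonal}, to obtain a tetragonal metric graph $\Pi$ with $g(\Pi)=g(\Ga)-1$ and a canonical isomorphism of tropical ppavs $\Prym_c(\tGa/\Ga)\simeq\Jac(\Pi)$. Consequently $I_2(\Prym(\tGa/\Ga))=I_2(\Jac(\Pi))$, and we may apply the matroidal formula of Proposition~\ref{prop:I2Jac} to the right-hand side.

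The core of the argument is then bookkeeping: one enumerates, up to isomorphism, all combinatorial types of free double covers $p:\tG\to G$ over stable graphs $G$ of genus $g\leq 4$ (equivalently, all cones of $\calR_g^{\trop}$ of the relevant dimension), and for each one chooses an explicit trigonal structure $\ph$, runs the trigonal construction to produce the model of $\Pi$ as a graph whose edge lengths are explicit piecewise-linear functions of the edge lengths $\ell(e)$ of $\Ga$ (the piecewise-linearity arises because the combinatorial type of $\Pi$, i.e.\ which sections $D\in\widetilde\Pi$ become vertices, can jump as the $\ell(e)$ vary), computes the graphic matroid $\calM(\Pi)$ and applies~\eqref{eq:I2Jac} to get $I_2(\Jac(\Pi))$ as a piecewise-polynomial function of the $\ell(e)$, and finally re-expresses the answer in terms of the data $(\calM(\tGa/\Ga),\ell,i)$ using the dictionary between $\calM(\Pi)$ and the signed graphic matroid established in~\cite{2023RoehrleZakharov}. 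The consistency of the answer across different choices of trigonal structure, and the independence of the chosen model, follow from the fact that $\Prym(\tGa/\Ga)$ and hence $I_2(\Prym(\tGa/\Ga))$ are intrinsic. This is precisely the content of the Sage implementation referred to in the introduction: the output is the explicit polynomial $p(\tGa/\Ga)$ of~\eqref{eq:pPrym} and the piecewise-polynomial correction $q(\tGa/\Ga)$ tabulated in Section~\ref{subsec:pp} and the Appendix. Finally, having proved the formula for free double covers, we extend it to all edge-free, and hence all, double covers of genus $\leq 4$ by taking limits as above.

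The main obstacle I expect is twofold. First, the sheer combinatorial explosion: the number of free double covers over genus-$\leq 4$ graphs, each requiring a choice of trigonal structure and an explicit trigonal construction, is large enough that the computation is only feasible by machine; the conceptual content is in organizing the enumeration and verifying that the trigonal construction has been correctly implemented (this is why the paper delegates it to Sage). Second, and more subtly, the piecewise-polynomial term $q(\tGa/\Ga)$ has no analogue for Jacobians and must be shown to depend only on the intrinsic data $(\calM(\tGa/\Ga),\ell,i)$ and not on the auxiliary trigonal structure; the walls across which $q$ is non-smooth are exactly the loci where the polyhedral type of $\Vor(\Prym(\tGa/\Ga))$ changes, and identifying these walls intrinsically — as opposed to in terms of the incidental combinatorics of a particular $\Pi$ — requires care. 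Establishing that the two presentations of $I_2$ coming from two different trigonal structures agree, which is forced by Theorem~\ref{thm:trigonal} but must be checked on the nose to trust the tabulated formulas, is the natural place where a subtle error could hide.
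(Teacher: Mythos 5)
Your proposal follows essentially the same route as the paper: reduce to free double covers by contraction and continuity, convert $\Prym(\tGa/\Ga)$ into $\Jac(\Pi)$ via the tropical trigonal construction, apply the matroidal formula of Proposition~\ref{prop:I2Jac}, and verify the resulting piecewise-polynomial identity by a finite machine enumeration. The one point where the paper is more careful than your sketch is the organization of that enumeration: a single combinatorial double cover $\tG\to G$ generally requires a whole fan of combinatorial trigonal structures $G\to T$ (the subcones $R_{\tG\to G\to T}$), and the paper makes the enumeration finite and complete by showing that the cones attached to \emph{generic} trigonal structures (trivalent target tree with exactly $2g+2n-5$ edges, independent induced edge lengths) are dense, so that continuity of both sides finishes the argument — whereas your proposal attributes the piecewise behavior to jumps in the combinatorial type of $\Pi$ rather than to the change of trigonal structure itself.
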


\subsection{The polynomial term} We recall that the zeroth moment $I_0(\Prym(\tGa/\Ga))$ of the tropical Prym variety is a fraction~\eqref{eq:I0Prym} with a polynomial numerator, whose terms correspond to the bases of the signed graphic matroid $\calM(\tGa/\Ga)$, weighted by an appropriate index function. The formula~\eqref{eq:I2Jac} for the second moment $I_2(\Jac(\Ga))$ of a tropical Jacobian involves summation over the spanning trees (the bases of the graphic matroid $\calM(\Ga)$), the edges of $\Ga$, and the spanning 2-forests (independent sets of $\calM(\Ga)$ having one fewer element than a basis). We consider the analogous objects for the signed graphic matroid, and extend the index function appropriately.

\begin{definition} Let $\tGa\to \Ga$ be a double cover and let $e\in E_{\ud}(\Ga)$ be an undilated edge. The \emph{index} $i(e)$ is equal to
\[
i(e)=\begin{cases}
    4, & e\in F\mbox{ for some }F\in \calB(\calM^*(\tGa/\Ga))\mbox{ and }
    i(F)\geq 4\mbox{ for all }e\in F\in \calB(\calM^*(\tGa/\Ga)),\\
    1, & \mbox{otherwise.}
\end{cases}
\]
Equivalently, $i(e)=4$ if $e$ is a bridge and both connected components of $\Ga\backslash\{e\}$ are unbalanced, and $i(e)=1$ otherwise. We do not assign indices to dilated edges. 
    
\end{definition}


We recall that the rank of $\calM(\tGa/\Ga)$ is the number $v_{\ud}$ of undilated vertices of $\Ga$, and consider the set $\calI_{v_{\ud}-1}(\tGa/\Ga)$ of independent sets of $\calM(\tGa/\Ga)$ having one fewer element than a basis (the analogue of a 2-spanning forest for the signed graphic matroid). For $F\in\calI_{v_{\ud}-1}(\tGa/\Ga)$, we define the \emph{cut set} as
\[
\partial F=\{e\in E_{\ud}(\Ga)\backslash\{e\}:F\cup\{e\}\in \calB(\calM(\tGa/\Ga))\}.
\]
We now define the \emph{weight} and \emph{index} of $F\in\calI_{v_{\ud}-1}(\tGa/\Ga)$ as
\[
w(F)=\prod_{e\in E_{\ud}(\Ga)\backslash F}e,\quad
i(F)=\min_{e\in \partial F}i(F\cup\{e\})i(e),
\]
and define the polynomial $p(\tGa/\Ga)$ by analogy with $p(\Ga)$ (see~\eqref{eq:I2Jac}):
\begin{equation}
p(\tGa/\Ga)=2w_0(\tGa/\Ga)\sum_{e\in E_{\ud}(\Ga)}ei(e)-
\sum_{T\in \calB((\calM(\tGa/\Ga))}i(T)w(T)\sum_{e\in E_{\ud}(\Ga)\backslash T}ei(e)-
\sum_{F\in\calI_{v_{\ud}-1}(\tGa/\Ga)}i(F)w(F).
\label{eq:pPrym}
\end{equation}

\subsection{The piecewise-polynomial term} \label{subsec:pp} The presence of a piecewise-polynomial term $p(\tGa/\Ga)$ reflects the fact that the polyhedral type of the Voronoi polytope of $\Prym(\tGa/\Ga)$ depends, in general, on the edge lengths of $\Ga$. Furthermore, for a given double cover $\tGa\to \Ga$ the Voronoi polytope changes type if and only if the double cover $\tGa\to \Ga$ can be contracted to an $FS_n$-cover for some $n\geq 2$. Hence the form of $q(\tGa/\Ga)$ depends on the number of and relationship between such contractions. 

Let $\pi:\tGa\to \Ga$ be a double cover of tropical curves. A set of edges $\{e_1,\ldots,e_n\}\subset E_{\ud}(\Ga)$ is called an \emph{$FS_n$-set} if the double cover obtained by contracting all other edges of $\Ga$ is the $FS_n$-cover, in other words if the following conditions hold (cf.~Remark 6.2 in~\cite{2017Casalaina-MartinGrushevskyHulekLaza}):
\begin{enumerate}
    \item $\Ga\backslash \{e_1,\ldots,e_n\}$ consists of two unbalanced connected components.
    \item $\Ga\backslash F$ is connected for any proper subset $F\subset \{e_1,\ldots,e_n\}$.
\end{enumerate}
We note that this definition is purely matroidal: a set $F=\{e_1,\ldots,e_n\}$ is an $FS_n$-set if any basis of $\calM^*(\tGa/\Ga)$ containing $F$ has index at least 4 and if no proper subset of $F$ has this property. We observe that $n\leq g(\Ga)-1$. 

We now describe $q(\tGa/\Ga)$ for $g(\Ga)\leq 4$ in terms of the $FS_n$-sets of $\Ga$. To simplify the exposition, we give the formula for $q(\tGa/\Ga)$ only for free double covers; the formula for an arbitrary double cover may be obtained by appropriate edge contractions. 

\subsubsection*{$g(\Ga)=2$} There are no $FS_n$-sets for any $n\geq 2$, and
\[
q(\tGa/\Ga)=0.
\]

\subsubsection*{$g(\Ga)=3$} In this case $\Ga$ may have an $FS_2$-set. If there are none, then
\[
q(\tGa/\Ga)=0.
\]
If $\Ga$ has an $FS_2$-set $\{e,f\}\subset E_{\ud}(\Ga)$, it is necessarily unique and the graph $\Ga$ has the following form:
\begin{center}
\begin{tikzpicture}
    \draw[thin] (0,0) .. controls (1,0.5) and (2,0.5) .. (3,0);
    \draw[thin] (0,0) .. controls (1,-0.5) and (2,-0.5) .. (3,0);
    \node at (1.5,0.6) {$e$};
    \node at (1.5,-0.6) {$f$};
    \draw[fill,white] (0,0) circle(0.5);
    \draw[fill,white] (3,0) circle(0.5);
    \draw[thin] (0,0) circle(0.5);
    \draw[thin] (3,0) circle(0.5);
    \node at (0,0) {$\Ga_1$};
    \node at (3,0) {$\Ga_2$};
\end{tikzpicture}
\end{center}
Here $\Ga_1$ and $\Ga_2$ are unbalanced graphs of genus one, in other words, one of the following:
\begin{center}
    \begin{tikzpicture}
        \draw[thin] (0,0) -- (1,0);
        \draw[thin] (-0.5,0) circle(0.5);
        \draw[thin] (1.4,0.4) -- (1,0) -- (1.4,-0.4);
        \draw[fill] (0,0) circle(0.07);
        \draw[fill] (1,0) circle(0.07);
        \draw[thin] (4,0.5) .. controls (3.8,0.3) and (3.8,-0.3) .. (4,-0.5);
        \draw[thin] (4,0.5) .. controls (4.2,0.3) and (4.2,-0.3) .. (4,-0.5);
        \draw[thin] (4,0.5) -- (4.5,0.5);
        \draw[thin] (4,-0.5) -- (4.5,-0.5);
        \draw[fill] (4,0.5) circle(0.07);
        \draw[fill] (4,-0.5) circle(0.07);
    \end{tikzpicture}
\end{center}
The term $q(\tGa/\Ga)$ is given by the following piecewise-polynomial function:
\begin{equation}
q(\tGa/\Ga)=p_2(e,f)=\begin{cases}
    4e^2(3f-e),& e\leq f,\\
    4f^2(3e-f),& f\leq e.
\end{cases}
\label{eq:p2}
\end{equation}
The cone of edge lengths of $\Ga$ splits into two subcones $e\leq f$ and $f\leq e$, on each of which $q(\tGa/\Ga)$ is a polynomial.

\subsubsection*{$g(\Ga)=4$} In this case $\Ga$ may have up to three $FS_2$-sets and up to three $FS_3$-sets, and the piecewise-polynomial function $q(\tGa/\Ga)$ is given by progressively more complex expressions in terms of the $FS_n$-sets. To simplify the exposition, the explicit formulas for $q(\tGa/\Ga)$ are listed in the Appendix.

\subsection{Conjectural formulas} Motivated by Theorem~\ref{thm:main}, and the explicit formulas for $q(\tGa/\Ga)$ presented in the Appendix, we give conjectural formulas for the second moment of the tropical Prym variety for two classes of double covers. The bold reader is welcome to use these formulas to make further conjectures at their own peril.

\begin{conjecture} Let $\pi:\tGa\to \Ga$ be a double cover of tropical curves, and assume that $i(F)=1$ for all $F\in \calB(\tGa/\Ga)$, or equivalently that $\Ga$ has no $FS_n$-sets for any $n\geq 2$. The second moment of the tropical Prym variety $\Prym(\tGa/\Ga)$ is equal to
\[
I_2(\Prym(\tGa/\Ga))=\frac{p(\tGa/\Ga)}{12\sqrt{w_0(\tGa/\Ga)}},
\]
where $p(\tGa/\Ga)$ is given by Equation~\eqref{eq:pPrym}.
\end{conjecture}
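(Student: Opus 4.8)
The plan is to prove the identity directly from the polyhedral description of the continuous Prym variety, bypassing the trigonal construction, which is unavailable once $g(\Ga)\geq 5$. First I would reduce to the case of a free double cover. By Proposition~\ref{prop:dilatededgesPrym} we may contract the dilated edges of $\Ga$ without changing $I_2(\Prym_c(\tGa/\Ga))$, the signed graphic matroid, or the hypothesis, reducing to an edge-free cover; and by Proposition~\ref{prop:Prymlimit} an edge-free cover is a limit of the free covers obtained by replacing each isolated dilation point by an unbalanced loop (whose preimage is a bigon), along which both sides of the claimed identity vary continuously and the hypothesis ``$\Ga$ has no $FS_n$-set for $n\geq 2$'' is preserved, being matroidal. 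Since both sides are homogeneous of the same degree $t(\tGa/\Ga)/2+1$ in the edge lengths, it suffices to treat free double covers.

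For a free double cover the continuous Prym variety is the torus attached to the lattice $\Lambda'=\Im\iota_*\subset H_1(\tGa,\ZZ)$ with the restricted integration pairing; in a fixed basis of $\Lambda'$ its Gram matrix is a linear function of the edge lengths $\ell\in\RR_{>0}^{E_{\ud}(\Ga)}$ whose coefficients are recorded by $\calM(\tGa/\Ga)$. The crucial step is to show that, under the hypothesis, this Gram matrix lies in a single secondary cone of the second Voronoi decomposition of the cone of positive definite forms on $\Lambda'_\RR$, so that the combinatorial type of the Voronoi polytope of $X=\Prym_c(\tGa/\Ga)$ is constant over the whole orthant of edge lengths. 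The input is the fact recalled in Section~\ref{subsec:pp}: the Voronoi type of $\Prym(\tGa/\Ga)$ changes exactly when $\pi$ can be contracted to an $FS_n$-cover with $n\geq 2$, i.e. exactly when $\Ga$ has an $FS_n$-set; so the hypothesis forces a single type. On that fixed type the integral $\int_{\Vor(X)}\|x\|^2\,dx$ is a universal polynomial in the entries of the Gram matrix, hence becomes a polynomial in $\ell$ of degree $t(\tGa/\Ga)+1$ divided by $\sqrt{w_0(\tGa/\Ga)}=I_0(\Prym_c(\tGa/\Ga))$. Establishing this cone-confinement statement — equivalently, that the Delaunay subdivision attached to the cographic realization of $\calM(\tGa/\Ga)$ is weight-independent precisely when no $FS_n$-set is present — is the main obstacle: it requires a structural analysis of which minors arise from signed graphs, generalizing the finite case-check carried out for $g(\Ga)\leq 4$, and is considerably subtler than the integration or the matroidal bookkeeping that follows.

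It then remains to identify the resulting polynomial with $p(\tGa/\Ga)$ of~\eqref{eq:pPrym}. For this I would introduce a Prym analogue of the $\tau$-invariant of~\cite{2023deJongShokrieh}, built from the Abel--Prym map and the associated height pairing on $\tGa$, and prove a relation of the form $\tfrac12\tau(\tGa/\Ga)+I_2(\Prym_c(\tGa/\Ga))/I_0(\Prym_c(\tGa/\Ga))=\tfrac18\sum_{e\in E_{\ud}(\Ga)}i(e)\,\ell(e)$, in which the weighting $i(e)$ records the doubled contribution of a bridge whose removal splits $\Ga$ into two unbalanced pieces. Computing $\tau(\tGa/\Ga)$ by effective-resistance and spanning-forest techniques, mimicking the proof of Proposition~\ref{prop:I2Jac} but now summing over the bases $T\in\calB(\calM(\tGa/\Ga))$ and the near-bases $F\in\calI_{v_{\ud}-1}(\tGa/\Ga)$ of the signed graphic matroid weighted by the corresponding indices, should reproduce the three terms of~\eqref{eq:pPrym}. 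Finally, the identity extends from free to arbitrary double covers with no $FS_n$-set by the limiting argument of the first paragraph together with Proposition~\ref{prop:I0Prym}.
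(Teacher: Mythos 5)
This statement is left as a \emph{conjecture} in the paper --- there is no proof to compare against --- so I am assessing your proposal on its own terms, and as written it is a research program rather than a proof. The first half is sound: the reduction to free double covers via Propositions~\ref{prop:dilatededgesPrym} and~\ref{prop:Prymlimit} is exactly the reduction the paper itself uses elsewhere, and the ``cone-confinement'' step, while you flag it as the main obstacle, is arguably already available: the paper attributes to~\cite{2002AlexeevBirkenhakeHulek} the fact that the Voronoi polytope of $\Prym_c(\tGa/\Ga)$ fails to be normally constant on a cone $R_{\tG/G}$ precisely when $\tG\to G$ degenerates to an $FS_n$-cover with $n\geq 2$, and ``degenerates to $FS_n$'' is the same as ``has an $FS_n$-set.'' Granting that, you correctly conclude that $I_2$ is a single polynomial in the edge lengths divided by $I_0=\sqrt{w_0(\tGa/\Ga)}$ on the whole orthant, with the right homogeneity.

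The genuine gap is the second half: identifying that polynomial with $p(\tGa/\Ga)$ of~\eqref{eq:pPrym}, which is the actual content of the conjecture. Here you propose to ``introduce a Prym analogue of the $\tau$-invariant,'' ``prove a relation of the form $\tfrac12\tau(\tGa/\Ga)+I_2/I_0=\tfrac18\sum_e i(e)\ell(e)$,'' and assert that an effective-resistance computation ``should reproduce the three terms'' --- but none of this is argued. The proposed relation is reverse-engineered from the desired answer, and note that under the hypothesis of the conjecture every index $i(e)$, $i(T)$, $i(F)$ equals $1$, so the $i(e)$-weighting you build into the relation is vacuous in the only case you need. The Jacobian analogue of this step rests on the spanning-tree and $2$-forest identities of~\cite{2023RichmanShokriehWu}; their Prym counterparts --- expressing the relevant trace as a ratio of basis and near-basis generating polynomials of the signed graphic matroid $\calM(\tGa/\Ga)$ --- do not exist in the literature and are not supplied here. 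Until that combinatorial identity is proved, your argument establishes only that $12\, I_2(\Prym(\tGa/\Ga))\sqrt{w_0(\tGa/\Ga)}$ is \emph{some} polynomial in the edge lengths, not that it equals $p(\tGa/\Ga)$.
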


\begin{conjecture} Let $\pi:\tGa\to \Ga$ be a double cover of tropical curves having a unique $FS_2$-set $\{e,f\}\subset \Ga$ and no $FS_n$-sets for any $n\geq 3$. Let $\Ga_1$ and $\Ga_2$ be the connected components of $\Ga\backslash\{e,f\}$ and let $\Ga'_1\to \Ga_1$ and $\Ga'_2\to \Ga_2$ be the associated double covers. The second moment of the tropical Prym variety $\Prym(\tGa/\Ga)$ is equal to
\[
I_2(\Prym(\tGa/\Ga))=\frac{p(\tGa/\Ga)+q(\tGa/\Ga)}{12\sqrt{w_0(\tGa/\Ga)}},
\]
where $p(\tGa/\Ga)$ is given by Equation~\eqref{eq:pPrym} and $q(\tGa/\Ga)$ is the piecewise-polynomial function
\[
q(\tGa/\Ga)=p_2(e,f)w_0(\tGa'_1/\Ga'_1)w_0(\tGa'_2/\Ga'_2).
\]
\end{conjecture}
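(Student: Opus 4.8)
The plan is to prove a \emph{gluing formula} for the second moment of the Prym variety adapted to the decomposition of $\Ga$ induced by the unique $FS_2$-set $\{e,f\}$, and to show that the entire piecewise-polynomial contribution comes from a single distinguished rank-$2$ quotient of the Prym lattice. Since $\Ga$ need not be trigonal in arbitrary genus, the trigonal construction (Theorem~\ref{thm:trigonal}) is unavailable and the argument must be carried out directly on the Prym lattice. By Propositions~\ref{prop:dilatededgesPrym} and~\ref{prop:Prymlimit} I first reduce to the case of a free double cover, so that $\pi^{-1}(\Ga_i)=\Ga'_i$ is connected for $i=1,2$ (because each $\Ga_i$ is unbalanced) and $\tGa$ is obtained by joining $\Ga'_1$ and $\Ga'_2$ along the four lifts $e^{\pm},f^{\pm}$ of $e$ and $f$.

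Next I would analyze the Prym lattice $\Lambda'=\Im\iota_*\subset H_1(\tGa,\ZZ)$. The anti-invariant cycles supported on $\Ga'_1$ and on $\Ga'_2$ span the sub-Prym lattices $\Lambda'_1,\Lambda'_2$ of $\Prym(\Ga'_1/\Ga_1),\Prym(\Ga'_2/\Ga_2)$, of ranks $g(\Ga_1)-1$ and $g(\Ga_2)-1$; a rank count gives $t(\tGa/\Ga)=g(\Ga)-1=\bigl(g(\Ga_1)-1\bigr)+\bigl(g(\Ga_2)-1\bigr)+2$, so $\Lambda'_1\oplus\Lambda'_2$ has corank $2$ in $\Lambda'$, with quotient $\Lambda'_0=\Lambda'/(\Lambda'_1\oplus\Lambda'_2)$ of rank $2$. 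Since a cycle supported on $\Ga'_1$ pairs, under the integration pairing~\eqref{eq:integrationpairing}, only with a cycle sharing its edges, $\Lambda'_1$ and $\Lambda'_2$ are mutually orthogonal. The key structural claim is that $\Lambda'_1\oplus\Lambda'_2$ splits off \emph{orthogonally}: the two ``new'' generators, built from $e^{\pm},f^{\pm}$ together with routing paths through $\Ga'_1$ and $\Ga'_2$, can be projected onto $(\Lambda'_1\oplus\Lambda'_2)^{\perp}$, and the resulting rank-$2$ quotient metric on $\Lambda'_0$ is the Gram matrix of an $FS_2$-type Prym, with the routing lengths entering only as additive (polynomial) corrections. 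The hypotheses that $\{e,f\}$ is the unique $FS_2$-set and that there are no $FS_n$-sets for $n\geq 3$ should guarantee that all piecewise behavior is concentrated in $\Lambda'_0$, with a single wall along $e=f$.

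Granting the orthogonal splitting, the heart of the argument is a Fubini decomposition: $\Vor(\Lambda')$ fibers over $\Vor(\Lambda'_0)$ (in the quotient metric) with fibers isometric to $\Vor(\Lambda'_1\oplus\Lambda'_2)$, and the identity $\|x\|^2=\|x_{\parallel}\|^2+\|x_{\perp}\|^2$ for the orthogonal projections yields
\[
I_2(\Prym(\tGa/\Ga))=\Vol(\Lambda'_0)\,I_2(\Lambda'_1\oplus\Lambda'_2)+\Vol(\Lambda'_1\oplus\Lambda'_2)\,I_2(\Lambda'_0).
\]
I would then expand each factor using the product rules for tropical moments together with Proposition~\ref{prop:I0Prym}: $\Vol(\Lambda'_1\oplus\Lambda'_2)=\sqrt{w_0(\Ga'_1/\Ga_1)\,w_0(\Ga'_2/\Ga_2)}$, the moment $I_2(\Lambda'_1\oplus\Lambda'_2)$ splits as a volume-weighted sum of $I_2(\Prym(\Ga'_i/\Ga_i))$, and the rank-$2$ computation gives $I_2(\Lambda'_0)=\bigl(p^{(0)}+p_2(e,f)\bigr)/12\sqrt{w_0^{(0)}}$, whose piecewise part is exactly the genus-$3$ expression~\eqref{eq:p2} and depends only on $e,f$. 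Finally I would verify the multiplicativity $w_0(\tGa/\Ga)=w_0^{(0)}\,w_0(\Ga'_1/\Ga_1)\,w_0(\Ga'_2/\Ga_2)$, which follows from $\calM(\tGa/\Ga)$ being the matroid $2$-sum of $\calM(\Ga'_1/\Ga_1)$, $\calM(\Ga'_2/\Ga_2)$ and the $FS_2$-matroid. Substituting, the piecewise contribution $\Vol(\Lambda'_1\oplus\Lambda'_2)\cdot p_2(e,f)/12\sqrt{w_0^{(0)}}$ simplifies to $p_2(e,f)\,w_0(\Ga'_1/\Ga_1)w_0(\Ga'_2/\Ga_2)/12\sqrt{w_0(\tGa/\Ga)}$, which is precisely $q(\tGa/\Ga)/12\sqrt{w_0(\tGa/\Ga)}$; all remaining polynomial contributions must then be reassembled into $p(\tGa/\Ga)$ as in~\eqref{eq:pPrym}.

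The main obstacle is the structural claim of the second paragraph: that $\Lambda'_1\oplus\Lambda'_2$ is not merely orthogonal but \emph{Voronoi-compatible}, so that the polytope genuinely fibers, and that the quotient metric on $\Lambda'_0$ produces a wall only along $e=f$ with the routing lengths relegated to $p^{(0)}$. Establishing Voronoi-compatibility of a non-orthogonally-generated sublattice is delicate in general; here the absence of $FS_n$-sets for $n\geq 3$ is exactly what should exclude further walls, and I expect the cleanest route is to prove that any change of combinatorial type of $\Vor(\Lambda')$ forces $\tGa/\Ga$ to contract to some $FS_n$-cover, as asserted in Section~\ref{subsec:pp}. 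The concluding step, matching the polynomial remainder against~\eqref{eq:pPrym}, is routine but lengthy and would be organized by tracking how the bases and near-bases of $\calM(\tGa/\Ga)$ distribute among the three matroid factors.
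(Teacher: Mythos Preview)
The statement you are attempting to prove is labeled a \emph{Conjecture} in the paper, and the paper offers no proof of it: it is extracted as a pattern from the explicit genus $\leq 4$ computations of Theorem~\ref{thm:main}, introduced with the caveat that ``the bold reader is welcome to use these formulas to make further conjectures at their own peril.'' There is therefore no argument in the paper to compare your proposal against.

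As a proof, your sketch has a genuine gap, and it sits exactly where you yourself locate it. The orthogonal decomposition $\Lambda'\cong(\Lambda'_1\oplus\Lambda'_2)\oplus\Lambda'_0$ does not hold at the lattice level: the two ``new'' anti-invariant cycles necessarily traverse routing paths inside $\Ga'_1$ and $\Ga'_2$, so under the integration pairing~\eqref{eq:integrationpairing} they pair nontrivially with elements of $\Lambda'_1$ and $\Lambda'_2$. Orthogonal projection onto $(\Lambda'_1\oplus\Lambda'_2)^{\perp}$ does not, in general, land in the lattice, so there is no rank-$2$ sublattice orthogonal to the rest. One may still form the abstract quotient $\Lambda'/(\Lambda'_1\oplus\Lambda'_2)$ with the induced metric, but then $\Vor(\Lambda')$ does \emph{not} fiber over the Voronoi polytope of the quotient with fibers $\Vor(\Lambda'_1\oplus\Lambda'_2)$; that fibration is valid only for genuine orthogonal direct sums of lattices. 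Consequently your Fubini identity
\[
I_2(\Prym(\tGa/\Ga))=\Vol(\Lambda'_0)\,I_2(\Lambda'_1\oplus\Lambda'_2)+\Vol(\Lambda'_1\oplus\Lambda'_2)\,I_2(\Lambda'_0)
\]
has no a priori justification.

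Your proposed rescue --- arguing that every wall in the Voronoi fan of $\Lambda'$ comes from a contraction of $\tGa\to\Ga$ to some $FS_n$-cover --- is precisely the final Conjecture of Section~\ref{sec:PrymTorelli}, which is itself open; invoking it here is circular. The auxiliary claims are also not established: it is not clear that the quotient metric on $\Lambda'_0$ coincides with the Prym form of an $FS_2$-cover with parameters $e,f$ alone (the routing lengths enter the Gram matrix, not merely an additive correction), and the asserted factorization $w_0(\tGa/\Ga)=w_0^{(0)}\,w_0(\tGa'_1/\Ga_1)\,w_0(\tGa'_2/\Ga_2)$ must be reconciled with the index function, which takes the value $4$ on exactly those bases of $\calM(\tGa/\Ga)$ whose removal separates $\Ga_1$ from $\Ga_2$. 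You have correctly identified the shape an argument would need to take, but each load-bearing step remains conjectural --- which is consistent with the paper's own assessment.
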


\subsection{Proof of Theorem~\ref{thm:main}} We explain our calculations for $g=g(\Ga)=4$, the cases $g=2,3$ being substantially simpler. First, we note that, using the same reasoning as in Proposition~\ref{prop:I0Prym}, it is sufficient to prove Equation~\eqref{eq:I2Prym} for free double covers, since the general case reduces to the free case by edge contraction. 

Let $f:G\to T$ be a trigonal structure, where $G$ is a weighted graph of genus $4$, and let $G^{\st}$ be the stabilization of $G$. An assignment of edge lengths $\ell:E(T)\to \RR_{>0}$ promotes $f$ to a trigonal structure $\ph:\Ga\to \De$ of tropical curves, where the edge lengths of $\Ga$ and therefore $\Ga^{\st}$ are linear functions of the edge lengths of $\De$. We denote by $M_{G\to T}\subset \calM_4^{\trop}$ the set of metric graphs that are obtained by varying the edge lengths of $T$; this is a subcone of the cone  $M_{G^{\st}}$.

By Theorem~\ref{thm:CD}, every metric graph $\Ga'\in \calM_4$ admits a trigonal structure $\ph:\Ga\to \De$ (where $\Ga^{\st}=\Ga'$) and therefore lies in some cone $M_{G\to T}\subset \calM_4$. However, the combinatorial type of $\ph$ depends not only on the underlying graph $G'$ of $\Ga'$, but also on the relative values of the edge lengths. In other words, a cone $M_{G'}\subset \calM_4^{\trop}$, which parametrizes metric graphs with underlying graph $G'$, cannot in general be covered by a single cone $M_{G\to T}$, which parametrizes metric graphs together with a fixed trigonal structure.

We now fix a trigonal structure $f:G\to T$ and choose a connected free double cover $p:\tG\to G$. Varying the edge lengths of $T$, we obtain a cone $R_{\tG\to G\to T}\subset R_{\tG/G}$ in the moduli space of double covers $\calR_4^{\trop}$, parametrizing towers of the form $\tGa\to \Ga\to \De$. For any double cover $\pi:\tGa\to \Ga$ in $R_{\tG\to G\to T}$, we apply the trigonal construction (Theorem~\ref{thm:trigonal}) and obtain a tetragonal structure $\ph:\Pi\to \De$ such that $\Prym(\tGa/\Ga)$ is isomorphic to $\Jac(\Pi)$. We now compute the second moment  
\[
I_2(\Prym(\tGa/\Ga))=I_2(\Jac(\Pi))
\]
by using the matroidal formula in Proposition~\ref{prop:I2Jac}. Since the right hand side of~\eqref{eq:I2Jac} is polynomial in the edge lengths of $\Pi$, and since the latter are linear functions of the edge lengths of $\De$ and therefore those of $\Ga$, we obtain a polynomial expression for $I_2(\Prym(\tGa/\Ga))$ on the cone $R_{\tG\to G\to T}$. However, a cone $R_{\tG/G}\subset \calR_4^{\trop}$ cannot in general be covered by a single cone $R_{\tG\to G\to T}$, therefore the second moment is in general piecewise-polynomial on $R_{\tG/G}$.

We now use this method to verify Equation~\eqref{eq:I2Prym} by enumerating all towers $\tG\to G\to T$, where $p:\tG\to G$ is a free double cover of a graph $G$ of genus 4 and $f:G\to T$ is a trigonal structure. While this may seem to be a daunting task, it is sufficient to restrict our attention to double covers over trigonal structures satisfying the following properties, which we call \emph{generic}:

\begin{enumerate} \item The target tree $T$ has $2g+2n-5=9$ edges and furthermore is \emph{trivalent}, meaning $\val(v)\leq 3$ for all $v\in v(T)$.

\item The stabilization $G^{\st}$ of $G$ has $|E(G^{\st})|=3g-3=9$ edges.

\item The edge lengths of $G^{\st}$ (induced by $f$) are linearly independent functions of the edge lengths of $T$.
    
\end{enumerate}

We claim that the union of the cones $M_{G\to T}\subset \calM_4$ corresponding to generic trigonal structures is a dense subset of $\calM_4$, and that therefore the cones $M_{\tG\to G\to T}$ corresponding to free double covers of generic trigonal structures form a dense subset in the moduli space of free double covers. Since both sides of Equation~\eqref{eq:I2Prym} are continuous, it is therefore sufficient to verify the formula only for such double covers.

Let $f:G\to T$ be a trigonal structure. By Proposition~\ref{prop:2g+2n-5}, we can assume that $T$ has at most $2g+2n-5=9$ edges. If $T$ has fewer edges, or if the edge lengths of $G^{\st}$ are not linearly independent, then the locus $M_{G\to T}$ has dimension strictly less than $\dim \calM_4=3g-3=9$. Now suppose that $T$ is not trivalent. Let $a_i$ denote, as in the proof Proposition~\ref{prop:2g+2n-5}, the number of vertices of $T$ of valence $a_i$. Recall that we showed that
\[
\Ram(f)=2g+2n-2=12\geq 2a_1+a_2.
\]
If $a_i\geq 1$ for some $i\geq 4$, then
\[
|E(T)|=3|V(T)|-2|E(T)|-3=3(a_1+a_2+a_3+\cdots)-(a_1+2a_2+3a_3+\cdots)-3<2a_1+a_2-3\leq 9.
\]
Therefore a cone $M_{G\to T}\subset \calM_4$ corresponding to a non-generic trigonal structure has positive codimension. Since there are only finitely many such cones, it follows that the generic cones are dense in $\calM_4$.

We now outline how to enumerate all free double covers $\tG\to G\to T$ over generic trigonal structures, and how to verify~\eqref{eq:I2Prym} on the corresponding cones $R_{\tG\to G\to T}\subset \calR_4^{\trop}$.

\subsubsection*{List of trees} The first step is to create a list $\calT$ of trivalent trees with $9$ edges. There are 37 such trees, and each tree is oriented by ordering its vertices. 

\subsubsection*{Type markings} Each $T\in \calT$ is the target of a collection $f:G\to T$ of trigonal structures, which we encode using appropriate markings on $T$. We mark the edges and vertices of $T$ as \emph{type I}, \emph{type II}, or \emph{type III}, where the type of an edge or vertex is the number of its preimages in $G$. We then label the  preimages of the vertices as follows:
\begin{enumerate} \item If $v\in V(T)$ has type I, then $f^{-1}(v)=\{\tv_1\}$ with $d_f(\tv_1)=3$.
\item If $v\in V(T)$ has type II, then $f^{-1}(v)=\{\tv_1,\tv_2\}$ with $d_f(v_1)=1$ and $d_f(v_2)=2$.
\item If $v\in V(T)$ has type III, $f^{-1}(v)=\{\tv_1,\tv_2,\tv_3\}$ with $d_f(\tv_1)=d_f(\tv_2)=d_f(\tv_3)=1$.

\end{enumerate}
For coding purposes, it is convenient to introduce redundant labels $\tv_1=\tv_2=\tv_3$ for the preimages of a type I vertex and $\tv_2=\tv_3$ for the preimages of a type II vertex. We label the edges of $G$ similarly.

The number $3^{|E(T)|+|V(T)|}=3^{19}=1162261467$ of possible type markings of a given $T\in \calT$ is quite large, so we first derive a number of admissibility criteria. First, it is clear that the type of an edge is not less than the types of its root vertices. Now let $f:G\to T$ be a trigonal structure, then
\[
\Ram(f)=\deg(f)\chi(T)-\chi(G)=2g+4=12
\]
by the global Riemann--Hurwitz formula~\eqref{eq:globalRH}. We consider how the ramification degree is distributed among the vertices of $G$.

Let $v\in V(T)$ be an extremal vertex at the end of an edge $e\in E(T)$. We saw in the proof of Proposition~\ref{prop:2g+2n-5} that if the ramification degree of $f$ at each $\tv\in f^{-1}(v)$ is less than 2, then all edges of $G$ over $e$ are also extremal. If we equip $T$ with edge lengths, then the edge lengths of the stabilization of $G$ will not depend on $e$, and the resulting cone in $\calM_4$, and therefore $\calR_4$, will not have the maximum dimension. Similarly, let $v\in V(T)$ have $\val(v)=2$ and let $e_1,e_2\in E(T)$ be the edges at $v$. If $f$ is unramified at each $\tv\in f^{-1}(v)$, then all these $\tv$ also have valence 2 and can be removed once edge lengths are added, so the edge lengths of $G$ depend only on $e_1+e_2$ and the resulting locus is not of maximum dimension. 

We conclude, as in Proposition~\ref{prop:2g+2n-5}, that the total ramification degree is at least 2 over each $v\in V(T)$ with $\val(v)=2$ and 1 over each $v\in V(T)$ with $\val(v)=1$. Let $a_1, a_2, a_3$ be the number of vertices of $T$ of valency 1, 2, and 3, respectively, then $\Ram(f)\geq 2a_1+a_2$. On the other hand,
\[
a_1+a_2+a_3=|V(\Delta)|=2g+2=10,\quad a_1+2a_2+3a_3=4g+2=18,
\]
therefore $2a_1+a_2=2g+4=12=\Ram(f)$. The only way that this is possible is if the following conditions hold:

\begin{enumerate}
    \item Over each $v\in V(T)$ of valence 1 there is a unique vertex $\tv\in V(G)$ with $\Ram_f(\tv)=2$.
    \item Over each $v\in V(T)$ of valence 2 there is a unique vertex $\tv\in V(G)$ with $\Ram_f(\tv)=1$.
    \item $\Ram_f(\tv)=0$ at all other $\tv\in V(G)$. 

\end{enumerate}

An exhaustive enumeration yields the possible type combinations $(a; b_i)$ of a vertex and its attached edges:

\begin{enumerate}
    \item If $v$ is an extremal vertex on an edge $e$, then the possible types of $v$ and $e$ are $(\mathrm{I};\mathrm{I})$ and $(\mathrm{II};\mathrm{III})$. We note that if $v$ and $e$ are both type I, then $e$ and its image in $T$ can be contracted, hence the resulting cone will not have maximal dimension. 

    \item If $v$ lies on edges $e_1$ and $e_2$, then the possible types of $v$, $e_1$, and $e_2$ are $(\mathrm{I};\mathrm{I},\mathrm{II})$ and $(\mathrm{II};\mathrm{II},\mathrm{III})$.

    \item If $v$ is trivalent with root edges $e_1$, $e_2$, and $e_3$, then their possible types are $(\mathrm{I};\mathrm{I},\mathrm{I},\mathrm{III})$, $(\mathrm{I};\mathrm{I},\mathrm{II},\mathrm{II})$, $(\mathrm{II};\mathrm{II},\mathrm{II},\mathrm{III})$, and $(\mathrm{III};\mathrm{III},\mathrm{III},\mathrm{III})$. 
    
\end{enumerate}

We observe that the edge type markings on $T$ are highly constrained (for example, there are only four possible combinations at a trivalent vertex), and furthermore the vertex types are determined by the edge types. Going through the $3^9=19683$ possible edge type markings for each $T\in \calT$, we obtain a list $\calT'$ of $1184$ trees labeled with valid vertex and edge types. The trees in $\calT$ generally have large automorphism groups, and we do not check for redundant labelings.

\subsubsection*{Monodromy labels} Let $T\in \calT'$ be a tree with vertex and edge labels. To define a trigonal structure $g:G\to T$, we choose \emph{monodromy labels} $\sigma_e\in S_3$ for all $e\in E(T)$. We recall that the vertices $\{\tv_1,\tv_2,\tv_3\}$ over a given $v\in V(T)$ are reduntantly labeled, so $\tv_2=\tv_3$ if $v$ has type II and all vertices are the same if $v$ has type I; and the edges are labeled similarly. We now construct $G$ by attaching the oriented edges $\te_i$ over each $e\in E(T)$ using the monodromy labels, as if $f:G\to T$ were a free cover of degree 3:
\[
s(\te_i)=\widetilde{s(e)}_i,\quad t(\te_i)=\widetilde{s(e)}_{\sigma_e(i)}.
\]
The possible monodromy labels are strongly constrained by the types of $e$, $s(e)$, and $t(e)$, so we do not need to enumerate $6^{|E(T)|}=6^9=10077696$ choices. For example, $g_e=1$ is the only choice if either of the root vertices $s(e)$ or $t(e)$ has type I, and all six elements of $S_3$ are used only if $e$, $s(e)$, and $t(e)$ all have type III. At a type III trivalent vertex $v$ with three type III edges, we may rearrange the labels of $f^{-1}(v)$ and choose one of the monodromy elements of the incident edges to be trivial. The result is a list $\calT''$ of $12977$ trees $T$ with type and monodromy labels; again we do not take automorphisms of $T$ into account. 

\subsubsection*{Dimension count} Each $T\in \calT''$ is equipped with enough decorations to define a trigonal structure $f:G\to T$, where $g(G)=4$. We retain only those that are generic (and also check $G$ for connectedness), and obtain a list $\mathrm{Trig}_4$ of 821 generic trigonal structures of genus 4 graphs, so that the union of the corresponding cones $M_{G\to T}$ is dense in $\calM_4$. Many of these structures are in fact isomorphic, and we do not check for this. 

\subsubsection*{Double covers} For each trigonal structure $f:G\to T$ in $\mathrm{Trig}_4^{\mathrm{gen}}$ we now enumerate all $2^g-1=15$ connected free double covers $p:\tG\to G$ by choosing a spanning tree of $G$ and enumerating the nontrivial sign assignments on the complementary edges. We obtain a list $\mathrm{Trig}^2_4$ of $821\times 15=12315$ connected free double covers $\tG\to G\to T$ of generic trigonal graphs. 

\medskip
For each double cover $\tG\to G\to T$ in $\mathrm{Trig}^2_4$, we now directly verify Equation~\eqref{eq:I2Prym} by computing the left hand side as the second moment of a Jacobian, using Proposition~\ref{prop:I2Jac}, and directly evaluating the right hand side. This proves the theorem. The entire calculation and verification takes less than an hour to complete on a 2020 MacBook Air.

\section{Extension of the Prym--Torelli morphism} In this brief section, we explain why $I_2(\Prym(\tGa/\Ga))$, unlike $I_2(\Jac(\Ga))$, has a piecewise-polynomial term, and state some actual and conjectural geometric applications of the calculations of Theorem~\ref{thm:main}. We refer the reader to~\cite{2017Casalaina-MartinGrushevskyHulekLaza} for the necessary background.

We consider the following moduli spaces together with their compactifications:
\begin{itemize}
    \item $\calM_g\subset \overline{\calM}_g$, the moduli space of smooth genus $g$ curves compactified by the Deligne--Mumford moduli space of stable genus $g$ curves.
    \item $\calR_g\subset \overline{\calR}_g$, the moduli space of connected \'etale double covers of genus $g$ curves compactified by the space of admissible double covers of stable curves.
    \item $\calA_g\subset \overline{\calA}^V_g$, the moduli space of ppavs of dimension $g$ and its second Voronoi compactification. 
\end{itemize}
The assignments $C\mapsto \Jac(C)$ and $(C'\to C)\mapsto \Prym(C'/C)$ define the \emph{Torelli} and \emph{Prym--Torelli morphisms}
\[
t_g:\calM_g\to \calA_g,\quad p_g:\calR_g\to\calA_{g-1}^,
\]
and a natural question is to ask whether $t_g$ and $p_g$ extend to the boundary. 

Mumford and Namikawa~\cite{1980Namikawa} proved that $t_g$ extends to a morphism $\overline{t}_g:\overline{\calM}_g\to \overline{\calA}^V_g$, and we describe the combinatorial result that underlies this theorem. The boundary strata of $\overline{\calM}_g$ are indexed by stable weighted graphs of genus $G$:
\[
\overline{\calM}_g=\bigsqcup_{G}\calM_G.
\]
For a stable weighted graph $G$ let $M_G=\RR_{>0}^{|E(G)|}$ be the cone of metric graphs having underlying model $G$, and for each $\Gamma\in M_G$ construct the Voronoi polytope of the Jacobian $\Jac(\Ga)$. The morphism  $t_g$ extends to the stratum $\calM_G$ (the target being the second Voronoi compactification) if and only if the following property holds: the Voronoi polytopes $\Vor(\Jac(\Ga))$ are normally equivalent for all $\Ga\in M_G$. It turns out that this is indeed the case for all stable weighted graphs $G$, and that therefore $t_g$ extends over all of $\overline{\calM}_g$. 

Friedman and Smith showed~\cite{1986FriedmanSmith} that the Prym--Torelli morphism $p_g$ does not extend to the entire boundary of $\overline{\calR}_g$, and the indeterminancy locus of $\overline{p}_g:\overline{\calR}_g\dashrightarrow \overline{\calA}^V_{g-1}$ was identified in~\cite{2002AlexeevBirkenhakeHulek}. We describe the analogous combinatorial problem. The boundary strata of $\overline{\calR}_g$ are indexed by double covers of stable weighted graphs:
\[
\overline{\calR}_g=\bigsqcup_{\tG\to G}\calR_{\tG/G}.
\]
For a double cover $p:\tG\to G$, compute the Voronoi polytopes of the continuous Prym varieties $\Vor(\Prym_c(\tGa/\Ga))$ for all double covers of metric graphs $(\tGa\to \Ga)\in R_{\tG/G}$ having underlying model $p$. Then $p_g$ extends to a locus $\calR_{\tG/G}\subset \overline{\calR}_g$ if and only if all these Voronoi polytopes are normally equivalent. It turns out that this is not the case in general, and that this property fails precisely when $p:\tG\to G$ is a degeneration of an $FS_n$-double cover for some $n\geq 2$  (meaning that the latter is an edge contraction of the former). Hence the locus of indeterminancy of $\overline{p}_g:\overline{\calR}_g\dashrightarrow \overline{\calA}^V_{g-1}$ is the union of the closures of the strata $\calR_{FS_n}$ for $n\geq 2$. We note that the piecewise-polynomial term $q(\tGa/\Ga)$ in the formula for the second moment $I_2(\Prym(\tGa/\Ga))$ occurs, in genus 3 and 4, exactly on these cones. 

A natural question is to find a resolution of $\overline{p}_g:\overline{\calR}_g\dashrightarrow \overline{\calA}^V_{g-1}$ via a sequence of toric blowups. The relevant combinatorial problem is the following: for each cone $R_{\tG/G}$, where $p:\tG\to G$ is a degeneration of an $FS_n$-cover, find a decomposition into subcones such that, on each subcone, the Voronoi polytope has constant normal fan. The minimal such decomposition, which we call the \emph{Voronoi decomposition}, describes the minimal blowup. This problem has been solved up to codimension 3 in~\cite{2002AlexeevBirkenhakeHulek} and in the companion paper~\cite{2002Vologodsky}. 

We claim that the collection of cones $R_{\tG\to G\to T}\subset R_{\tG/\G}$ constructed in Theorem~\ref{thm:main} solve this problem in $g=4$ for all double covers $\tG\to G$, and hence describe a full resolution of the Prym--Torelli map $\overline{p}_4:\overline{\calR}_4\dashrightarrow \overline{\calA}^V_3$. Indeed, for any double cover $\tGa\to \Ga$ in $R_{\tG\to G\to T}$, where the target graph $G$ is equipped with a fixed trigonal structure $G\to T$, the Prym variety $\Prym(\tGa/\Ga)$ is isomorphic to a Jacobian $\Jac(\Pi)$. Varying the edge lengths inside the subcone changes the edge lengths of $\Pi$ but not the underlying graph. Therefore, the Voronoi polytope of $\Prym(\tGa/\Ga)$ has constant normal cone on the subcone $R_{\tG\to G\to T}$, because by Mumford and Namikawa's result this statement holds for the Jacobian $\Jac(\Pi)$. We note that we construct the decomposition of the cone $R_{\tG/\G}$ into subcones of the form $R_{\tG\to G\to T}$ only for maximal-dimensional cones (corresponding to points on the boundary of $\overline{\calR}_g$), but this decomposition can be extended to all cones by edge contraction.

Unfortunately, it turns out that the cone decomposition described above is, in general, much finer that the Voronoi decomposition. For example, it is nontrivial on many cones $R_{\tG/G}$ corresponding to double covers $\tG\to G$ that do not degenerate to $FS_n$ for any $n\geq 2$, for which we know the Voronoi decomposition to be trivial. In other words, the blowups determined by the cones $R_{\tG\to G\to T}$ are not minimal, and indeed blow up strata that lie outside the indeterminancy locus. We end by conjecturing how the Voronoi decomposition can be determined from our calculations.

\begin{conjecture} The Voronoi decomposition of a cone $R_{\tG/G}$ is given by the domains of polynomiality of the second moment of the tropical Prym variety $I_2(\Prym(\tGa/\Ga))$ as $\tGa\to \Ga$ varies over $R_{\tG/G}$.
 
\end{conjecture}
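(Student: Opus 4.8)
The plan is to reformulate both sides of the conjecture through the tropical period map and then compare the two decompositions wall by wall. Fix a double cover $p:\tG\to G$ of stable weighted graphs of genus $g$ and torus rank $t=t(\tG/G)$, and let
\[
\rho:R_{\tG/G}=\RR_{>0}^{E(G)}\longrightarrow\calS
\]
be the \emph{tropical period map}, sending an edge length vector $\ell$ to the Gram matrix of the continuous Prym variety $\Prym_c(\tGa/\Ga)$ on the lattice $\Im\iota_*$; here $\calS$ is the cone of positive definite symmetric bilinear forms of rank $t$, and we write $X_Q$ for the tropical ppav determined by $Q\in\calS$. Since the pairing on $\Im\iota_*$ is the restriction of the integration pairing~\eqref{eq:integrationpairing} (halved by $\zeta$), $\rho$ is a \emph{linear} map, and $\det\rho(\ell)=w_0(\tGa/\Ga)$ by Proposition~\ref{prop:I0Prym}. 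By the Mumford--Namikawa criterion recalled in Section~\ref{sec:PrymTorelli}, the normal fan of the Voronoi polytope $\Vor(X_Q)$ is locally constant on $\calS$ precisely along the relative interiors of the cones of Voronoi's second (L-type) fan $\Sigma_{\mathrm V}$ and their faces. Hence, by definition, the Voronoi decomposition of $R_{\tG/G}$ is the coarsest subdivision of $R_{\tG/G}$ refined by all the preimages $\rho^{-1}(\sigma)$, $\sigma\in\Sigma_{\mathrm V}$, and the conjecture asserts that the numerator $p(\tGa/\Ga)+q(\tGa/\Ga)=12\sqrt{w_0(\tGa/\Ga)}\,I_2(\Prym(\tGa/\Ga))$ is a single polynomial exactly on the cones of this subdivision.

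The easier inclusion is that the Voronoi decomposition refines the decomposition into domains of polynomiality, i.e.\ that $12\sqrt{w_0}\,I_2$ is a single polynomial on each cone $\rho^{-1}(\sigma)$. On a fixed L-type cone $\sigma\subset\calS$ the polytope $\Vor(X_Q)$ has constant face lattice; its vertices, dual to the maximal cells of the Delone decomposition of $Q$, are the circumcenters of those cells and hence, by Cramer's rule, ratios of polynomials in the entries of $Q$. Decomposing $\Vor(X_Q)$ into the cones from the origin over its faces, integrating $\|x\|_Q^2$ cell by cell, and using $\Vol(\Vor(X_Q))=\sqrt{\det Q}$ to clear denominators, one obtains $I_2(X_Q)=P_\sigma(Q)/\sqrt{\det Q}$ with $P_\sigma$ polynomial --- the piecewise polynomiality of the normalized second moment already implicit in~\cite{2023deJongShokrieh} and~\cite{2023RichmanShokriehWu} (and visible in the rank one case $I_2(X_{[q]})=q^{3/2}/12$). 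Pulling back along the linear map $\rho$ and using $\det\rho(\ell)=w_0(\tGa/\Ga)$ identifies $p(\tGa/\Ga)+q(\tGa/\Ga)$ with $P_\sigma\circ\rho$ on $\rho^{-1}(\sigma)$. In the free case this also follows from the trigonal construction on the finer subdivision $R_{\tG\to G\to T}$ of Theorem~\ref{thm:main}, where $\Prym_c(\tGa/\Ga)\simeq\Jac(\Pi)$ by Theorem~\ref{thm:trigonal} and $I_2(\Jac(\Pi))$ is polynomial over $\sqrt{w_0}$ by Proposition~\ref{prop:I2Jac}.

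The substantive inclusion is the converse: $12\sqrt{w_0}\,I_2$ must change polynomial form across \emph{every} wall of the Voronoi subdivision. Let $\tau\subset\calS$ be a wall separating L-type cones $\sigma_1,\sigma_2$ whose preimage $W=\rho^{-1}(\tau)$ is a genuine codimension-one wall of $R_{\tG/G}$; one must show $P_{\sigma_1}\circ\rho\ne P_{\sigma_2}\circ\rho$ near a general point of $W$. Because the period map extends continuously over edge contractions (Proposition~\ref{prop:Prymlimit}), $I_2$ is continuous, so $P_{\sigma_1}-P_{\sigma_2}$ is divisible by the linear form cutting out $\tau$; the point is that the quotient does not vanish on $\rho(W)$. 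Crossing $\tau$ is a bistellar flip of the Delone decomposition that changes only the cells of $\Vor(X_Q)$ dual to the flipped Delone cells, and I would compute the change of the moment integral on a local model of such a flip. The expectation --- confirmed in the basic rank-two flip by~\eqref{eq:p2}, where $p_2(e,f)$ is $C^2$ but not $C^3$ across $\{e=f\}$, with third transverse derivative jumping by $24$ --- is that the jump appears in a transverse derivative of bounded order with nonzero polynomial leading coefficient; one would then pass from the model flip to an arbitrary wall by the usual argument that a polynomial identity valid on a Zariski-dense set of walls is valid identically.

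The main obstacle is precisely this nonvanishing statement, which is why the assertion is only conjectural. One failure mode is harmless: if $\rho$ collapses a wall $\tau$ of $\Sigma_{\mathrm V}$, in the sense that $\rho^{-1}(\tau)$ has codimension larger than one, then $\tau$ contributes no wall to the Voronoi subdivision of $R_{\tG/G}$ and nothing is required there. The genuine danger lies in high rank, where exotic L-types with non-simplicial Delone cells could in principle give two secondary cones adjacent along a wall the same moment polynomial, producing an actual counterexample; ruling this out uniformly is the open problem. For $g\le 4$ no such pathology arises and the conjecture is in fact a theorem: by Section~\ref{sec:PrymTorelli} the walls of the Voronoi subdivisions of the cones $R_{\tG/G}$ with $g=3,4$ are exactly the $FS_n$-loci, and Theorem~\ref{thm:main} together with the explicit formulas for $q(\tGa/\Ga)$ in the Appendix exhibit $q$ --- hence $I_2$ --- as genuinely non-polynomial precisely across these loci.
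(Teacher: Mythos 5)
This statement is stated as a conjecture in the paper and is given no proof there, so there is nothing to compare your argument against; what matters is whether your proposal actually closes the question, and it does not. Your reduction is sound as far as it goes: the period map $\rho$ is indeed linear in the edge lengths, the Voronoi decomposition of $R_{\tG/G}$ is the pullback of the L-type fan, and the ``easier inclusion'' (that $\sqrt{w_0}\,I_2$ is a single polynomial on each cone of the Voronoi decomposition) is plausible --- though even there your Cramer's-rule sketch only yields a rational function a priori, and the claim that the denominators clear to leave a genuine polynomial $P_\sigma(Q)$ for an arbitrary L-type cone in arbitrary rank is itself something you would need to prove, not just assert by analogy with the Jacobian case. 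The decisive step, however, is the converse: that $P_{\sigma_1}\circ\rho\neq P_{\sigma_2}\circ\rho$ across every wall of the pullback fan. You explicitly concede this is open, offer only the rank-two flip as evidence, and the proposed ``Zariski-density of walls'' argument has no content until the local jump computation is done for a general bistellar flip. By your own account, then, the proposal is a research plan with the central lemma missing, not a proof.

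The final paragraph also overreaches. First, you conflate the indeterminacy locus of the Prym--Torelli map (the strata $\calR_{\tG/G}$ that degenerate to some $FS_n$, which determine \emph{which cones} carry a nontrivial Voronoi decomposition) with the walls \emph{inside} such a cone (loci such as $e=f$), which are what the conjecture is about. Second, to promote the conjecture to a theorem for $g\le 4$ you would need to know the actual minimal Voronoi decomposition of each cone $R_{\tG/G}$ in genus $4$ and match it wall-by-wall against the domains of polynomiality in the Appendix; the paper only exhibits a \emph{refinement} of the Voronoi decomposition (the cones $R_{\tG\to G\to T}$), explicitly notes that this refinement is strictly finer, and cites the literature for the Voronoi decomposition only up to codimension $3$. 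You would additionally have to verify that the explicit $p+q$ does not accidentally glue to a single polynomial across any candidate wall --- a finite but unperformed check. This is precisely why the authors leave the statement as a conjecture even in the range where $I_2$ is computed.
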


\label{sec:PrymTorelli}

\appendix

\section{The piecewise-polynomial term $q(\tGa/\Ga)$ for $g(\Ga)=4$}

Let $\pi:\tGa\to \Ga$ be a free double cover of a stable trivalent graph of genus $4$. The piecewise-polynomial term $q(\tGa/\Ga)$ depends on the number of $FS_2$- and $FS_3$-sets of $\Ga$, in other words, on the number of ways of contracting $\pi$ to an $FS_2$- or an $FS_3$-cover. We list the formulas for $q(\tGa/\Ga)$ in order of increasing number of $FS_2$-sets. As we shall see, these formulas are built from a number of simple blocks, and we recall the piecewise-polynomial function~\eqref{eq:p2}
\[
p_2(e,f)=\begin{cases}
    4e^2(3f-e),& e\leq f,\\
    4f^2(3e-f),& f\leq e,
\end{cases}
\]
that gives the formula for $q(\tGa/\Ga)$ in the $g(\Ga)=3$ case.

\subsubsection*{No $FS_2$-sets, no $FS_3$-sets} In this case we set
\[
q(\tGa/\Ga)=0.
\]

\subsubsection*{No $FS_2$-sets, one $FS_3$-set} Let $\{e,f,g\}\subset E(\Ga)$ be the unique $FS_3$-set. There is a unique underlying graph $\Ga$ with this property:
\begin{center}
\begin{tikzpicture}
    \draw[thin] (0,0) -- (3,0) -- (3,2) -- (0,2) -- (0,0);
    \draw[thin] (0,0) -- (1,1) -- (2,1) -- (3,0);
    \draw[thin] (0,2) -- (1,1);
    \draw[thin] (2,1) -- (3,2);
    \draw[fill] (0,0) circle(0.07);
    \draw[fill] (3,0) circle(0.07);
    \draw[fill] (3,2) circle(0.07);
    \draw[fill] (0,2) circle(0.07);
    \draw[fill] (1,1) circle(0.07);
    \draw[fill] (2,1) circle(0.07);
    \node[above] at (1.5,2) {$e$};
    \node[above] at (1.5,1) {$f$};
    \node[above] at (1.5,0) {$g$};
\end{tikzpicture}
\end{center}
There are a number of possible signed graph structures on $\Ga$, but each of the two triangles must be unbalanced. The term $q(\tGa/\Ga)$ is given by the following piecewise-polynomial function:
\begin{equation}
    q(\tGa/\Ga)=p_3(e,f,g)=\begin{cases}
        2e^2(e^2-2ef-2eg+6fg), & e\leq \min(f,g), \\
        2f^2(f^2-2ef-2fg+6eg), & f\leq \min(e,g), \\
        2g^2(g^2-2eg-2fg+6ef), & g\leq \min(e,f).
    \end{cases}
    \label{eq:p3}
\end{equation}
The cone of edge lengths of $\Ga$ splits into three domains of polynomiality, according to which of the three edges $e,f,g$ is the shortest.

\subsubsection*{One $FS_2$-set, no $FS_3$-sets} Let $\{e,f\}\subset E(\Ga)$ be the unique $FS_2$-set. Then $\Ga\backslash\{e,f\}=\Ga_1\sqcup \Ga_2$, where the graphs $\Ga_1$ and $\Ga_2$ are unbalanced, and we assume without loss of generality that $g(\Ga_1)=2$ and $g(\Ga_2)=1$:
\begin{center}
\begin{tikzpicture}
    \draw[thin] (0,0) .. controls (1,0.5) and (2,0.5) .. (3,0);
    \draw[thin] (0,0) .. controls (1,-0.5) and (2,-0.5) .. (3,0);
    \node at (1.5,0.6) {$e$};
    \node at (1.5,-0.6) {$f$};
    \draw[fill,white] (0,0) circle(0.5);
    \draw[fill,white] (3,0) circle(0.5);
    \draw[thin] (0,0) circle(0.5);
    \draw[thin] (3,0) circle(0.5);
    \node at (0,0) {$\Ga_1$};
    \node at (3,0) {$\Ga_2$};
\end{tikzpicture}
\end{center}
Then
\[
q(\tGa/\Ga)=p_2(e,f)w_0(\tGa_1/\Ga_1),
\]
where $\tGa_1\to \Ga_1$ is the restriction of the double cover to $\Ga_1$ and $w_0(\tGa_1/\Ga_1)$ is the zeroth moment of the Prym variety of the double cover $\tGa_1\to \Ga_1$ (see Eq.~\eqref{eq:zerothweightPrym}). The domains of polynomiality are $e\leq f$ and $f\leq e$.

\subsubsection*{One $FS_2$-set, one $FS_3$-set} The two sets always have exactly one edge in common, so we label them $\{e,f\}$ and $\{e,g,h\}$. The graph $\Ga$ has the following form:
\begin{center}
\begin{tikzpicture}
    \draw[thin] (0,0) -- (4,0);
    \draw[thin] (2,0) .. controls (2.7,0.5) and (3.3,0.5) .. (4,0);
    \draw[thin] (0,0) .. controls (1,-1) and (3,-1) .. (4,0);
    \node at (1.2,0.3) {$f$};
    \node at (3,0.6) {$g$};
    \node at (3,-0.2) {$h$};
    \node at (2,-1) {$e$};
    \draw[fill] (2,0) circle(0.07);
    \draw[fill,white] (0,0) circle(0.5);
    \draw[thin] (0,0) circle(0.5);
    \node at (0,0) {$\Ga_1$};
    \draw[fill,white] (4,0) circle(0.5);
    \draw[thin] (4,0) circle(0.5);
    \node at (4,0) {$\Ga_2$};    
\end{tikzpicture}
\end{center}
The subgraphs $\Ga_1$ and $\Ga_2$ are unbalanced graphs of genus one. There are four domains of polynomiality. If $e\leq f$, then
\[
q(\tGa/\Ga)=p_2(e,f)w_0(\tGa'_2/\Ga'_2)+12e^2gh,
\]
where $\tGa'_2\to\Ga'_2$ is the double cover over the graph $\Ga'_2=\Ga_2\cup\{g,h\}$.  If $f\leq e$, then
\[
q(\tGa/\Ga)=p_3(e-f,g,h)+p_2(f,e)w_0(\tGa_2'/\Ga_2')+12fgh(2e-f),
\]
and there are three domains of polynomiality, depending on which of the three quantities $e-f$, $g$, and $h$ is the smallest.

\subsubsection*{One $FS_2$-sets, two $FS_3$-sets} The $FS_3$-sets are always disjoint, and each contains exactly one element from the $FS_2$-set, so we can denote the sets by $\{e_1,e_2\}$, $\{e_1,f_1,g_1\}$, and $\{e_1,f_1,g_1\}$. The graph $\Gamma$ has the following form:
\begin{center}
\begin{tikzpicture}
    \draw[thin] (0,0) -- (3,1.7) -- (2,0) -- (3,-1.7) -- (0,0);
    \draw[thin] (3,1.7) -- (4,0) -- (3,-1.7);
    \draw[thin] (2,0) -- (4,0);
    \draw[fill] (3,1.7) circle(0.07);
    \draw[fill] (2,0) circle(0.07);
    \draw[fill] (3,-1.7) circle(0.07);
    \draw[fill] (4,0) circle(0.07);
    \draw[fill,white] (0,0) circle(0.5);
    \draw[thin] (0,0) circle(0.5);
    \node at (0,0) {$\Ga_1$};
    \node at (1.5,-1.2) {$e_2$};
    \node at (1.5,1.2) {$e_1$};
    \node at (2.8,0.8) {$f_2$};
    \node at (2.8,-0.8) {$f_1$};
    \node at (3.8,1) {$g_2$};
    \node at (3.8,-1) {$g_1$};
\end{tikzpicture}
\end{center}
The subgraph $\Ga_1$ is an unbalanced graph of genus one, and both triangles on the right are unbalanced. We denote by $\tGa'\to \Ga'$ the double cover over the subgraph $\Ga'\subset \Ga$ consisting of the two triangles. There are six domains of polynomiality. In the cone $e_1\leq e_2$ we have
\[
f_{pp}(\tGa/\Ga)=p_3(e_2-e_1,f_2,g_2)+p_2(e_1,e_2)w_0(\tGa'/\Ga')-12e_1^2f_2g_2+24e_1e_2f_2g_2+12e_1^2f_1g_1,
\]
and the domains of polynomiality are determined by which of $e_2-e_1$, $f_2$, and $g_2$ is the smallest. Exchanging $1$ and $2$ everywhere, we obtain the formula for $p_2(\tGa/\Ga)$ in the cone $e_2\leq e_1$.

\subsubsection*{Two $FS_2$-sets, no $FS_3$-sets} There are two subcases consider. The $FS_2$-sets $\{e_1,f_1\}$ and $\{e_2,f_2\}$ may be disjoint, in which case $\Ga$ has the following form:
\begin{center}
\begin{tikzpicture}
    \draw[thin] (0,0) .. controls (0.7,0.5) and (1.3,0.5) .. (2,0);
    \draw[thin] (0,0) .. controls (0.7,-0.5) and (1.3,-0.5) .. (2,0);
    \node at (1,0.6) {$e_1$};
    \node at (1,-0.6) {$f_1$};
    \draw[fill,white] (0,0) circle(0.5);
    \draw[thin] (0,0) circle(0.5);
    \node at (0,0) {$\Ga_1$};
    \draw[fill] (2,0) circle(0.07);
    \draw[thin] (2,0) -- (3,0);
    \draw[fill] (3,0) circle(0.07);
    \draw[thin] (3,0) .. controls (3.7,0.5) and (4.3,0.5) .. (5,0);
    \draw[thin] (3,0) .. controls (3.7,-0.5) and (4.3,-0.5) .. (5,0);
    \node at (4,0.6) {$e_2$};
    \node at (4,-0.6) {$f_2$};
    \draw[fill,white] (5,0) circle(0.5);
    \draw[thin] (5,0) circle(0.5);
    \node at (5,0) {$\Ga_2$};
\end{tikzpicture}
\end{center}
Here the subgraphs $\Ga_1$ and $\Ga_2$ are unbalanced graphs of genus one. Denote by $\tGa'_1\to \Ga'_1$ and $\tGa'_2\to \Ga'_2$ the double covers over the genus two subgraphs $\Ga'_1=\Ga_1\cup\{e_1,f_1\}$ and $\Ga'_2=\Ga_2\cup\{e_2,f_2\}$. Then
\[
q(\tGa/\Ga)=p_2(e_1,f_1)w_0(\tGa'_2/\Ga'_2)+p_2(e_2,f_2)w_0(\tGa'_1/\Ga'_1)+24e_1e_2f_1f_2,
\]
and there are four domains of polynomiality.

Alternatively, the $FS_2$-sets $\{e,f\}$ and $\{e,g\}$ may have a common edge, in which case $\Ga$ has the following form:
\begin{center}
\begin{tikzpicture}
    \draw[thin] (0,1) -- (0,-1) -- (1.7,0) -- (0,1);
    \draw[fill,white] (0,1) circle(0.5);
    \draw[thin] (0,1) circle(0.5);
    \node at (0,1) {$\Ga_1$};
    \draw[fill,white] (0,-1) circle(0.5);
    \draw[thin] (0,-1) circle(0.5);
    \node at (0,-1) {$\Ga_2$};
    \node[left] at (0,0) {$e$};
    \node at (1,0.7) {$f$};
    \node at (1,-0.7) {$g$};
    \draw[fill] (1.7,0) circle(0.07);
    \draw[thin] (1.7,0) -- (2.7,0);
    \draw[fill] (2.7,0) circle(0.07);
    \draw[thin] (3,0) circle(0.3);
    \node[right] at (3.3,0) {$h$};
\end{tikzpicture}
\end{center}
The subgraphs $\Ga_1$ and $\Ga_2$ are unbalanced graphs of genus one and $h$ is an even loop. We have
\[
q(\tGa/\Ga)=p_2(e,f+g)h,
\]
and there are two domains of polynomiality. We note that $h=w_0(\tGa'/\Ga')$, where $\tGa'\to \Ga'$ is the double cover over the graph $\Ga'$ obtained from $\Ga$ by removing either $\Ga_1$ or $\Ga_2$.

\subsubsection*{Two $FS_2$-sets, one $FS_3$-set} The three sets have a unique common edge and are otherwise disjoint, so we can label them $\{e,f_1\}$, $\{e,f_2\}$, and $\{e,g,h\}$. The graph $\Ga$ has the following form:
\begin{center}
\begin{tikzpicture}
    \draw[thin] (0,0) -- (2,0);
    \draw[thin] (2,0) .. controls (2.7,0.5) and (3.3,0.5) .. (4,0);
    \draw[thin] (2,0) .. controls (2.7,-0.5) and (3.3,-0.5) .. (4,0);
    \draw[thin] (4,0) -- (6,0);
    \draw[thin] (0,0) .. controls (0.7,-1) and (5.3,-1) .. (6,0);
    \draw[fill] (2,0) circle(0.07);
    \draw[fill] (4,0) circle(0.07);
    \node at (1.2,0.3) {$f_1$};
    \node at (3,0.6) {$g$};
    \node at (3,-0.1) {$h$};
    \node at (4.8,0.3) {$f_2$};
    \node at (3,-1) {$e$};
    \draw[fill,white] (0,0) circle(0.5);
    \draw[thin] (0,0) circle(0.5);
    \node at (0,0) {$\Ga_1$};
    \draw[fill,white] (6,0) circle(0.5);
    \draw[thin] (6,0) circle(0.5);
    \node at (6,0) {$\Ga_2$};    

\end{tikzpicture}
\end{center}
The subgraphs $\Ga_1$ and $\Ga_2$ are unbalanced graphs of genus one, and the cycle formed by $g$ and $h$ is balanced. Denote by $\tGa'_1\to\Ga'_1$ and $\tGa'_2\to\Ga'_2$ the double covers over the graphs $\Ga'_1=\Ga_1\cup\{f_1,g,h\}$ and $\Ga'_2=\Ga_2\cup\{f_2,g,h\}$. There are four domains of polynomiality. If $e\leq f_1+f_2$, then
\[
q(\tGa/\Ga)=p_2(e,f_1+f_2)(g+h)+12e^2gh,
\]
where we note that $g+h=w_0(\tGa'/\Ga')$, where $\tGa'\to \Ga'$ is the double cover over the graph $\Ga'$ obtained from $\Ga$ by removing either $\Ga_1$ or $\Ga_2$. For $f_1+f_2\leq e$ we instead have 
\[
q(\tGa/\Ga)=p_3(e-f_1-f_2,g,h)+
p_2(f_1,e)w_0(\tGa'_2/\Ga'_2)+p_2(f_2,e)w_0(\tGa'_1/\Ga'_1)+
\]
\[
+12(2e-f_1-f_2)\left[(f_1+f_2)gh+f_1f_2(g+h)\right].
\]

\subsubsection*{Two $FS_2$-sets, two $FS_3$-sets} The $FS_2$-sets are disjoint, while the remaining five pairwise intersections are distinct edges, therefore we can label the sets as $\{e_1,f_1\}$, $\{e_2,f_2\}$, $\{e_1,f_2,g\}$, and $\{e_2,f_1,g\}$. The graph $\Ga$ has the following form:
\begin{center}
\begin{tikzpicture}
    \draw[thin] (0,0) -- (2,1) -- (4,0);
    \draw[thin] (0,0) -- (2,-1) -- (4,0);
    \draw[thin] (2,1) -- (2,-1);
    \draw[fill,white] (0,0) circle(0.5);
    \draw[thin] (0,0) circle(0.5);
    \node at (0,0) {$\Ga_1$};
    \draw[fill,white] (4,0) circle(0.5);
    \draw[thin] (4,0) circle(0.5);
    \node at (4,0) {$\Ga_2$};
    \node[above] at (1,0.5) {$e_1$};
    \node[below] at (1,-0.5) {$f_1$};
    \node[above] at (3,0.5) {$e_2$};
    \node[below] at (3,-0.5) {$f_2$};
    \node[right] at (2,0) {$g$};
    \draw[fill] (2,1) circle(0.07);
    \draw[fill] (2,-1) circle(0.07);
\end{tikzpicture}
    \end{center}
The subgraphs $\Ga_1$ and $\Ga_2$ are unbalanced graphs of genus one. We denote by $\tGa'_1\to \Ga'_1$ and $\tGa'_2\to \Ga'_2$ the double covers over the subgraphs $\Ga'_1=\Ga_1\cup\{e_1,f_1,g\}$ and $\Ga'_2=\Ga_2\cup\{e_2,f_2,g\}$.

There are eight domains of polynomiality. On the cone $e_1\geq f_1$, $e_2\geq f_2$ we set
\[
q(\tGa/\Ga)=p_2(e_1,f_1)w_0(\tGa'_2/\Ga'_2)+p_2(e_2,f_2)w_0(\tGa'_1/\Ga'_1)+
\]
\[
+12g(e_1f_2^2+e_2f_1^2+2(e_1+e_2)f_1f_2-(f_1+f_2)f_1f_2)+24e_1e_2f_1f_2,
\]
while on the cone $f_1\geq e_1$, $f_2\geq e_2$ we exchange the $e$'s and the $f$'s. On the cone $e_1\geq f_1$, $f_2\geq e_2$ we set
\[
q(\tGa/\Ga)=p_2(e_1,f_1)w_0(\tGa'_2/\Ga'_2)+p_2(e_2,f_2)w_0(\tGa'_1/\Ga'_1)+p_3(e_1-f_1,f_2-e_2,g)+
\]
\[
+12g(-e_1e_2^2-f_1^2f_2+e_2^2f_1+e_2f_1^2+2e_1e_2f_2+2e_1f_1f_2)+24e_1e_2f_1f_2,
\]
and there are three domains of polynomiality, depending on which of the three quantities $e_1-f_1$, $f_2-e_2$, and $g$ is the smallest. Finally, on the cone $e_2\geq f_2$, $f_1\geq e_1$, the formula for $q(\tGa/\Ga)$ is obtained from the one above by exchanging the $e$'s and the $f$'s.

\subsubsection*{Three $FS_3$-sets} The union of the $FS_2$-sets consists of three edges, therefore we denote the three sets by $\{e_1,e_2\}$, $\{e_1,e_3\}$, and $\{e_2,e_3\}$. The graph $\Ga$ has the following form:
\begin{center}
\begin{tikzpicture}
    \draw[thin] (-1,0) -- (1,0) -- (0,-1.7) -- (-1,0);
    \node[above] at (0,0) {$e_1$};
    \node at (0.7,-1) {$e_3$};
    \node at (-0.7,-1) {$e_2$};
    \draw[fill,white] (-1,0) circle(0.5);
    \draw[thin] (-1,0) circle(0.5);
    \node at (-1,0) {$\Ga_3$};
    \draw[fill,white] (1,0) circle(0.5);
    \draw[thin] (1,0) circle(0.5);
    \node at (1,0) {$\Ga_2$};
    \draw[fill,white] (0,-1.7) circle(0.5);
    \draw[thin] (0,-1.7) circle(0.5);
    \node at (0,-1.7) {$\Ga_1$};
\end{tikzpicture}
\end{center}
The three subgraphs $\Ga_1$, $\Ga_2$, and $\Ga_3$ are unbalanced. We denote by $\Ga'_1$, $\Ga'_2$, and $\Ga'_3$ the genus two subgraphs obtained by removing respectively $\{e_2,e_3\}$, $\{e_1,e_3\}$, and $\{e_1,e_2\}$ from $\Ga$, and $\tGa'_i\to \Ga'_i$ the associated double covers. Let 
\[
s_1=e_1+e_2+e_3,\quad s_2=e_1e_2+e_1e_3+e_2e_3,\quad s_3=e_1e_2e_3
\]
denote the elementary symmetric functions in the $e_i$, and we introduce the quantity
\[
S(e_1,e_2,e_3)=2s_1^4-16s_1^2s_2+32s_2^2+16s_1s_3.
\]
There are two possible choices of graphs for each $\Ga_i$, giving four (up to isomorphism) possible graphs $\Ga$, having 0, 1, 2, or 3 $FS_3$-sets.

\subsubsection*{Three $FS_2$-sets, no $FS_3$-set} The graph $\Ga$ has the form
\begin{center}
\begin{tikzpicture}
    \draw[thin] (-1,0) -- (1,0) -- (0,-1.7) -- (-1,0);
    \node[above] at (0,0) {$e_1$};
    \node at (0.7,-1) {$e_3$};
    \node at (-0.7,-1) {$e_2$};
    \draw[fill] (-1,0) circle(0.07);
    \draw[fill] (1,0) circle(0.07);
    \draw[fill] (0,-1.7) circle(0.07);
    \draw[thin] (-1,0) -- (-2,0);
    \draw[thin] (1,0) -- (2,0);
    \draw[thin] (0,-1.7) -- (0,-2.7);
    \node[above] at (-1.5,0) {$f_3$};
    \node[above] at (1.5,0) {$f_2$};
    \node[right] at (0,-2.2) {$f_1$};
    \draw[fill] (-2,0) circle(0.07);
    \draw[fill] (2,0) circle(0.07);
    \draw[fill] (0,-2.7) circle(0.07);
    \draw[thin] (-2.3,0) circle(0.3);
    \draw[thin] (2.3,0) circle(0.3);
    \draw[thin] (0,-3) circle(0.3);
    \node[left] at (-2.6,0) {$g_3$};
    \node[right] at (2.6,0) {$g_2$};
    \node[right] at (0.3,-3) {$g_1$};
\end{tikzpicture}
\end{center}
The three loops $g_1,g_2,g_3$ are odd, while the remaining edges are even. There are four domains of polynomiality. On the cone $e_1\geq e_2+e_3$ we set 
\[
q(\tGa/\Ga)=p_2(e_2,e_1)w_0(\tGa'_3/\Ga'_3)+p_2(e_3,e_1)w_0(\tGa'_2/\Ga'_2)+
\]
\[
+12e_2e_3\left[(2e_1-e_2-e_3)(4f_1+g_1)+e_2(4f_2+g_2)+e_3(4f_3+g_3)\right].
\]
Cyclically permuting the indices, we obtain the formula for $q(\tGa/\Ga)$ on the cones $e_2\geq e_1+e_3$ and $e_3\geq e_1+e_2$. On the remaining central cone $e_1\leq e_2+e_3$, $e_2\leq e_1+e_3$, $e_3\leq e_1+e_2$ we set
\[
q(\tGa/\Ga)=S(e_1,e_2,e_3)+\sum_{i=1}^3p_2(e_i,s_1-e_i)(4f_i+g_i).
\]

\subsubsection*{Three $FS_2$-sets, one $FS_3$-set}  The graph $\Ga$ has the form
\begin{center}
    \begin{tikzpicture}
        \draw[thin] (0,0) -- (0,2) -- (2,2) -- (2,0);
        \draw[thin] (-1,2) -- (0,2);
        \draw[thin] (2,2) -- (3,2);
        \draw[thin] (-1.3,2) circle(0.3);
        \draw[thin] (3.3,2) circle(0.3);
        \draw[fill] (-1,2) circle(0.07);
        \draw[fill] (0,2) circle(0.07);
        \draw[fill] (2,2) circle(0.07);
        \draw[fill] (3,2) circle(0.07);
        \draw[fill] (0,0) circle(0.07);
        \draw[fill] (2,0) circle(0.07);
        \draw[thin] (0,0) .. controls (0.4,0.2) and (1.6,0.2) .. (2,0);
        \draw[thin] (0,0) .. controls (0.4,-0.2) and (1.6,-0.2) .. (2,0);
        \node[above] at (1,2) {$e_1$};
        \node[above] at (1,0.1) {$f_1$};
        \node[below] at (1,-0.1) {$g_1$};
        \node[left] at (0,1) {$e_2$};
        \node[right] at (2,1) {$e_3$};
        \node[above] at (-0.5,2) {$f_3$};
        \node[above] at (2.5,2) {$f_2$};
        \node[left] at (-1.6,2) {$g_3$};
        \node[right] at (3.6,2) {$g_2$};
    \end{tikzpicture}
\end{center}
The loops $g_2$ and $g_3$ and the cycle $f_1,g_1$ are odd, and the $FS_3$-set is $\{e_1,f_1,g_1\}$. There are six domains of polynomiality. On the cone $e_1\geq e_2+e_3$ we set
\[
q(\tGa/\Ga)=p_2(e_1,e_2)w_0(\tGa'_3/\Ga'_3)+p_2(e_3,e_1)w_0(\tGa'_2/\Ga'_2)+p_3(e_1-e_2-e_3,f_1,g_1)+
\]
\[
+12\left[
(2e_1-e_2-e_3)((e_2+e_3)f_1g_1+e_2e_3(f_1+g_1))+e_3^2e_2(4f_3+g_3)+e_3e_2^2(4f_2+g_2)
\right],
\]
and there are three domains of polynomiality, depending on which of the quantities $e_1-e_2-e_3$, $f_1$, and $g_1$ is the smallest. On the cone $e_2\geq e_1+e_3$ we set
\[
q(\tGa/\Ga)=p_2(e_1,e_2)w_0(\tGa'_2/\Ga'_2)+p_2(e_2,e_3)w_0(\tGa'_1/\Ga'_1)+
\]
\[
+12\left[e_1e_3(e_1(f_1+g_1)+(2e_2-e_1-e_3)(4f_2+g_2)-e_3(4f_3+g_3))+e_1^2f_1g_1\right].
\]
Exchanging 2 and 3, we obtain the formula for $q(\tGa/\Ga)$ on the cone $e_3\geq e_1+e_2$. Finally, on the central cone $e_1\leq e_2+e_3$, $e_2\leq e_1+e_3$, $e_3\leq e_1+e_2$ we set
\[
q(\tGa/\Ga)=S(e_1,e_2,e_3)+p_2(e_1,e_2+e_3)(f_1+g_1)+12e_1^2f_1g_1+\sum_{i=2}^3 p_2(e_i,s_1-e_i)(4f_i+g_i).
\]

\subsubsection*{Three $FS_2$-sets, two $FS_3$-sets} The graph $\Ga$ has the form
\begin{center}
    \begin{tikzpicture}
        \draw[thin] (0,0) -- (0,2);
        \draw[thin] (2,2) -- (3,1) -- (2,0);
        \draw[thin] (3,1) -- (4,1);
        \draw[thin] (0,0) .. controls (0.4,0.2) and (1.6,0.2) .. (2,0);
        \draw[thin] (0,0) .. controls (0.4,-0.2) and (1.6,-0.2) .. (2,0);
        \draw[thin] (0,2) .. controls (0.4,2.2) and (1.6,2.2) .. (2,2);
        \draw[thin] (0,2) .. controls (0.4,1.8) and (1.6,1.8) .. (2,2);
        \draw[fill] (0,0) circle(0.07);
        \draw[fill] (2,0) circle(0.07);
        \draw[fill] (0,2) circle(0.07);
        \draw[fill] (2,2) circle(0.07);
        \draw[fill] (3,1) circle(0.07);
        \draw[fill] (4,1) circle(0.07);
        \draw[thin] (4.3,1) circle(0.3);
        \node[right] at (2.4,1.6) {$e_1$};
        \node[above] at (1,0.1) {$f_1$};
        \node[below] at (1,-0.1) {$g_1$};
        \node[right] at (2.4,0.4) {$e_2$};
        \node[above] at (1,2.1) {$f_2$};
        \node[below] at (1,1.9) {$g_2$};
        \node[left] at (0,1) {$e_3$};
        \node[above] at (3.5,1) {$f_3$};
        \node[right] at (4.6,1) {$g_3$};
    
    \end{tikzpicture}
\end{center}
The loop $g_3$ and the cycles $f_1,g_1$ and $f_2,g_2$ are odd, and the $FS_3$-sets are $\{e_1,f_1,g_1\}$ and $\{e_2,f_2,g_2\}$. There are eight domains of polynomiality. On the cone $e_1\geq e_2+e_3$ we set
\[
q(\tGa/\Ga)=p_2(e_1,e_2)w_0(\tGa'_3/\Ga'_3)+p_2(e_3,e_1)w_0(\tGa'_2/\Ga'_2)+p_3(e_1-e_2-e_3,f_1,g_1)+
\]
\[
+12\left[
(2e_1-e_2-e_3)((e_2+e_3)f_1g_1+e_2e_3(f_1+g_1))+e_3^2e_2(4f_3+g_3)+e_3e_2^2(f_2+g_2)+e_2^2f_2g_2
\right],
\]
with three domains of polynomiality determined by which of the quantities $e_1-e_2-e_3$, $f_1$, and $g_1$ is the smallest. On the cone $e_2\geq e_1+e_3$ we exchange 1 and 2. On the cone $e_3\geq e_1+e_2$ we set
\[
q(\tGa/\Ga)=p_2(e_1,e_3)w_0(\tGa'_3/\Ga'_3)+p_2(e_2,e_3)w_0(\tGa'_1/\Ga'_1)+
\]
\[
+12\left[e_1e_2(e_1(f_1+g_1)+e_2(f_2+g_2)+(2e_3-e_1-e_2)(4f_3+g_3))+e_1^2f_1g_1+e_2^2f_2g_2
\right].
\]
Finally, on the central cone $e_1\leq e_2+e_3$, $e_2\leq e_1+e_3$, $e_3\leq e_1+e_2$ we set
\[
q(\tGa/\Ga)=S(e_1,e_2,e_3)+\sum_{i=1}^2[p_2(e_i,s-e_i)(f_i+g_i)+12e_i^2f_ig_i]+p_2(e_3,e_1+e_2)(4f_3+g_3).
\]

\subsubsection*{Three $FS_2$-sets, three $FS_3$-sets} The graph $\Ga$ has the form
\begin{center}
    \begin{tikzpicture}
        \draw[thin] (0,0) -- (2,0);
        \draw[thin] (3,1.7) -- (2,3.4);
        \draw[thin] (0,3.4) -- (-1,1.7);
        \draw[thin] (0,3.4) .. controls (0.4,3.6) and (1.6,3.6) .. (2,3.4);
        \draw[thin] (0,3.4) .. controls (0.4,3.2) and (1.6,3.2) .. (2,3.4);
        \draw[thin] (2,0) .. controls (2,0.5) and (2.6,1.5) .. (3,1.7);
        \draw[thin] (2,0) .. controls (2.4,0.2) and (3,1.2) .. (3,1.7);
        \draw[thin] (0,0) .. controls (0,0.5) and (-0.6,1.5) .. (-1,1.7);
        \draw[thin] (0,0) .. controls (-0.4,0.2) and (-1,1.2) .. (-1,1.7);
        \draw[fill] (0,0) circle(0.07);
        \draw[fill] (2,0) circle(0.07);
        \draw[fill] (0,3.4) circle(0.07);
        \draw[fill] (2,3.4) circle(0.07);
        \draw[fill] (3,1.7) circle(0.07);
        \draw[fill] (-1,1.7) circle(0.07);
        \node[below] at (1,0) {$e_1$};
        \node[above] at (1,3.6) {$f_1$};
        \node[below] at (1,3.2) {$g_1$};
        \node[left] at (-0.4, 2.7) {$e_2$};
        \node[left] at (2.4, 1) {$f_2$};
        \node[right] at (2.6, 0.7) {$g_2$};
        \node[right] at (2.4, 2.7) {$e_3$};
        \node[left] at (-0.6, 0.7) {$f_3$};
        \node[right] at (-0.4, 1) {$g_3$};
    \end{tikzpicture}
\end{center}
The cycles $f_1,g_1$, $f_2,g_2$, and $f_3,g_3$ are odd, and the $FS_3$-sets are $\{e_1,f_1,g_1\}$, $\{e_2,f_2,g_2\}$, and $\{e_3,f_3,g_3\}$. There are ten domains of polynomiality. On the cone $e_3\geq e_1+e_2$ we set
\[
q(\tGa/\Ga)=p_2(e_1,e_3)w_0(\tGa'_2/\Ga'_2)+p_2(e_2,e_3)w_0(\tGa'_1/\Ga'_1)+p_3(e_3-e_1-e_2,f_3,g_3)+
\]
\[
+12\left[(f_3g_3(e_1+e_2)+(f_3+g_3)e_1e_2)(2e_3-e_1-e_2)
+e_1^2f_1g_1+e_2^2g_2f_2+e_1^2e_2(f_1+g_1)+e_1e_2^2(f_2+g_2)\right].
\]
There are three domains of polynomiality on this cone, corresponding to which of the quantities $e_3-e_1-e_2$, $f_3$, and $g_3$ is the smallest. The formula for $q(\tGa/\Ga)$ on the cones $e_1\geq e_2+e_3$ and $e_2\geq e_1+e_3$ is obtained by cyclically permuting the indices. Finally, on the central cone $e_1\leq e_2+e_3$, $e_2\leq e_1+e_3$, $e_3\leq e_1+e_2$ we set 
\[
q(\tGa/\Ga)=S(e_1,e_2,e_3)+\sum_{i=1}^3\left[p_2(e_i,s_1-e_i)(f_i+g_i)+12e_i^2f_ig_i\right].
\]

\bibliographystyle{amsalpha}
\bibliography{references}{}

\end{document}